\documentclass[a4paper,12pt]{amsart}

\usepackage[utf8]{inputenc}
\usepackage{amsfonts,amssymb}
\usepackage{amsmath}
\usepackage{graphicx}

\usepackage{comment}
\usepackage[usenames,dvipsnames]{color}

\usepackage{a4wide}

\usepackage{empheq}
\numberwithin{equation}{section}

\usepackage{amsthm}
\theoremstyle{plain}
\newtheorem*{mainresult}{Theorem A}
\newtheorem{theorem}{Theorem}[section]
\newtheorem{lemma}[theorem]{Lemma}
\newtheorem{corollary}[theorem]{Corollary}

\newtheorem{proposition}[theorem]{Proposition}
\newtheorem{observation}[theorem]{Observation}

\newtheorem*{lemma*}{Lemma}

\theoremstyle{definition}
\newtheorem{definition}[theorem]{Definition}
\newtheorem{remark}[theorem]{Remark}

\newcommand{\R}{\mathbb{R}}
\newcommand{\N}{\mathbb{N}}
\newcommand{\Z}{\mathbb{Z}}

\newcommand{\calP}{\mathcal{P}}

\newcommand{\defeq}{\mathrel{\mathop{:}}=}

\newcommand{\abs}[1]{\lvert #1 \rvert}
\newcommand{\gen}[1]{\langle #1 \rangle}
\newcommand{\floor}[1]{\left\lfloor #1 \right\rfloor}
\newcommand{\ceil}[1]{\left\lceil #1 \right\rceil}
\newcommand{\lk}{\operatorname{lk}}
\newcommand{\st}{\operatorname{st}}
\newcommand{\Hom}{\operatorname{Hom}}
\newcommand{\CAT}{\operatorname{CAT}}
\newcommand{\F}{\operatorname{F}}

\newcommand{\id}{\operatorname{id}}

\newcommand{\match}{\mathcal{M}}
\newcommand{\gmatch}{\mathcal{GM}}
\newcommand{\nerve}{\mathcal{N}}

\newcommand{\elsplit}{\Lambda}
\newcommand{\elmerge}{\mathrm{V}}
\newcommand{\elnothing}{\mathrm{I}}

\newcommand{\into}{\hookrightarrow}


\numberwithin{equation}{section}

%
%

\setlength{\parskip}{5pt}
\setlength{\parindent}{0pt}
\begin{document}

\title[$\Sigma$-invariants of $F$]{The $\Sigma$-invariants of Thompson's group $F$\\via Morse Theory}
\dedicatory{Dedicated to Ross Geoghegan on the occasion of his 70th birthday}
\date{\today}
\subjclass[2010]{Primary 20F65;   
                Secondary 57M07} 

\keywords{Thompson group, BNSR-invariant, finiteness properties, $\CAT(0)$ cube complex}

\author[S.~Witzel]{Stefan Witzel}
\address{Department of Mathematics, Bielefeld University, PO Box 100131, 33501 Bielefeld, Germany}
\email{switzel@math.uni-bielefeld.de}

\author[M.~C.~B.~Zaremsky]{Matthew C.~B.~Zaremsky}
\address{Department of Mathematical Sciences, Binghamton University, 
Binghamton, NY 13902}
\email{zaremsky@math.binghamton.edu}

\begin{abstract}
 Bieri, Geoghegan and Kochloukova computed the BNSR-invariants $\Sigma^m(F)$ of Thompson's group $F$ for all $m$. We recompute these using entirely geometric techniques, making use of the Stein--Farley $\CAT(0)$ cube complex $X$ on which $F$ acts.
\end{abstract}

\maketitle
\thispagestyle{empty}


\section*{Introduction}

In \cite{bieri10}, Bieri, Geoghegan and Kochloukova computed $\Sigma^m(F)$ for all $m$. Here $F$ is Thompson's group, and $\Sigma^m$ is the $m$th \emph{Bieri--Neumann--Strebel--Renz (BNSR) invariant}. This is a topological invariant of a group of type $\F_m$ \cite{bieri87,bieri88}. The proof in \cite{bieri10} makes use of various algebraic facts about $F$, for instance that it contains no non-abelian free subgroups, and that it is isomorphic to an ascending HNN-extension of an isomorphic copy of itself, and applies tools specific to such groups to compute all the $\Sigma^m(F)$.

In this note we consider the free action of $F$ by isometries on a proper $\CAT(0)$ cube complex $X$, the \emph{Stein--Farley complex}. We apply a version of Bestvina--Brady Morse theory to this space, and recompute the invariants $\Sigma^m(F)$. The crucial work to do, using this approach, is to analyze the homotopy type of \emph{ascending links} of vertices in $X$ and \emph{superlevel sets} in $X$ with respect to various \emph{character height functions}.

The abelianization of Thompson's group $F$ is free abelian of rank $2$. A basis of $\Hom(F,\R) \cong \R^2$ is given by two homomorphisms usually denoted $\chi_0$ and $\chi_1$. The main result of \cite{bieri10} is:

\begin{mainresult}\label{thrm:main}
 Write a non-trivial character $\chi \colon F \to \R$ as $\chi = a \chi_0 + b\chi_1 $ (with $(a,b) \ne (0,0)$). Then $[\chi]$ is in $\Sigma^1(F)$ unless $a > 0$ and $b=0$, or $b>0$ and $a=0$. Moreover, $[\chi]$ is in $\Sigma^2(F) = \Sigma^\infty(F)$ unless $a \ge 0$ and $b \ge 0$.
\end{mainresult}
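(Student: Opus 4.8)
The plan is to run Bestvina--Brady-style discrete Morse theory on the free action of $F$ on the Stein--Farley $\CAT(0)$ cube complex $X$. Since $X$ is contractible and $F$ acts freely with finitely many orbits of cells in each dimension, for a non-trivial character $\psi$ one has $[\psi]\in\Sigma^m(F)$ if and only if the filtration of $X$ by the superlevel sets $X^{\ge t}\defeq h_\psi^{-1}([t,\infty))$ of a $\psi$-equivariant height function $h_\psi$ is essentially $(m-1)$-connected. By the Morse Lemma, lowering $t$ past a critical value changes $X^{\ge t}$ up to homotopy by coning off the ascending links (with respect to $h_\psi$) of the vertices at that value. Hence: if every ascending link is $(m-1)$-connected then $[\psi]\in\Sigma^m(F)$; and to prove $[\psi]\notin\Sigma^m(F)$ it suffices to exhibit, for every low level $t$, an $(m-1)$-cycle in $X^{\ge t}$ that stays non-zero in $H_{m-1}(X^{\ge s})$ for all $s\le t$.

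Write $\chi=a\chi_0+b\chi_1$. The characters $\chi_0,\chi_1$ record the logarithmic slopes at the two endpoints of the interval, and on $X$ they are carried by height functions $h_{\chi_0},h_{\chi_1}$ measuring, up to a bounded error, the depth of the leftmost, respectively rightmost, leaf of the forest labelling a vertex; set $h_\chi=a\,h_{\chi_0}+b\,h_{\chi_1}$. A difficulty appears immediately: $h_\chi$ is constant along every edge of $X$ that records a modification away from the two extreme leaves, so it is not a Morse function, and one must add a secondary tie-breaking term whose correct form depends on the signs of $a$ and $b$. The interval reflection $t\mapsto 1-t$ is an automorphism of $F$ interchanging $\chi_0$ and $\chi_1$, so it is enough to treat the three regimes $a<0$ (which then also settles $b<0$), $a>0=b$, and $a,b>0$.

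The expected outcome of the link analysis is as follows. When $a<0$ the secondary term can be arranged so that the ascending links are contractible, hence $[\chi]\in\Sigma^\infty(F)$, which puts $\{a<0\}\cup\{b<0\}$ inside $\Sigma^\infty(F)$. When $a,b>0$ the ascending links should be $0$-connected away from finitely many vertex orbits but should carry a persistent one-dimensional homology class, one forced at each end by the corresponding positive coefficient; this gives $[\chi]\in\Sigma^1(F)$ and $[\chi]\notin\Sigma^2(F)$. When $[\chi]=[\chi_0]$ the ascending links are disconnected, in fact empty at infinitely many vertices, which --- once persistence is established --- gives $[\chi]\notin\Sigma^1(F)$. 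The equality $\Sigma^2(F)=\Sigma^\infty(F)$ then follows formally: outside $\{a\ge0,b\ge0\}$ the first regime yields $\Sigma^\infty$, and inside it the other two exclude $\Sigma^2$.

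I expect the real difficulty to lie in two places. First, the bookkeeping: legitimising the Morse theory in the presence of the horizontal edges, fixing the right secondary Morse function in each regime, and then pinning down the homotopy types of the ascending links; these are finite subcomplexes of links in a cube complex, so the analysis is combinatorial, but the relevant conclusions ($(m-1)$-connected versus carrying exactly an $(m-1)$-dimensional hole versus disconnected) are sensitive to the fine structure and to the behaviour at ``small'' vertices. Second, and this is the step I expect to be the main obstacle, the non-membership statements: failure of $(m-1)$-connectivity of ascending links at infinitely many vertices does \emph{not} by itself force the superlevel filtration to fail essential $(m-1)$-connectivity --- the same crude link count would wrongly flag characters just outside $\{a\ge0,b\ge0\}$, which do lie in $\Sigma^\infty(F)$ --- so for the genuinely bad characters one must produce a global obstruction, for instance an explicit infinite family of $(m-1)$-cycles that provably never become null-homotopic as they are pushed to ever-lower levels, or a compatible system of deformation retractions, distinguishing the bad characters from their harmless-looking neighbours.
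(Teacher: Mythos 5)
Your outline correctly identifies the broad strategy the paper follows (Morse theory on the Stein--Farley complex with a character height function, a secondary tie-breaking term, three regimes by sign of $a,b$, and the observation that non-membership requires a global obstruction rather than mere non-connectivity of ascending links). But as you yourself flag, the proposal stops exactly where the substantive work begins, and it is precisely there that the proof lives. Three concrete gaps:

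First, you never resolve the problem of the secondary height function. The paper's solution is to use $\pm f$ (the number-of-feet function) as the secondary term, and --- crucially --- to restrict the whole argument to bands $X_{p\le f\le q}$. This restriction does two jobs at once: it makes the action cocompact (needed for Corollary~\ref{cor:changing_filtrations}) and it forces the secondary function to take only finitely many values (required by Definition~\ref{def:morse}). Without this move the Morse setup does not actually go through, because $f$ is unbounded on $X$.

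Second, and more seriously, you correctly observe that ``failure of $(m-1)$-connectivity of ascending links ... does not by itself force the superlevel filtration to fail essential $(m-1)$-connectivity,'' and you suggest one would need persistent cycles or a compatible system of retractions --- but you supply neither. The paper's way out is Observation~\ref{obs:weak_bottleneck_trick}: once ascending links are known to be $(m-2)$-connected (which the paper does verify, on the restricted band), essential $(m-1)$-connectivity of the filtration is equivalent to $(m-1)$-connectivity of a \emph{single} superlevel set $X_{p\le h}$. This converts the persistence problem into one finite computation, and is the key to making the negative direction tractable. Your roadmap does not identify this reduction and therefore has no way to conclude.

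Third, the explicit obstructions differ in kind from what you anticipate. For $[\chi_0]$, the ascending links in $X_{3\le f\le 4}$ are in fact non-empty (Lemma~\ref{lem:asc_links_3_4}), not ``empty at infinitely many vertices''; the obstruction is instead that the function $L(x)$ (number of carets above the leftmost leaf of the reduced representative) is locally constant on $X_{3\le f\le 4}^{0\le\chi_0}$ while taking multiple values, hence that superlevel set is disconnected. For $a,b>0$, the ascending links are shown to be connected (Lemma~\ref{lem:asc_links_4_7}), so the obstruction is not a 1-dimensional hole in a link but a $\pi_1$ class in the superlevel set $X_{4\le f\le 7}^{0\le\chi}$ itself, produced by covering it with pieces $Y_{L=i}^m$, $Y_{R=i}^m$ and applying a supplement to the Nerve Lemma (Lemma~\ref{lem:alternative_nerve_lemma}) to a non-simply-connected nerve. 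Both arguments are genuinely global invariants that your proposal promises but does not construct. The positive parts (the long interval giving $\Sigma^\infty$, and $\Sigma^1$ membership in the short interval via connected ascending links) do go through essentially as you describe.
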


Here $[\chi]$ denotes the equivalence class of $\chi$ up to positive scaling.

The proof in \cite{bieri10} is partly algebraic, using the fact that $F$ is an ascending HNN-extension of itself, and that it contains no non-abelian free subgroups. Here we give a completely geometric, self-contained proof of Theorem~A. It is possible is that this approach could be useful in determining the BNSR-invariants of other interesting groups.

In Section~\ref{sec:invariants} we recall the invariants $\Sigma^m(G)$, and set up the Morse theory tools we will use. In Section~\ref{sec:group_and_space} we recall Thompson's group $F$ and the Stein--Farley complex $X$, and discuss characters of $F$ and height functions on $X$. Section~\ref{sec:links_subcomplexes} is devoted to combinatorially modeling vertex links in $X$, which is a helpful tool in the computations of the $\Sigma^m(F)$. Theorem~A is proven in stages in Sections~\ref{sec:long_interval} through~\ref{sec:short_interval}.

\subsection*{Acknowledgments} Thanks are due to Robert Bieri for suggesting this problem to us, and to Matt Brin for helpful discussions.

The first author gratefully acknowledges support by the DFG through the project WI 4079/2 and through the SFB 701.


\section{The invariants}\label{sec:invariants}

A \emph{character} of a group $G$ is a homomorphism $\chi\colon G\to \R$. If the image is infinite cyclic, the character is \emph{discrete}. If $G$ is finitely generated, we can take $\Hom(G,\R) \cong \R^d$ and mod out scaling by positive real numbers to get $S(G)\defeq S^{d-1}$, called the \emph{character sphere}. Here $d$ is the rank of the abelianization of $G$. Now, the definition of the \emph{Bieri--Neumann--Strebel (BNS) invariant} $\Sigma^1(G)$ for $G$ finitely generated (first introduced in \cite{bieri87}) is the subset of $S(G)$ defined by:
\[
\Sigma^1(G) \defeq \{[\chi]\in S(G)\mid \Gamma_{0\le\chi} \text{ is connected}\} \text{.}
\]
Here $\Gamma$ is the Cayley graph of $G$ with respect to some finite generating set, and $\Gamma_{0\le\chi}$ is the full subgraph spanned by those vertices $g$ with $0\le\chi(g)$.

The higher \emph{Bieri--Neumann--Strebel--Renz (BNSR) invariants} $\Sigma^m(G)$ ($m\in\N\cup\{\infty\}$), introduced in \cite{bieri88}, are defined somewhat analogously. Let $G$ be of type $\F_m$ and let $\chi$ be a character of $G$. Let $\Gamma^m$ be the result of equivariantly gluing cells, up to dimension $m$, to the Cayley graph $\Gamma$, in a $G$-cocompact way to produce an $(m-1)$-connected space (that this is possible is more or less the definition of being of type $\F_m$). For $t\in\R$ let $\Gamma^m_{t\le \chi}$ be the full subcomplex spanned by those $g$ with $t\le \chi(g)$. Then $[\chi]\in\Sigma^m(G)$ by definition if the filtration $(\Gamma^m_{t\le\chi})_{t\in\R}$ is essentially $(m-1)$-connected.

Recall that a space $Y$ is $(m-1)$-connected if every continuous map $S^{k-1} \to Y$ is homotopic to a constant map for $k \le m$. A filtration $(Y_t)_{t \in \R}$ (with $Y_t \supseteq Y_{t+1}$) is essentially $(m-1)$-connected if for every $t$ there is a $t' \le t$ such that for all $k\le m$, every continuous map $S^{k-1} \to Y_t$ is homotopic in $Y_{t'}$ to a constant map. Recall also that a pair $(Y,Y_0)$ is $(m-1)$-connected if for all $k\le m$, every map $(D^{k-1},S^{k-2}) \to (Y,Y_0)$ is homotopic relative $Y_0$ to a map with image in $Y_0$.

In the realm of finiteness properties, the main application of the BNSR-invariants is the following, see \cite[Theorem~5.1]{bieri88} and \cite[Theorem~1.1]{bieri10}:

\begin{theorem}\cite[Theorem~1.1]{bieri10}
 Let $G$ be a group of type $\F_m$ and $N$ a normal subgroup of $G$ with $G/N$ abelian. Then $N$ is of type $\F_m$ if and only if for every $\chi\in\Hom(G,\R)$ with $\chi(N)=0$ we have $[\chi]\in\Sigma^m(G)$.
\end{theorem}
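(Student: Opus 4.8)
The statement is the \emph{Bieri--Renz criterion}, and my plan is to prove it geometrically, converting finiteness of $N$ into a connectivity property of superlevel sets in a model space for $G$. First I would reduce to the case $G/N\cong\Z^n$: since $G$ is finitely generated and $G/N$ is abelian, $G/N$ is finitely generated abelian, so it has a finite torsion subgroup $T$; letting $N'\le G$ be the preimage of $T$, the subgroup $N$ has finite index in $N'$, hence $N$ is of type $\F_m$ iff $N'$ is, while a character of $G$ kills $N$ iff it kills $N'$ (its image being torsion-free). So assume $G/N\cong\Z^n$, with quotient map $\pi\colon G\to\Z^n$; then the characters $\chi$ with $\chi(N)=0$ are exactly the composites $\bar\chi\circ\pi$ for $\bar\chi\in\Hom(\Z^n,\R)=(\R^n)^*$, and they trace out a subsphere $S(G,N)\cong S^{n-1}$ of $S(G)$.

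Next I would fix $Y\defeq\Gamma^m$ from Section~\ref{sec:invariants}: a free, finite-dimensional, $G$-cocompact, $(m-1)$-connected $G$-CW-complex, with the characters extended affinely over cells so that each superlevel set $Y_{t\le\chi}$ is a subcomplex and $[\chi]\in\Sigma^m(G)$ means precisely that $(Y_{t\le\chi})_{t\in\R}$ is essentially $(m-1)$-connected. The subgroup $N$ also acts freely on the $(m-1)$-connected complex $Y$. Using the orbit map $G\to G/N=\Z^n$ and a norm on $\R^n$ whose unit ball is a polytope $P=\bigcap_{j=1}^{r}\{x:\bar\chi_j(x)\le 1\}$ (with the $\bar\chi_j\in(\R^n)^*$ nonzero), I would build an $N$-cocompact filtration $(Y_\alpha)_{\alpha\ge 0}$ of $Y$ by preimages of the balls $\alpha P$, with $\bigcup_\alpha Y_\alpha=Y$. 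By Brown's criterion, $N$ is of type $\F_m$ if and only if $(Y_\alpha)_\alpha$ is essentially $(m-1)$-connected, and this property does not depend on the chosen $N$-cocompact filtration. So the theorem reduces to comparing essential $(m-1)$-connectivity of the ball filtration with that of the half-space filtrations $(Y_{t\le\bar\chi_j\circ\pi})_t$, i.e.\ with membership of the $[\bar\chi_j\circ\pi]$ in $\Sigma^m(G)$.

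For the reverse implication, suppose $N$ is of type $\F_m$ but some $[\bar\chi\circ\pi]\notin\Sigma^m(G)$. Then for some $k\le m$ there is a $(k-1)$-sphere $\sigma$ lying in a superlevel set $Y_{t_0\le\bar\chi\circ\pi}$ that bounds in $Y$ but such that \emph{every} disk in $Y$ bounding it has $(\bar\chi\circ\pi)$-height unbounded below (extracting one such stable class is a routine cocompactness argument). Since $\sigma$ lies over a bounded region of $\Z^n$ it sits in some $Y_{\alpha_0}$, and every disk bounding it leaves every ball $Y_\beta$; hence $[\sigma]\in\pi_{k-1}(Y_{\alpha_0})$ maps nontrivially into $\pi_{k-1}(Y_\beta)$ for all $\beta\ge\alpha_0$, so $(Y_\alpha)_\alpha$ fails to be essentially $(m-1)$-connected --- a contradiction. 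For the forward implication, assume $S(G,N)\subseteq\Sigma^m(G)$, so each $[\bar\chi_j\circ\pi]\in\Sigma^m(G)$. Given a $(k-1)$-sphere $\sigma$ in $Y_\alpha$, fill it by a disk $D$ in the $(m-1)$-connected complex $Y$; since $\alpha'P=\bigcap_j\{\bar\chi_j\le\alpha'\}$, the complement of a larger ball is covered by the outward regions $\{\bar\chi_j\ge\alpha'\}$, and essential $(m-1)$-connectivity of the direction-$\bar\chi_j$ filtration lets one push the part of $D$ over each such region back toward the ball. Doing this one facet of $P$ at a time, and treating corners by induction on the dimension of the faces of $P$, eventually confines $D$ to some $Y_\beta$, so $(Y_\alpha)_\alpha$ is essentially $(m-1)$-connected and $N$ is of type $\F_m$.

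I expect the forward implication to be the main obstacle: organizing the Morse-theoretic push-downs over the faces of $P$ so that they stay compatible across corners and the final radius $\beta$ remains controlled is really a multi-parameter Mayer--Vietoris over the nerve of the half-space cover rather than a single one-parameter Morse argument; an alternative is to run the same bookkeeping on the cellular chain complexes over the monoid rings $\Z G_{\chi\ge 0}$. A secondary, routine point is the extraction of a single non-fillable sphere in the reverse implication. It is worth noting that the forward implication uses only the finitely many facet-normal directions $[\bar\chi_j\circ\pi]$, so while the ``for every $\chi$'' phrasing of the theorem is the clean one, it is the reverse implication that genuinely forces \emph{every} direction of $S(G,N)$ into $\Sigma^m(G)$.
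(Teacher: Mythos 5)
The paper does not prove this statement; it quotes it from Bieri--Renz (homological version) and Bieri--Geoghegan--Kochloukova (homotopical version) and then uses it as a black box. So there is no in-paper argument to compare against, and I am assessing your sketch on its own.

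Your skeleton is the right one and matches the standard geometric line of attack: reduce to $G/N\cong\Z^n$, use Brown's criterion to translate ``$N$ is of type $\F_m$'' into essential $(m-1)$-connectivity of an $N$-cocompact polytopal-ball filtration $(Y_\alpha)$ of an $(m-1)$-connected $G$-cocompact $Y$, and compare that filtration with the half-space filtrations attached to the facet normals $\bar\chi_j$. But the hard implication --- $S(G,N)\subseteq\Sigma^m(G)\Rightarrow N$ of type $\F_m$ --- is left as a plan, and the plan does not close as written. ``Pushing $D$ over one facet at a time'' is a homotopy carried out inside the half-space $\{\bar\chi_j\ge\alpha'\}$, and nothing stops it from increasing some other $\bar\chi_{j'}$ there; iterating over the facets can therefore cycle indefinitely and need not confine $D$ to any ball. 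The honest repairs you gesture at --- Morse theory for $\rho=\max_j\bar\chi_j$ with ascending links analyzed as joins, or a Mayer--Vietoris/nerve argument over the half-space cover --- both require controlling the essential connectivity of intersections $\bigcap_{j\in J}\{\bar\chi_j\ge\alpha'\}$ with $\abs{J}\ge 2$, and this is \emph{not} granted by the hypothesis that each individual $[\bar\chi_j\circ\pi]$ lies in $\Sigma^m(G)$; it needs its own induction. That induction is exactly where the content of Bieri--Renz lives, and it is absent from the proposal.

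The ``easy'' implication has a smaller but genuine gap too. Calling the extraction of a single sphere surviving into every $Y_{t'\le\chi}$ ``a routine cocompactness argument'' underestimates it: the defining negation of $[\chi]\in\Sigma^m(G)$ only supplies, for each $t'$, a $t'$-dependent sphere in $Y_{t_0\le\chi}$ that refuses to die in $Y_{t'\le\chi}$, and those spheres may have unbounded diameter, so none is forced into a fixed ball $Y_{\alpha_0}$. Translating by $G$ to fit one into a fixed ball simultaneously shifts its $\chi$-level, so you cannot control the ball radius and the superlevel bound at once. This uniformity problem is precisely what the literature handles either by passing to the $\FP_m$/monoid-ring formulation (finitely generated modules over $\Z G$, where ``essential triviality'' becomes a finiteness condition) or by a separate compactness lemma, and your sketch should at least name which route it takes. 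Your closing observation --- that the forward direction only uses finitely many facet directions, so it is the converse that genuinely forces \emph{all} of $S(G,N)$ into $\Sigma^m(G)$ --- is correct and is a nice way to see where openness of $\Sigma^m$ enters the picture, but it doesn't offset the two gaps above.
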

As an example, if $\chi$ is discrete then $\ker(\chi)$ is of type $\F_m$ if and only if $[\pm\chi]\in\Sigma^m(G)$.

It turns out that to compute $\Sigma^m(G)$, one can use spaces and filtrations other than just $\Gamma^m$ and $(\Gamma^m_{t\le\chi})_{t\in\R}$, namely:

\begin{proposition}\cite[Definition~8.1]{bux04}\label{prop:bnsr_def}
 Let $G$ be of type $\F_m$, acting cellularly on an $(m-1)$-connected CW complex $Y$. Suppose the stabilizer of any $k$-cell is of type $\F_{m-k}$, and that the action of $G$ on $Y^{(m)}$ is cocompact. For any non-trivial character $\chi\in\Hom(G,\R)$, there is a \emph{character height function}, denoted $h_\chi$, i.e., a continuous map $h_\chi\colon Y\to\R$, such that $h_\chi(gy)=\chi(g)+h_\chi(y)$ for all $y\in Y$ and $g\in G$. Then $[\chi]\in\Sigma^m(G)$ if and only if the filtration $(h_\chi^{-1}([t,\infty)))_{t\in\R}$ is essentially $(m-1)$-connected.
\end{proposition}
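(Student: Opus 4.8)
The plan is to deduce the Proposition from the definition of $\Sigma^m(G)$ in terms of the Cayley-graph model $\Gamma^m$, by comparing the superlevel filtration of $h_\chi$ on $Y$ with the filtration $(\Gamma^m_{t\le\chi})_{t\in\R}$. First, $h_\chi$ is produced by equivariant obstruction theory: view $\R$ as a contractible $G$-space via the affine translations $t\mapsto\chi(g)+t$ and extend a $G$-map over the skeleta of $Y^{(m)}$ (all that matters for an $(m-1)$-connectivity statement) one cell-orbit at a time --- this needs $\chi$ to vanish on cell stabilizers, which is automatic for free actions, as in the application to $F$ acting on $X$ --- and then extend continuously over the rest of $Y$. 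By construction $h_\chi(gy)=\chi(g)+h_\chi(y)$.

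The heart of the matter is an \emph{invariance principle}: if $G$ acts on two $(m-1)$-connected CW complexes $Y$, $Z$ satisfying the hypotheses of the Proposition, with character height functions $h^Y$, $h^Z$ for the same $\chi$, then the superlevel filtration of $h^Y$ is essentially $(m-1)$-connected if and only if that of $h^Z$ is. Equivariant obstruction theory --- here is where the hypothesis that a $k$-cell stabilizer is of type $\F_{m-k}$ enters --- yields $G$-maps $f\colon Y^{(m)}\to Z$, $f'\colon Z^{(m)}\to Y$ and $G$-equivariant homotopies $f'f\simeq\id$, $ff'\simeq\id$ through dimension $m$. Because the $m$-skeleta are $G$-cocompact and the height functions are $\chi$-equivariant, $f$, $f'$ and these homotopies move heights by a uniformly bounded amount: the bound is attained on a compact fundamental domain and propagated by equivariance. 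Hence $f$ carries $(h^Y)^{-1}([t,\infty))$ into $(h^Z)^{-1}([t-C,\infty))$, $f'$ carries $(h^Z)^{-1}([s,\infty))$ into $(h^Y)^{-1}([s-C',\infty))$, and the homotopy $f'f\simeq\id$ stays in $(h^Y)^{-1}([t-D,\infty))$ over $(h^Y)^{-1}([t,\infty))$, for constants $C,C',D\ge 0$. Now a sandwiching argument closes the loop: to kill a sphere in $(h^Y)^{-1}([t,\infty))$, push it into $Z$ by $f$, contract it there (using essential $(m-1)$-connectivity of $Z$'s filtration), push the filling back by $f'$, and splice on the homotopy $f'f\simeq\id$; the resulting null-homotopy lives in $(h^Y)^{-1}([t'',\infty))$ with $t''\le t$ depending only on $t$ and the fixed constants. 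The symmetric argument gives the converse. I expect this to be the main obstacle: building the equivariant comparison maps with the correct finiteness input, and especially bookkeeping all the bounded shifts simultaneously so that ``essentially $(m-1)$-connected'' really does transfer.

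Finally, apply the invariance principle with $Z=\Gamma^m$, which has trivial (hence type $\F_\infty$) cell stabilizers, is $(m-1)$-connected, and has $G$-cocompact $m$-skeleton by construction, with height function $h^{\Gamma^m}_\chi$ equal to $\chi$ on vertices and affine on cells. One checks that the superlevel filtration of $h^{\Gamma^m}_\chi$ and the filtration $(\Gamma^m_{t\le\chi})_{t\in\R}$ are essentially $(m-1)$-connected for exactly the same $t$: since $\chi$ takes only finitely many values on the cells of a fundamental domain, $\Gamma^m_{t\le\chi}\subseteq(h^{\Gamma^m}_\chi)^{-1}([t,\infty))\subseteq\Gamma^m_{(t-M)\le\chi}$ for a fixed $M\ge 0$, so the two filtrations interleave. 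Combining the three steps gives $[\chi]\in\Sigma^m(G)$ if and only if $(h_\chi^{-1}([t,\infty)))_{t\in\R}$ is essentially $(m-1)$-connected.
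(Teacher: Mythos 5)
The paper does not prove this Proposition; it cites it directly from Bux \cite[Definition~8.1]{bux04}, so there is no proof in the paper to compare against. (The only nearby argument, the proof of Corollary~\ref{cor:changing_filtrations}, concerns a different, easier comparison --- between $(Y_{t\le\chi})_t$ and $(h_\chi^{-1}([t,\infty)))_t$ for the \emph{same} $Y$.) So you should judge your sketch on its own merits against the $\Gamma^m$ definition.

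The overall shape of your plan is right: build a height function, produce $G$-maps between $Y$ and $\Gamma^m$ with equivariant homotopies, show by cocompactness and $\chi$-equivariance that everything shifts heights by bounded amounts, and interleave the two filtrations. That is indeed the heart of the matter, and the sandwiching argument you describe is the standard one.

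The genuine gap is in the sentence attributing the existence of $G$-maps $f\colon Y^{(m)}\to Z$ and $f'\colon Z^{(m)}\to Y$ to ``equivariant obstruction theory, which is where the $\F_{m-k}$ hypothesis enters.'' Take $Z=\Gamma^m$, which is $G$-free. If some cell of $Y$ has a nontrivial stabilizer $G_\sigma$, then \emph{no} $G$-map $Y^{(m)}\to\Gamma^m$ exists at all: a $G_\sigma$-fixed point of $Y$ would have to go to a $G_\sigma$-fixed point of $\Gamma^m$, and there are none. Obstruction theory, however powerful, cannot overcome this; the $\F_{m-k}$ hypothesis is not used to make the obstruction groups vanish. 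Rather it is what allows one to first replace $Y$ by a \emph{free} $G$-CW complex $\hat Y$ (e.g.\ via a Borel-type construction) that still has $G$-cocompact $m$-skeleton and is $(m-1)$-connected, equipped with a controlled height function; only after this replacement can free-action obstruction theory between $\hat Y$ and $\Gamma^m$ be run. Likewise, as you yourself note, even the \emph{existence} of $h_\chi$ forces $\chi$ to vanish on point stabilizers, so the Proposition's existence claim already secretly requires either free action or a vanishing hypothesis on stabilizers. In the case the paper actually needs --- $F$ acting freely on $X$ --- all of this disappears and your argument specializes to a correct proof; but as a proof of the Proposition in the generality stated, the free-resolution step is missing, not optional.
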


It is a fact that this is independent of $Y$ and $h_\chi$. In our case $G$ can be naturally regarded as a subset of $Y$ and we will therefore write $\chi$ to denote both the character and the character height function.

The filtration of $Y$ that we will actually use in practice is $(Y_{t\le \chi})_{t\in\R}$, where $Y_{t\le \chi}$ is defined to be the full subcomplex of $Y$ supported on those vertices $y$ with $t\le \chi(y)$. Our working definition of $\Sigma^m(G)$ will be the following:

\begin{corollary}[Working definition]\label{cor:changing_filtrations}
 Let $G$, $Y$ and $\chi$ be as in Proposition~\ref{prop:bnsr_def}. Then $[\chi]\in\Sigma^m(G)$ if and only if the filtration $(Y_{t\le \chi})_{t\in\R}$ is essentially $(m-1)$-connected.
\end{corollary}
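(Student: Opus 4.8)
The plan is to read this off from Proposition~\ref{prop:bnsr_def}, which already characterizes $[\chi]\in\Sigma^m(G)$ as essential $(m-1)$-connectivity of the filtration by genuine superlevel sets $\bigl(h_\chi^{-1}([t,\infty))\bigr)_{t\in\R}$. So the task is only to show that this filtration and the filtration $\bigl(Y_{t\le\chi}\bigr)_{t\in\R}$ by full subcomplexes are essentially $(m-1)$-connected under exactly the same circumstances. The two are not literally equal --- a superlevel set of $h_\chi$ is in general not a subcomplex --- but they are interleaved tightly enough that essential $(m-1)$-connectivity cannot tell them apart. I will use throughout that essential $(m-1)$-connectivity of a descending filtration of $Y$ is detected after intersecting the filtration with the $m$-skeleton $Y^{(m)}$: a sphere $S^{k-1}$ with $k\le m$ maps, up to homotopy, into the $(k-1)$-skeleton, and a nullhomotopy of it may be carried out in the $k$-skeleton, with $k\le m$. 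For a filtration by subcomplexes this is cellular approximation; for the superlevel filtration it needs a small argument, to which I return at the end.

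The only quantitative input is an oscillation bound. Since $G$ acts cocompactly on $Y^{(m)}$ there are finitely many $G$-orbits of cells of dimension at most $m$, and since $h_\chi(gy)=\chi(g)+h_\chi(y)$ translating a cell by an element of $G$ shifts $h_\chi$ on that cell by an additive constant; hence the oscillation $\operatorname{osc}(\sigma)\defeq\sup_\sigma h_\chi-\inf_\sigma h_\chi$ depends only on the $G$-orbit of $\sigma$, and I may fix $D\ge 0$ bounding $\operatorname{osc}(\sigma)$ over all cells of dimension at most $m$. Put $A_t\defeq Y_{t\le\chi}\cap Y^{(m)}$ and $B_t\defeq h_\chi^{-1}([t,\infty))\cap Y^{(m)}$. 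I then verify the interleaving
\[
A_{t+D}\ \subseteq\ B_t\ \subseteq\ A_{t-D}\qquad(t\in\R).
\]
For the left inclusion: if every vertex of a cell $\sigma$ with $\dim\sigma\le m$ has $\chi$-value at least $t+D$, then $h_\chi\ge(t+D)-D=t$ everywhere on $\sigma$, since $h_\chi$ varies by at most $D$ on $\sigma$; thus $\sigma\subseteq B_t$. For the right inclusion: if some point of such a $\sigma$ lies in $h_\chi^{-1}([t,\infty))$, then every vertex of $\sigma$ has $\chi$-value at least $t-D$ by the same estimate, so $\sigma\subseteq A_{t-D}$.

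It remains to invoke the elementary fact that two descending filtrations $\bigl(A_t\bigr)_{t\in\R}$ and $\bigl(B_t\bigr)_{t\in\R}$ with $A_{t+D}\subseteq B_t\subseteq A_{t-D}$ for all $t$ are simultaneously essentially $(m-1)$-connected. This is a diagram chase: given $t$ and a map $S^{k-1}\to B_t$ with $k\le m$, compose with $B_t\subseteq A_{t-D}$; essential $(m-1)$-connectivity of $\bigl(A_t\bigr)$ applied at the parameter $t-D$ yields $s\le t-D$ in which this composite is nullhomotopic; and since moreover $A_s\subseteq B_{s-D}$, the original map is nullhomotopic in $B_{s-D}$ with $s-D\le t$ --- exactly what essential $(m-1)$-connectivity of $\bigl(B_t\bigr)$ at the parameter $t$ demands. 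The reverse implication is symmetric. Applying this to the $A_t$ and $B_t$ above, then undoing the reduction to $m$-skeleta and quoting Proposition~\ref{prop:bnsr_def}, gives the corollary.

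The bookkeeping above is routine, and I expect the crux of a careful write-up to be the single point flagged in the first paragraph: because $h_\chi^{-1}([t,\infty))$ is not a CW subcomplex, the reduction of its essential-connectivity behaviour to that of $h_\chi^{-1}([t,\infty))\cap Y^{(m)}$ is not a formal consequence of cellular approximation and must be argued directly. One clean way is to first replace $h_\chi$ by a $G$-equivariant height function that is affine on each cell --- this differs from $h_\chi$ boundedly on each cell of $Y^{(m)}$, hence by the interleaving principle above leaves all connectivity statements intact --- after which $h_\chi^{-1}([t,\infty))$ is a polytopal complex and the needed reduction follows from a standard observation in Bestvina--Brady Morse theory, namely that the superlevel set of an affine height function deformation retracts onto the full subcomplex spanned by the vertices lying above the given level.
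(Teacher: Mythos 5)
Your proof follows essentially the same route as the paper: pass to $m$-skeleta, extract a uniform oscillation bound $D$ from cocompactness and equivariance, observe that the two filtrations are $D$-interleaved there, and conclude by the standard interleaving argument. You have usefully expanded the two steps the paper leaves implicit (the interleaving-implies-simultaneous-essential-connectivity chase, and the reduction to $m$-skeleta), and you correctly flag the one genuine subtlety that the paper's one-line remark ``this is really a statement about $m$-skeleta'' glosses over: for the superlevel-set filtration $\bigl(h_\chi^{-1}([t,\infty))\bigr)_t$ this reduction is not a formal consequence of cellular approximation, because $h_\chi^{-1}([t,\infty))$ is not a subcomplex and the oscillation bound controlling the relevant homotopies is only available on cells of dimension at most $m$. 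Your suggested repair (choose $h_\chi$ affine on cells, using the independence of the choice of character height function, and then either retract or interleave) is the right instinct; just note that the deformation-retraction fact you quote from Bestvina--Brady Morse theory is not unconditional --- it requires a Morse-type nondegeneracy of $h_\chi$ on edges --- so the cleaner phrasing is to replace $h_\chi$ by such a Morse representative and then run the same interleaving you already wrote.
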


\begin{proof}
 We need to show that $(Y_{t\le \chi})_{t\in\R}$ is essentially $(m-1)$-connected if and only if $(h_\chi^{-1}([t,\infty)))_{t\in\R}$ is. First note that this is really a statement about $m$-skeleta. Then since $m$-skeleta are $G$-cocompact, there exists a uniform bound $D$ such that for any two points $y,y'$ of $Y$ sharing a $k$-cell ($k\le m$), we have $\abs{\chi(y)-\chi(y')}\le D$. Now the result is immediate, since $Y_{t\le \chi}^{(m)} \subseteq \chi^{-1}([t-D,\infty))^{(m)}$ and $\chi^{-1}([t,\infty))^{(m)} \subseteq Y_{t-D\le \chi}^{(m)}$.
\end{proof}

\begin{remark}
 Bieri and Geoghegan generalized the BNSR-invariants in \cite{bieri03}, to a family of invariants $\Sigma^m(\rho)$ defined for any (sufficiently nice) action $\rho$ of a group on a $\CAT(0)$ space. The classical BNSR-invariants agree with the case where $\rho$ is the action of $G$ on $\Hom(G,\R)$. See also \cite[Section~18.3]{geoghegan08}.
\end{remark}

\subsection{Morse theory}\label{sec:morse}

The criterion Corollary~\ref{cor:changing_filtrations} is particularly useful in the situation where $Y$ is an affine cell complex and $\chi$ is affine on cells. One can then make use of a version of Bestvina--Brady Morse theory and study relative connectivity locally in terms of ascending/descending links.

By an \emph{affine cell complex} we mean a complex that is obtained by gluing together euclidean polytopes. More precisely, $Y$ is an affine cell complex if it is the quotient $\hat{Y}/\mathop{\sim}$ of a disjoint union $\hat{Y} = \bigcup_\lambda C_{\lambda}$ of euclidean polytopes modulo an equivalence relation $\sim$ such that every polytope is mapped injectively to $Y$, and such that if two faces of polytopes have a (relative) interior point identified then their entire (relative) interior is isometrically identified (see \cite[Definition~I.7.37]{bridson99} for a more general definition). In particular, every cell (meaning every image of some polytope in $\hat{Y}$) carries an affine structure. The link $\lk_Y v$ of a vertex $v$ of $Y$ consists of directions issuing at the vertex. It naturally carries the structure of a spherical cell complex, whose closed cells consist of directions that point into closed cells of $Y$.

\begin{definition}[Morse function]\label{def:morse}
 The most general kind of \emph{Morse function} on $Y$ that we will be using is a map $(h,s) \colon Y \to \R \times \R$ such that both $h$ and $s$ are affine on cells. The codomain is ordered lexicographically, and the conditions for $(h,s)$ to be a Morse function are the following: the function $s$ takes only finitely many values on vertices of $Y$, and there is an $\varepsilon > 0$ such that every pair of adjacent vertices $v$ and $w$ either satisfy $\abs{h(v) - h(w)} \ge \varepsilon$, or else $h(v) = h(w)$ and $s(v)\ne s(w)$.
\end{definition}

As an example, if $h$ is discrete and $s$ is constant, we recover the notion of ``Morse function'' from \cite{bestvina97}. We think of a Morse function as assigning a \emph{height} $h(v)$ to each vertex $v$. The secondary height function $s$ ``breaks ties'' for adjacent vertices of the same height. We will speak of $(h,s)(v)$ as the \emph{refined height} of $v$. Note that since $h$ is affine, the set of points of a cell $\sigma$ on which $h$ attains its maximum is a face $\bar{\sigma}$. Since $s$ is affine as well, the set of points of $\bar{\sigma}$ on which $s$ attains its maximum is a face $\hat{\sigma}$. If $\hat{\sigma}$ were to have two adjacent vertices, these vertices would have the same refined height, and $(h,s)$ would not be a Morse function. This shows that every cell has a unique vertex of maximal refined height and (by symmetry) a unique vertex of minimal refined height.

The \emph{ascending star} $\st^{(h,s)\uparrow} v$ of a vertex $v$ (with respect to $(h,s)$) is the subcomplex of $\st v$ consisting of cells $\sigma$ such that $v$ is the vertex of minimal refined height in $\sigma$. The \emph{ascending link} $\lk^{(h,s)\uparrow} v$ of $v$ is the link of $v$ in $\st^{(h,s)\uparrow} v$. The \emph{descending star} and the \emph{descending link} are defined analogously. A consequence of $h$ and $s$ being affine is the following.

\begin{observation}\label{obs:ascending_links_full}
 Ascending and descending links are full subcomplexes. \qed
\end{observation}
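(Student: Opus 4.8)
The plan is to unwind the definition of fullness and reduce to a statement purely about refined heights of the vertices of a single cell. Recall that a subcomplex $Z\subseteq\lk_Y v$ is full precisely when a closed cell of $\lk_Y v$ lies in $Z$ as soon as all of its vertices do. A closed cell of $\lk_Y v$ is the set $\sigma'$ of directions at $v$ pointing into some cell $\sigma\ni v$ of $Y$, and the vertices of $\sigma'$ are the directions of the edges of $\sigma$ issuing from $v$. So, writing out what it means for every vertex of $\sigma'$ to lie in $\lk^{(h,s)\uparrow}v$, the statement to prove becomes: if $v$ has strictly smaller refined height than every vertex of $\sigma$ joined to it by an edge of $\sigma$, then $v$ has strictly smaller refined height than every vertex of $\sigma$, i.e.\ $\sigma\in\st^{(h,s)\uparrow}v$. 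The descending case is entirely symmetric.

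The key tool is that for a euclidean polytope $\sigma$ with vertex $v$, every point $p\in\sigma$ can be written $p=v+\sum_i\lambda_i(w_i-v)$ with all $\lambda_i\ge 0$, where $w_1,\dots,w_k$ are the vertices of $\sigma$ adjacent to $v$; that is, $\sigma-v$ lies in the tangent cone of $\sigma$ at $v$, which is generated by the edge directions at $v$. Since $h$ is affine on $\sigma$, this gives $h(p)=h(v)+\sum_i\lambda_i\bigl(h(w_i)-h(v)\bigr)$. The hypothesis forces $(h,s)(v)<(h,s)(w_i)$, hence $h(w_i)\ge h(v)$, for every $i$; therefore $h(p)\ge h(v)$ for all $p\in\sigma$, so $h$ attains its minimum over $\sigma$ at $v$, and $v$ lies on the face $\sigma_0\subseteq\sigma$ on which $h$ is minimal.

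Now I would run the same argument one dimension down, with $s$ in place of $h$ and $\sigma_0$ in place of $\sigma$. As $\sigma_0$ is a face of $\sigma$, the function $s$ is affine on it, and the edges of $\sigma_0$ at $v$ are among the edges of $\sigma$ at $v$; along any such edge to a vertex $w$ of $\sigma_0$ we have $h(w)=h(v)$, so $(h,s)(v)<(h,s)(w)$ forces $s(w)>s(v)$. The tangent-cone/affineness computation then yields $s(p)\ge s(v)$ for all $p\in\sigma_0$, so $v$ lies on the face $\tau\subseteq\sigma_0$ on which $s$ is minimal. Finally, $\tau$ can have no edge at $v$: along such an edge to a vertex $w$ we would have $h(w)=h(v)$ and $s(w)=s(v)$, contradicting $(h,s)(v)<(h,s)(w)$. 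Hence $\tau=\{v\}$, which is exactly the assertion (from the discussion preceding the Observation) that $v$ is the unique vertex of minimal refined height in $\sigma$; thus $\sigma\in\st^{(h,s)\uparrow}v$ and $\sigma'\subseteq\lk^{(h,s)\uparrow}v$, proving fullness.

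The one point that needs care is the observation in the first paragraph that not every vertex of a polytope is joined to $v$ by an edge, so the information "ascending along every incident edge'' must be upgraded to "ascending throughout the cell'' — and this is precisely what the tangent-cone description of a polytope at a vertex, together with the affineness of $h$ and then of $s$, is designed to supply. Everything else is routine bookkeeping with the lexicographic order on $\R\times\R$.
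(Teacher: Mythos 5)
Your proof is correct, and it supplies precisely the argument the paper leaves implicit: the Observation is stated with a \qed and the remark that it is ``a consequence of $h$ and $s$ being affine,'' and your write-up is the natural unwinding of that remark. The essential content is the polyhedral fact you isolate -- that the tangent cone of a euclidean polytope at a vertex is generated by the edge directions there -- combined twice with affineness (first for $h$ on $\sigma$, then for $s$ on the $h$-minimal face $\sigma_0$), and you correctly chain this with the lexicographic order to conclude that the $(h,s)$-minimal face $\tau$ of $\sigma$ has no edge at $v$, hence is $\{v\}$, so $v$ is the unique vertex of minimal refined height. This is exactly the reduction needed (fullness of $\lk^{(h,s)\uparrow}v$ is equivalent to the implication ``all edges of $\sigma$ at $v$ ascending $\Rightarrow$ $\sigma$ in the ascending star''), and the descending case is symmetric as you say. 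No gaps.
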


We use notation like $Y_{p \le h \le q}$ to denote the full subcomplex of $Y$ supported on the vertices $v$ with $p \le h(v) \le q$ (this is the union of the closed cells all of whose vertices lie within the bounds). An important tool we will use is:

\begin{lemma}[Morse Lemma]
 Let $p,q,r \in \R \cup \{\pm \infty\}$ be such that $p \le q \le r$. If for every vertex $v \in Y_{q < h \le r}$ the descending link $\lk^{(h,s)\downarrow}_{Y_{p \le h}} v$ is $(k-1)$-connected then the pair $(Y_{p \le h \le r},Y_{p \le h \le q})$ is $k$-connected. If for every vertex $v \in Y_{p \le h < q}$ the ascending link $\lk^{(h,s)\uparrow}_{Y_{h \le r}} v$ is $(k-1)$-connected then the pair $(Y_{p \le h \le r},Y_{q \le h \le r})$ is $k$-connected.
\end{lemma}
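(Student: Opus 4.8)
The plan is to prove the first assertion and to deduce the second from it by symmetry. Indeed $(-h,-s)$ is again a Morse function (with the same $\varepsilon$), and negating both coordinates reverses the lexicographic order, so the vertex of minimal refined height of a cell for $(-h,-s)$ is its vertex of maximal refined height for $(h,s)$; hence ascending stars and links for $(h,s)$ coincide with descending stars and links for $(-h,-s)$. Applying the first assertion to $(-h,-s)$ with the parameters $-r\le -q\le -p$ (admissible since $p\le q\le r$), its hypothesis becomes: for every $v\in Y_{p\le h<q}$ the complex $\lk^{(-h,-s)\downarrow}_{Y_{-r\le -h}}v=\lk^{(h,s)\uparrow}_{Y_{h\le r}}v$ is $(k-1)$-connected; and its conclusion becomes that $(Y_{-r\le -h\le -p},Y_{-r\le -h\le -q})=(Y_{p\le h\le r},Y_{q\le h\le r})$ is $k$-connected. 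That is exactly the second assertion, so only the first needs proof.

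For the first assertion I would run the usual Bestvina--Brady sweep. Enumerate the vertices $v_1,v_2,\dots$ of $Y_{q<h\le r}$ with non-decreasing refined heights $(h,s)(v_i)$ (in our applications this is a genuine sequence; the general case is handled by the transfinite version of the same construction together with the compactness of disks). Put $Z_0\defeq Y_{p\le h\le q}$, and for $i\ge1$ put $Z_i\defeq Z_{i-1}\cup S_i$, where $S_i$ is the relative descending star of $v_i$ in $Y_{p\le h\le r}$, i.e.\ the subcomplex of those cells $\sigma$ of $Y_{p\le h\le r}$ having $v_i$ as their (unique) vertex of maximal refined height. Every cell of $Y_{p\le h\le r}$ either has all vertices at height $\le q$ --- so it lies in $Z_0$ --- or has its top vertex among the $v_i$; hence $\bigcup_i Z_i=Y_{p\le h\le r}$, and it suffices to control each inclusion $Z_{i-1}\subseteq Z_i$.

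The heart of the argument is the local picture at $v_i$. Every cell of $S_i$ contains $v_i$, so $S_i$ is the closed cone with apex $v_i$ over $L_i\defeq\lk(v_i,S_i)$, and is in particular contractible; moreover $L_i=\lk^{(h,s)\downarrow}_{Y_{p\le h}}v_i$, since in a cell of $S_i$ the vertices other than $v_i$ have $h$-value strictly below $h(v_i)\le r$, making the constraint $h\le r$ vacuous there. Next one checks $Z_{i-1}\cap S_i=L_i$: a face $\tau$ of a cell of $S_i$ that omits $v_i$ has top vertex $u$ of strictly smaller refined height than $v_i$, so either $h(u)\le q$, whereupon every vertex of $\tau$ has height $\le q$ and $\tau\subseteq Z_0$, or $h(u)>q$, whereupon $u=v_j$ with $j<i$ by the chosen ordering and $\tau\subseteq S_j\subseteq Z_{i-1}$; conversely no cell of $S_i$ that contains $v_i$ lies in $Z_{i-1}$. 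By hypothesis $L_i$ is $(k-1)$-connected, so the pair $(S_i,L_i)$ --- a contractible complex relative to a $(k-1)$-connected subcomplex --- is $k$-connected, and since $Z_i$ is obtained from $Z_{i-1}$ by adjoining $S_i$ along $L_i$, excision shows $(Z_i,Z_{i-1})$ is $k$-connected. Finally, a map $(D^j,S^{j-1})\to(Y_{p\le h\le r},Y_{p\le h\le q})$ with $j\le k$ has compact image, hence lands in some $Z_N$; pushing it successively through the $k$-connected pairs $(Z_N,Z_{N-1}),\dots,(Z_1,Z_0)$, rel its boundary, homotopes it into $Z_0=Y_{p\le h\le q}$. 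So $(Y_{p\le h\le r},Y_{p\le h\le q})$ is $k$-connected.

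The step I expect to be the main obstacle is the bookkeeping in that local picture: coordinating the refined-height ordering of the $v_i$ with the equality $Z_{i-1}\cap S_i=L_i$, and keeping track of the role of the secondary height $s$ in ensuring that ``the top vertex of a cell'' is well defined, so that $S_i$ is an honest cone and $L_i$ really is the descending link. The transfinite/compactness wrapping needed when the $h$-values are not discrete is routine but worth stating explicitly.
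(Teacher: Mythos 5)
Your overall strategy is the same as the paper's: sweep across $Y_{q < h \le r}$ vertex by vertex, adjoining descending stars, and deduce the second statement by negating the Morse function (that reduction is fine). The gap is in the existence of the ordering you sweep along. You propose to ``enumerate the vertices $v_1,v_2,\ldots$ of $Y_{q<h\le r}$ with non-decreasing refined heights $(h,s)(v_i)$'', claiming that in our applications this is an honest sequence and that the general case is a routine transfinite/compactness matter. Neither clause holds up. In this paper's applications $h=a\chi_0+b\chi_1$ with arbitrary real $a,b$, so $h$ takes values in $a\Z+b\Z$, which is dense in $\R$ whenever $a/b$ is irrational; a non-decreasing enumeration of such heights does not exist. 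And the transfinite version you would need is a well-order $\preceq$ extending the $(h,s)$-preorder on vertices --- this is exactly what your identity $Z_{i-1}\cap S_i=L_i$ uses, since whenever the top vertex $u$ of a face of a cell in $S_i$ has $h(u)>q$, you need $u$ to have been processed earlier. Such a $\preceq$ exists only if the set of refined-height values is itself well-ordered in $\R\times\R$ (a strictly decreasing sequence of refined heights would force an infinite descending $\preceq$-chain), and when $h$ is dense in $(q,r]$ this fails.

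The missing idea is the paper's opening move: reduce, by induction on subdividing $[q,r]$ into pieces of length $\le\varepsilon$ (and compactness of spheres when $r=\infty$), to the case $r-q\le\varepsilon$, with $\varepsilon$ as in Definition~\ref{def:morse}. Inside such a thin slab the Morse condition makes any strict $h$-descent between adjacent vertices jump below $q$: if $h(w)<h(v)$ then $h(w)\le h(v)-\varepsilon\le r-\varepsilon\le q$, so $w$ already lies in the base $Y_{p\le h\le q}$ and its relative order is irrelevant. The sweep order therefore only has to respect $s$, and since $s$ takes finitely many values on vertices, a compatible well-order always exists, with no hypothesis on $h$. Your compactness fallback can probably be patched when $Y$ is locally finite (close a finite subcomplex under descending stars and use the same $\varepsilon$-gap in $h$ and the finiteness of the $s$-range to see this terminates), but that is not routine, it secretly invokes the same $\varepsilon$ argument, and it is less general than the lemma as stated, which assumes no local finiteness. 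Without the $r-q\le\varepsilon$ reduction the proof is incomplete.
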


\begin{proof}
 The second statement is like the first with $(h,s)$ replaced by $-(h,s)$, so we only prove the first. Using induction (and compactness of spheres in case $r = \infty$) we may assume that $r-q \le \varepsilon$ (where $\varepsilon > 0$ is as in Definition~\ref{def:morse}). By compactness of spheres, it suffices to show that there exists a well order $\preceq$ on the vertices of $Y_{q < h \le r}$ such that the pair
 \[
 (S_{\preceq v},S_{\prec v}) \defeq \left(Y_{p \le h \le q} \cup \bigcup_{w \preceq v} \st^{(h,s)\downarrow}_{Y_{p \le h}} w \text{, } Y_{p \le h \le q} \cup \bigcup_{w \prec v} \st^{(h,s)\downarrow}_{Y_{p \le h}} w\right)
 \]
 is $k$-connected for every vertex $v\in Y_{q < h \le r}$. To this end, let $\preceq$ be any well order satisfying $v\prec v'$ whenever $s(v)<s(v')$ (this exists since $s$ takes finitely many values on vertices). Note that $S_{\preceq v}$ is obtained from $S_{\prec v}$ by coning off $S_{\prec v} \cap \partial \st v$. We claim that this intersection is precisely the boundary of $\st^{(h,s)\downarrow}_{Y_{p \le h}} v$ in $Y_{p\le h}^{(h,s)\le (h,s)(v)}$, which we will denote $B$, and which is homeomorphic to $\lk^{(h,s)\downarrow}_{Y_{p \le h}} v$ and hence $(k-1)$-connected by assumption. The inclusion $S_{\prec v} \cap \partial \st v \subseteq B$ is clear. Since $S_{\prec v} \cap \partial \st v$ is a full subcomplex of $\partial \st v$, it suffices for the converse to verify that any vertex $w$ adjacent to $v$ with $(h,s)(w) < (h,s)(v)$ lies in $S_{\prec v}$. If $h(w) < h(v)$ then $h(w) \le h(v) - \varepsilon \le r - \varepsilon \le q$, so $w \in Y_{p \le h \le q}$. Otherwise $s(w) < s(v)$ and thus $w \prec v$.
\end{proof}

\subsection{Negative properties}\label{sec:negative_properties}

When trying to disprove finiteness properties using Morse theory, one often faces the following problem: suppose the Morse function is on a contractible space and the ascending links are always $(m-1)$-connected but infinitely often not $m$-connected. One would like to say that every ascending link that is not $m$-connected cones off at least one previously non-trivial $m$-sphere, and thus there are $m$-spheres that only get coned off arbitrarily late (and hence the filtration is not essentially $m$-connected). But this argument does not work in general because it is possible that the $m$-sphere one is coning off was actually already homotopically trivial in the superlevel set, and then one is actually \emph{producing} an $(m+1)$-sphere. This second option can be excluded if one can make sure that no $(m+1)$-spheres are ever coned off (for example, if the whole contractible space is $(m+1)$-dimensional) and then the argument works. In general though, the difference between killing $m$-spheres and producing $(m+1)$-spheres is not visible locally and one has to take a more global view. The following will be useful in doing so.

\begin{observation}\label{obs:weak_bottleneck_trick}
 Let an $(m-1)$-connected affine cell complex $X$ be equipped with a Morse function $(h,s)\colon X \to \R\times\R$ and assume that all ascending links are $(m-2)$-connected. Then the filtration $(X_{t \le h})_{t \in \R}$ is essentially $(m-1)$-connected, if and only if $X_{p \le h}$ is $(m-1)$-connected for some $p$, if and only if all $X_{p' \le h}$ are $(m-1)$-connected for all $p'\le p$ (for some $p$).
\end{observation}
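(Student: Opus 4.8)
The plan is to prove the two biconditionals by a short cycle of implications among the three conditions. Write $\mathrm{(i)}$ for ``the filtration $(X_{t\le h})_t$ is essentially $(m-1)$-connected'', $\mathrm{(ii)}$ for ``$X_{p\le h}$ is $(m-1)$-connected for some $p$'', and $\mathrm{(iii)}$ for ``there is a $p$ with $X_{p'\le h}$ $(m-1)$-connected for all $p'\le p$''. I will establish $\mathrm{(iii)}\Rightarrow\mathrm{(ii)}$, $\mathrm{(ii)}\Rightarrow\mathrm{(iii)}$, $\mathrm{(ii)}\Rightarrow\mathrm{(i)}$ and $\mathrm{(i)}\Rightarrow\mathrm{(ii)}$. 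The engine behind everything is the Morse Lemma: applying its second half with $k=m-1$ (legitimate since all ascending links are $(m-2)$-connected) and $r=\infty$ gives that for any $p'\le q$ the pair $(X_{p'\le h},X_{q\le h})$ is $(m-1)$-connected, equivalently that the inclusion $X_{q\le h}\hookrightarrow X_{p'\le h}$ is a $\pi_k$-isomorphism for $k\le m-2$ and a $\pi_{m-1}$-epimorphism. Taking $p'=-\infty$, so that $X_{p'\le h}=X$, and using that $X$ is $(m-1)$-connected, this shows at once that every superlevel set $X_{q\le h}$ is $(m-2)$-connected. Hence throughout the proof the only homotopy group in question is $\pi_{m-1}$, and the statement is really about when it vanishes.

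Two of the four implications are then formal. $\mathrm{(iii)}\Rightarrow\mathrm{(ii)}$ is trivial (take $p'=p$). For $\mathrm{(ii)}\Rightarrow\mathrm{(iii)}$: if $X_{p\le h}$ is $(m-1)$-connected then for each $p'\le p$ the epimorphism $\pi_{m-1}(X_{p\le h})\twoheadrightarrow\pi_{m-1}(X_{p'\le h})$ forces $\pi_{m-1}(X_{p'\le h})=0$, and since $X_{p'\le h}$ is already $(m-2)$-connected it is $(m-1)$-connected. For $\mathrm{(ii)}\Rightarrow\mathrm{(i)}$ (using the just-proved $\mathrm{(iii)}$): given $t$, put $t'=\min\{t,p\}$; if $t\le p$ then $X_{t\le h}=X_{t'\le h}$ is $(m-1)$-connected, so every map $S^{k-1}\to X_{t\le h}$ with $k\le m$ is already null in $X_{t'\le h}$; if $t>p$ then such a map, composed with the inclusion $X_{t\le h}\hookrightarrow X_{p\le h}=X_{t'\le h}$ into an $(m-1)$-connected complex, is nullhomotopic there.

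The implication with actual content is $\mathrm{(i)}\Rightarrow\mathrm{(ii)}$, and the idea there is a ``zero meets surjective'' argument. Fix any $t$. Essential $(m-1)$-connectedness yields a $t'\le t$ such that every map $S^{m-1}\to X_{t\le h}$ is null in $X_{t'\le h}$, i.e.\ the inclusion-induced homomorphism $\pi_{m-1}(X_{t\le h})\to\pi_{m-1}(X_{t'\le h})$ is zero; but the Morse Lemma tells us this same homomorphism is surjective, so $\pi_{m-1}(X_{t'\le h})=0$, and by the $(m-2)$-connectedness established above $X_{t'\le h}$ is $(m-1)$-connected, which is $\mathrm{(ii)}$. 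I do not anticipate a real obstacle: the whole argument is soft, amounting to the long exact sequence of a pair together with this one trick. The two things to keep straight are that essential connectedness and the Morse Lemma must be applied to the \emph{same} threshold pair $t'\le t$, so that ``zero'' and ``surjective'' can be played against each other, and the routine basepoint/path-connectedness care for $\pi_{m-1}$ when $m-1\le 1$, which is harmless because every space we touch is at least $(m-2)$-connected.
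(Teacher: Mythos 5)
Your proof is correct and follows essentially the same route as the paper: both invoke the Morse Lemma to get that the inclusion $X_{q\le h}\hookrightarrow X_{p\le h}$ is a $\pi_k$-isomorphism for $k<m-1$ and a $\pi_{m-1}$-epimorphism, and both then observe that such a map is trivial on $\pi_{m-1}$ precisely when $\pi_{m-1}$ of the target vanishes. You merely unfold the paper's terse ``trivial map if and only if the homotopy groups vanish'' into an explicit cycle of implications, and add the (correct, and implicitly used) remark that taking $p'=-\infty$ shows all superlevel sets are automatically $(m-2)$-connected; no new ideas are involved.
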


\begin{proof}
 Since ascending links are $(m-2)$-connected, the Morse Lemma implies that for any $p<q$ the pairs $(X_{p\le h},X_{q \le h})$ are $(m-1)$-connected, so in particular the map $\pi_k(X_{q \le h} \into X_{p \le h})$ is an isomorphism for $k<m-1$ and surjective for $k=m-1$. Now for these $p$ and $q$, we see that this map induces the trivial map in homotopy up to dimension $m-1$ if and only if the homotopy groups vanish on $X_{p \le h}$. We conclude that the filtration is essentially $(m-1)$-connected if and only if $X_{p \le h}$ is $(m-1)$-connected for some $p \in \Z$, or equivalently all $p' \le p$.
\end{proof}


\section{Thompson's group and the Stein--Farley complex}\label{sec:group_and_space}

Thompson's group $F$ has appeared over the past decades in a variety of situations, and has proved to have many strange and interesting properties. It was the first example of a torsion-free group of infinite cohomological dimension that is of type $\F_\infty$ \cite{brown84}. Since it is of type $\F_\infty$, one can ask what its BNSR-invariants $\Sigma^m(F)$ are, for arbitrary $m$. The $m=1$ case was answered by Bieri, Neumann and Strebel in \cite{bieri87}, and the $m\ge 2$ case by Bieri, Geoghegan and Kochloukova in \cite{bieri10}. The main result of the present work is a recomputation of the $\Sigma^m(F)$, making use of a $\CAT(0)$ cube complex on which $F$ acts freely, called the \emph{Stein--Farley complex} $X$.

\subsection{The group}\label{sec:group}

The fastest definition of $F$ is via its standard infinite presentation,
\[
F=\gen{x_i, i\in\N \mid x_j x_i = x_i x_{j+1}, i<j}\text{.}
\]
One can also realize it as the group of orientation-preserving piecewise linear homeomorphisms of the interval $[0,1]$ with dyadic slopes and breakpoints. For our purposes, the most useful definition is in terms of \emph{split-merge tree diagrams}.

A \emph{split-merge tree diagram} $(T_-/T_+)$ consists of a binary tree $T_-$ of ``splits'' and a binary tree $T_+$ of ``merges'', such that $T_-$ and $T_+$ have the same number of leaves. The leaves of $T_-$ and of $T_+$ are naturally ordered left to right and we identify them. Two split-merge tree diagrams are \emph{equivalent} if they can be transformed into each other via a sequence of reductions or expansions. A \emph{reduction} is possible if $T_-$ and $T_+$ contain terminal carets whose leaves coincide. The reduction consists of deleting both of these carets. An expansion is the inverse of a reduction. We denote the equivalence class of $(T_-/T_+)$ by $[T_-/T_+]$.

These $[T_-/T_+]$ are the elements of $F$. The multiplication, say of elements $[T_-/T_+]$ and $[U_-/U_+]$, written $[T_-/T_+] \cdot [U_-/U_+]$, is defined as follows. First note that $T_+$ and $U_-$ admit a binary tree $S$ that contains them both, so using expansions we have $[T_-/T_+] = [\hat{T}_-/S]$ and $[U_-/U_+] = [S/\hat{U}_+]$ for some $\hat{T}_-$ and $\hat{U}_+$. Then we can define
\[
[T_-/T_+] \cdot [U_-/U_+] \defeq [\hat{T}_-/S] \cdot [S/\hat{U}_+] = [\hat{T}_-/\hat{U}_+] \text{.}
\]
This multiplication is well defined, and it turns out that the resulting structure is a group, namely $F$. More information on the background of $F$ can be found in \cite{cannon96}.

\subsection{The Stein--Farley complex}\label{sec:stein_farley}

We now recall the Stein--Farley cube complex $X$ on which $F$ acts. This was first constructed by Stein in \cite{stein92}, and shown to be $\CAT(0)$ by Farley \cite{farley03}. We begin by generalizing split-merge tree diagrams to allow for \emph{forests}: A \emph{split-merge diagram} $(E_-/E_+)$ consists of a binary forest $E_-$ of ``splits'' and a binary forest $E_+$ of ``merges'' such that $E_-$ and $E_+$ have the same number of leaves. By a \emph{binary forest} we mean a finite sequence of rooted binary trees. Thus the leaves of $E_-$ and of $E_+$ are naturally ordered left to right and we identify them. As in Figure~\ref{fig:split-merge_diagram} we will usually draw $E_+$ upside down. We call the roots of $E_-$ \emph{heads} and the roots of $E_+$ \emph{feet} of the diagram.

\begin{figure}
\centering
\includegraphics{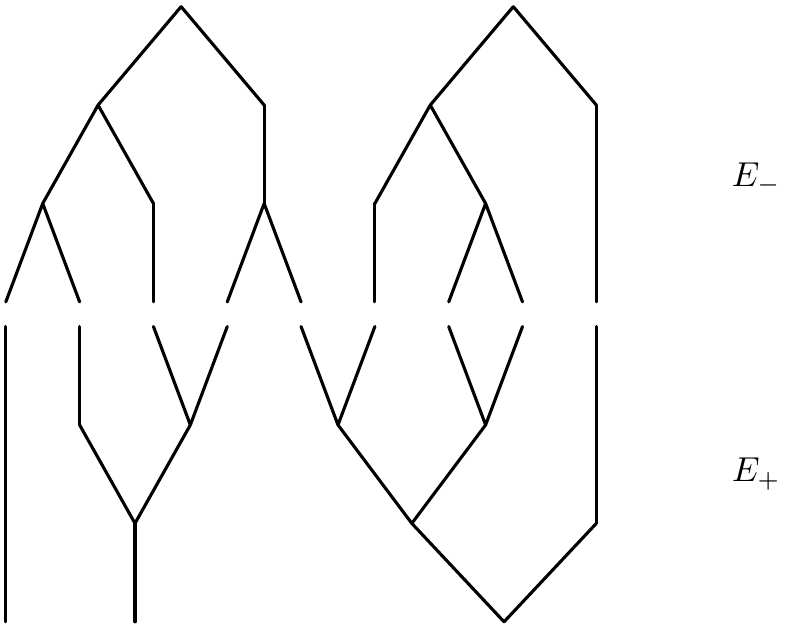}
\caption{A split-merge diagram with $2$ heads and $3$ feet. The diagram is not reduced because the $7$th and $8$th leaf of $E_-$ and $E_+$ both lie in terminal carets.}
\label{fig:split-merge_diagram}
\end{figure}

Just like the tree case, we have a notion of equivalence using reduction and expansion, defined the same way. It is easy to see that every equivalence class contains a unique reduced split-merge diagram: just note that if a diagram $(E_-/E_+)$ has two possible reductions that lead to diagrams $(D^1_-/D^1_+)$ and $(D^2_-/D^2_+)$, both reductions can be performed at once to give a diagram $(C_-/C_+)$ that is a common reduction of $(D^1_-/D^1_+)$ and $(D^2_-/D^2_+)$ (i.e., reduction is confluent in a very strong way). We will sometimes abuse language and speak of a split-merge diagram, when really we are talking about equivalence classes of split-merge diagrams.

Let $\calP$ be the set of equivalence classes of split-merge diagrams. This set has two important pieces of structure.

\textbf{Groupoid.} The first is a groupoid structure. If $[E_-/E_+]$ has $k$ heads and $\ell$ feet, and $[D_-/D_+]$ has $\ell$ heads and $m$ feet, then we will define their product, written $[E_-/E_+] \cdot [D_-/D_+]$, which is a split-merge diagram with $k$ heads and $m$ feet. Like in $F$, with split-merge tree diagrams, one way to define the product is to first note that $E_+$ and $D_-$ admit a binary forest $C$ that contains them both. Using expansions one can thus write $[E_-/E_+] = [\hat{E}_-/C]$ and $[D_-/D_+] = [C/\hat{D}_+]$ for some $\hat{E}_-$ and $\hat{D}_+$. Now we define
\[
[E_-/E_+] \cdot [D_-/D_+] \defeq [\hat{E}_-/C] \cdot [C/\hat{D}_+] = [\hat{E}_-/\hat{D}_+] \text{.}
\]
There is also a more visual description. The product can be obtained by stacking the diagram $(E_-/E_+)$ on top of $(D_-/D_+)$ and then applying a sequence of operations that are dual to expansion/reduction, namely if a merge is immediately followed by a split, both can be removed. Successively applying this operation eventually leads to a split-merge diagram that represents $[E_-/E_+]\cdot [D_-/D_+]$.

For this multiplication operation to give $\calP$ a groupoid structure, we need identities and inverses. A forest in which all trees are trivial is called a \emph{trivial forest}. The trivial forest with $n$ trees is denoted $\id_n$. We consider binary forests as split-merge diagrams via the embedding $E \mapsto [E/\id_m]$ where $m$ is the number of leaves of $E$. In particular for every $m$ we have the element $[\id_m/\id_m]$, and this is clearly a multiplicative identity for split-merge diagrams against which it can be multiplied. Inverses are straightforward: the (left and right) inverse of $[E_-/E_+]$ is $[E_+/E_-]$.

\begin{observation}
 $\calP$ is a groupoid with the above multiplication.
\end{observation}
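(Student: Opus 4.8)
The plan is to verify the four groupoid axioms — well-definedness of the product, associativity, existence of two-sided identities, existence of two-sided inverses — with the first two being the substantive ones. I would treat well-definedness first, since it underpins everything else. Given composable $[E_-/E_+]$ (with $\ell$ feet) and $[D_-/D_+]$ (with $\ell$ heads), and two binary forests $C,C'$ each containing $E_+$ and $D_-$, I would pass to their pointwise union $C''$ (root by root, take the union of the corresponding subtrees): since $C$ and $C'$ have the same number of roots, $C''$ is again a binary forest, and it is an expansion of both $C$ and $C'$. Writing $[E_-/E_+]=[\hat E_-/C]$ and $[D_-/D_+]=[C/\hat D_+]$, expanding $C$ to $C''$ grafts the \emph{same} forest onto $\hat E_-$ and onto $\hat D_+$, so the candidate products $[\hat E_-/\hat D_+]$ computed from $C$ and from $C''$ differ by an expansion and hence agree in $\calP$; likewise for $C'$ versus $C''$. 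This proves the product is well defined, and it records the key \emph{compatibility} fact that the product is unchanged if one first expands either factor.

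For associativity of $([E_-/E_+]\cdot[D_-/D_+])\cdot[G_-/G_+]$ I would use the compatibility fact to reduce to the ``already stacked'' situation in which, after suitable expansions of all three factors, the diagrams read $[A/B]$, $[B/C]$, $[C/D]$ for binary forests $A,B,C,D$ — first absorb a common expansion of the feet of the first and the heads of the second, then a compatible common expansion of the feet of the (new) second and the heads of the third. In that situation both bracketings compute at once to $[A/D]$. Associativity therefore rests entirely on the bookkeeping of these nested expansions, and making the three products simultaneously reduced without an index slip is the step I expect to be the main obstacle.

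Identities and inverses are then quick. Directly from the definition (take the common forest to be $D_-$, resp.\ $D_+$), $[\id_m/\id_m]\cdot[D_-/D_+]=[D_-/D_+]$ and $[D_-/D_+]\cdot[\id_\ell/\id_\ell]=[D_-/D_+]$, so the $[\id_n/\id_n]$ are two-sided identities. For inverses, taking the common forest $E_+$ gives $[E_-/E_+]\cdot[E_+/E_-]=[E_-/E_-]$, and $[E_-/E_-]$ reduces to $[\id_k/\id_k]$ by induction on the number of carets of $E_-$ (a nontrivial binary forest has a terminal caret, which sits in the numerator and the denominator simultaneously, so it can be deleted from both); symmetrically $[E_+/E_-]\cdot[E_-/E_+]=[\id_\ell/\id_\ell]$. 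Hence $[E_+/E_-]$ is the two-sided inverse of $[E_-/E_+]$, and $\calP$ is a groupoid.

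A cleaner route, which I would likely take in the write-up, avoids the associativity bookkeeping by realizing $\calP$ concretely. Reading a binary forest with $k$ roots as a standard dyadic subdivision of $[0,k]$, identify a split-merge diagram with $k$ heads and $\ell$ feet with the orientation-preserving PL homeomorphism $[0,k]\to[0,\ell]$ that sends the $i$-th leaf subinterval of $E_-$ affinely onto the $i$-th leaf subinterval of $E_+$. An expansion subdivides a leaf interval of $E_-$ together with the corresponding one of $E_+$ into halves and so does not change this homeomorphism, hence the assignment descends to equivalence classes; the stacking description of the product then shows it corresponds exactly to composition of homeomorphisms. Associativity, identities and inverses are now automatic, and the only remaining point is that the assignment is a bijection onto the PL homeomorphisms of this type with dyadic breakpoints and slopes a power of $2$ — which follows from the existence of a coarsest dyadic subdivision adapting source and target to such a map, exactly as in the classical identification of $F$ with a group of PL homeomorphisms of $[0,1]$.
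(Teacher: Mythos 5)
The paper states this Observation without proof, treating it as immediate from the construction; your write-up supplies the missing verification, and it is correct. Your first route is exactly the argument the paper is implicitly relying on: well-definedness by passing to a common refinement $C''$ (using that binary forests on a fixed number of roots form a join-semilattice under expansion), the compatibility remark that the product is unchanged by expanding either factor (which is what makes the product descend to equivalence classes), associativity by arranging, after compatible expansions, the three factors as $[A/B]$, $[B/C]$, $[C/D]$ so both bracketings give $[A/D]$, and identities and inverses by direct reduction of $[E_-/E_-]$ to $[\id_k/\id_k]$. Your alternative via PL homeomorphisms is a genuinely different and arguably cleaner route: reading a binary forest with $k$ roots as a dyadic subdivision of $[0,k]$ identifies $\calP$ with a subgroupoid of the groupoid of PL homeomorphisms $[0,k]\to[0,\ell]$ with dyadic breakpoints and slopes powers of $2$, and stacking becomes honest composition, so associativity, identities and inverses are automatic. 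What that route buys is freedom from the expansion bookkeeping; what it costs is the (standard but nontrivial) check that the assignment from equivalence classes of diagrams to PL maps is well defined and bijective, which is the same finite-common-refinement argument in disguise. Either approach is a complete proof of the Observation.
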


Since $F$ lives in $\calP$ as the set of elements with one head and one foot, we have an action of $F$, by multiplication, on the subset $\calP_1$ of elements with one head.

The second piece of structure on $\calP$ is an order relation.

\textbf{Poset.} The order is defined by: $[E_-/E_+] \le [D_-/D_+]$ whenever there is a binary forest $C$ such that $[E_-/E_+] \cdot C = [D_-/D_+]$ (recall that we identified the binary forest $C$ with a split-merge diagram $[C/\id_m]$). In words, $[D_-/D_+]$ is greater than $[E_-/E_+]$ if it can be obtained from it by splitting feet. It is straightforward to check that $\le$ is a partial order. The subset $\calP_1$ of elements with one head is a subposet.

The topological realization of the poset $(\calP_1,\le)$ is a simplicial complex on which $F$ acts, and the \emph{Stein--Farley complex} $X$ is a certain invariant subcomplex with a natural cubical structure. Given split-merge diagrams $[E_-/E_+] \le [E_-/E_+] \cdot E$, we write $[E_-/E_+] \preceq [E_-/E_+] \cdot E$ if $E$ is an \emph{elementary forest}. This means that each of its trees is either trivial, or a single caret. Now $X$ is defined to be the subcomplex of $|\calP_1|$ consisting of those chains $x_0<\cdots<x_k$ with $x_i \preceq x_j$ for all $i\le j$. The cubical structure is given by intervals: given $x\preceq y$, the interval $[x,y]\defeq\{z\mid x\le z\le y\}$ is a Boolean lattice of dimension $n$, and so the simplices in $[x,y]$ glom together into an $n$-cube. Here $n$ is the number of carets in $E$, with $y=x\cdot E$.

\begin{theorem}\cite{farley03}
 $X$ is a $\CAT(0)$ cube complex.
\end{theorem}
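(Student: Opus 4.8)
The plan is to verify Gromov's link condition: a cube complex is $\CAT(0)$ if and only if it is simply connected and the link of every one of its vertices is a flag simplicial complex (for cube complexes this holds with no restriction on dimension). So there are three things to establish --- that $X$ is genuinely a cube complex, that its vertex links are flag, and that $X$ is simply connected.

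That $X$ is a cube complex is essentially already recorded in the paragraph preceding the statement: for $x \preceq y$ with $y = x\cdot E$, $E$ elementary with $n$ carets, the interval $[x,y]$ in $(\calP_1,\le)$ is the Boolean lattice on the $n$ carets of $E$, so the chains of $|\calP_1|$ lying inside it assemble into an $n$-cube. What remains is to see that these cubes are glued face to face, and for this I would check that $\preceq$ passes to subintervals --- if $x\preceq y$ and $x\le z\le w\le y$ then $z\preceq w$ --- which makes every face of a cube of $X$ again a cube of $X$ and forces the intersection of two cubes to be a common face; both are immediate from the combinatorics of the order on $\calP_1$.

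For the link condition, the first step is to set up a workable combinatorial model of $\lk_X v$ for a vertex $v=[E_-/E_+]\in\calP_1$ --- this is exactly the purpose of Section~\ref{sec:links_subcomplexes}. The vertices of $\lk_X v$ are the elementary neighbours of $v$, and these split into an \emph{ascending} family, indexed by the feet of $v$ (splitting one foot once), and a \emph{descending} family, indexed by the adjacent pairs of feet of $v$ (re-merging such a pair). Since any set of feet may be split simultaneously, the ascending family spans a full simplex; an ascending vertex ``split foot $i$'' and a descending vertex ``re-merge $\{j,j+1\}$'' are joined precisely when $i\notin\{j,j+1\}$, so the two families do \emph{not} simply form a join. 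Flagness --- that a pairwise compatible set of elementary neighbours of $v$ spans a cube --- then follows because the moves ``split a foot'' and ``re-merge an adjacent pair'' are each supported on a small block of feet, a pairwise compatible collection is supported on essentially disjoint blocks, and such a collection can therefore be performed at once, producing an honest cube of $X$ through $v$ that realizes the corresponding simplex. I expect this step to be the main obstacle: the model must be made precise enough to read off exactly which elementary neighbours of $v$ are compatible.

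For simple connectivity I would show that $X$ is in fact contractible, by running a Bestvina--Brady Morse argument on $X$ using a Morse function (in the sense of Definition~\ref{def:morse}) recording the complexity --- essentially the number of leaves --- of the reduced diagram at a vertex, with a secondary height to break ties; this is how Stein originally proceeded in \cite{stein92}. Filtering $X$ by sublevel sets of this function, one shows that the pertinent descending links are contractible, so that the inclusions between consecutive pieces of the filtration are homotopy equivalences, and since the initial piece is contractible so is $X$. Combining contractibility of $X$ with flagness of all its vertex links, Gromov's criterion gives that $X$ is a $\CAT(0)$ cube complex.
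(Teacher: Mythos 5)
This theorem is not proved in the paper at all; it is cited as Farley's result \cite{farley03}, and the paper's subsequent arguments (e.g., Lemma~\ref{lem:sf_interval_connectivity}) take it as input. So there is no internal proof to compare against. That said, your outline is essentially Farley's argument (and the standard one for cubical complexes): establish the cube-complex structure, verify the Gromov link condition, and prove simple connectivity. Two substantive remarks on the details.

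First, on flagness: once you have the identification $\lk_X v \cong \gmatch(L_n)$ (Observation~\ref{obs:vertex_link}), flagness is immediate rather than ``the main obstacle.'' A simplex of $\gmatch(\Delta)$ is by definition a set of \emph{pairwise} disjoint simplices of $\Delta$, and pairwise disjointness is precisely a flag condition; so every general matching complex is flag. Your ``split a foot / re-merge an adjacent pair, compatible iff disjoint'' description is exactly this model in disguise, so the step is fine, but you should lean on the $\gmatch(L_n)$ picture rather than on an ad hoc ``disjoint blocks'' heuristic.

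Second, on contractibility: be careful about circularity. The paper's Lemma~\ref{lem:sf_interval_connectivity} explicitly \emph{uses} contractibility of $X$ as a hypothesis, so it cannot be invoked here. The Morse function you want is the number-of-feet function $f$ (not ``number of leaves of the reduced diagram,'' which is a different and less convenient quantity), filtering by $X_{f\le q}$. Descending links are matching complexes $\match(L_{f(x)})$, whose connectivity grows without bound (Lemma~\ref{lem:linear_matching}), so the inclusions $X_{f\le q}\hookrightarrow X_{f\le q+1}$ are increasingly highly connected and $X$ is weakly contractible, hence contractible by Whitehead. Stein's original argument in \cite{stein92} is actually different: she observes that the poset $\calP_1$ is directed (so $|\calP_1|$ is contractible) and exhibits $X$ as a deformation retract of $|\calP_1|$; either route works.

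Finally, since $X$ is infinite-dimensional, the statement ``simply connected $+$ flag links $\Rightarrow$ $\CAT(0)$'' is not just Gromov's finite-dimensional lemma; one also needs that the cubical path metric is complete and geodesic in this generality. This is handled in \cite{farley03} (and in later work of Leary in full generality), and your write-up should cite it rather than assert it without comment.
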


Note that the action of $F$ on $X$ is free. It is free on vertices since the action is just by multiplication in a groupoid. Also, it is free on cubes since if a group element stabilizes $[x,y]$ it must fix $x$ and $y$.

Every cube $\sigma$ has a unique vertex $x$ with fewest feet and a unique vertex $y$ with most feet. There is a unique elementary forest $E$ with $y=x\cdot E$, and the other vertices of $\sigma$ are obtained by multiplying $x$ by subforests of $E$. We introduce some notation for this: suppose $x$ has $k$ feet and $E=(A_1,\dots,A_k)$, where each $A_i$ is either $\elnothing$ or $\elsplit$; here $\elnothing$ is the trivial tree and $\elsplit$ is the tree with one caret. Let $\Phi$ be the set of subforests of $E$, written $\Phi \defeq \gen{A_1,\ldots,A_k}$. Then the vertex set of $\sigma$ is precisely $x\Phi$.

If we take a different vertex $z$ of $\sigma$ as ``basepoint'', then we also have to allow merges. Say $z$ has $m > k$ feet. Then we can write $\sigma = z\Psi$ where $\Psi$ is now of the form $\gen{A_1,\ldots,A_m}$ where each $A_i$ is either $\elnothing$, $\elsplit$ or $\elmerge$. Here $\elmerge$ is the inverse of the tree with one caret (so an upside-down caret). The tuple $(A_1,\ldots,A_m)$ is now to be thought of as a split-merge diagram, and the set $\Psi$ as the set of all split-merge diagrams that can be obtained by removing some of the carets. As before, the vertex set of $\sigma$ is $z\Psi$.

\subsection{Characters and affine character height functions}\label{sec:characters}

It is well known that $\Hom(F,\R)\cong \R^2$. A standard choice of basis is $\{\chi_0,\chi_1\}$, where $\chi_0$ and $\chi_1$ are most easily described when viewing elements of $F$ as piecewise linear homeomorphisms of $[0,1]$ with dyadic slopes and breakpoints. Then $\chi_0(f)\defeq \log_2(f'(0))$ and $\chi_1(f)\defeq \log_2(f'(1))$. Here the derivatives are taken on the right for $\chi_0$ and the left for $\chi_1$. So any character of $F$ is of the form $\chi=a\chi_0+b\chi_1$ for $a,b\in\R$.

For a tree $T$ we define $L(T)$ to be the number of carets above the leftmost leaf of $T$, and $R(T)$ to be the number of carets above the rightmost leaf. Then the characters $\chi_i\colon F\to\Z$ can be expressed in terms of split-merge tree diagrams as
\begin{equation}
\label{eq:chi_i}
\chi_0([T_-/T_+]) \defeq L(T_+) - L(T_-) \text{ and } \chi_1([T_-/T_+]) \defeq R(T_+) - R(T_-)\text{.}
\end{equation}
It is readily checked that~\eqref{eq:chi_i} is invariant under the equivalence relation on split-merge tree diagrams, and thus gives well defined maps. Replacing binary trees by binary forests, the above definition generalizes verbatim to arbitrary split-merge diagrams. In particular, the $\chi_i$ can now be evaluated on vertices of $X$. Moreover, any character $\chi$ on $F$ can be written as a linear combination
\begin{equation}
\label{eq:character_linear_combination}
\chi = a \chi_0 + b\chi_1
\end{equation}
and thus extends to arbitrary split-merge diagrams by interpreting \eqref{eq:character_linear_combination} as a linear combination of the extended characters.

Since $\chi$ will be our height function, we need the following.

\begin{lemma}\label{lem:affine_extend}
 Any character $\chi$ extends to an affine map $\chi \colon X \to \R$.
\end{lemma}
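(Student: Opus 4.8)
The plan is to show that $\chi$ is already affine on each cube of $X$, since then the maps agree on overlaps (faces of cubes are cubes) and glue to an affine map on all of $X$. So fix a cube $\sigma$ with bottom vertex $x$ having $k$ feet, and write the vertex set as $x\Phi$ with $\Phi = \gen{A_1,\dots,A_k}$ where each $A_i \in \{\elnothing, \elsplit\}$. A generic vertex of $\sigma$ is then $x \cdot E_S$ where $S \subseteq \{i : A_i = \elsplit\}$ and $E_S$ is the subforest that splits exactly the feet indexed by $S$. Identifying $\sigma$ with the standard cube $[0,1]^d$ (with $d$ the number of split-carets of $E$), the coordinate-$i$ direction corresponds to the caret over the $i$th (split-eligible) foot. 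The claim is that $\chi$, as a function on the vertices $x\Phi$, is the restriction of an affine function on $[0,1]^d$; equivalently, it suffices to check that $\chi(x \cdot E_S)$ depends on $S$ additively, i.e.\ $\chi(x\cdot E_S) = \chi(x) + \sum_{i \in S} c_i$ for constants $c_i$.

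First I would reduce to computing $\chi_0$ and $\chi_1$ separately, by linearity \eqref{eq:character_linear_combination}, and reduce further to computing how $L$ and $R$ change when we split a single foot. Since $\chi_0([E_-/E_+]) = L(E_+) - L(E_-)$ and $\chi_1$ similarly with $R$, and splitting the $i$th foot replaces $E_+$ by a forest $E_+'$ obtained by attaching a single caret below the $i$th leaf of $E_+$ (leaving $E_-$ unchanged relative to a common refinement — here one must be a little careful and pass to reduced representatives, or better, track things at the level of forests before reduction), the key observation is: attaching a caret below the leftmost leaf of $E_+$ increases $L(E_+)$ by $1$ and otherwise $L$ is unchanged; attaching a caret anywhere strictly to the right of the leftmost leaf leaves $L(E_+)$ unchanged. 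Symmetrically for $R$ and the rightmost leaf. Hence splitting foot $i$ changes $\chi_0$ by $1$ if $i=1$ (the leftmost foot) and by $0$ otherwise, and changes $\chi_1$ by $1$ if $i=k$ (the rightmost foot) and by $0$ otherwise — these increments are independent of which other feet have been split, because after splitting foot $1$ the new leftmost leaf sits over a caret, and splitting any other foot still does not touch that leftmost caret (and likewise on the right). This gives the additivity $\chi(x\cdot E_S) = \chi(x) + c_1\,[1\in S] + c_k\,[k\in S]$ with $c_1 = a$, $c_k = b$ (and all other $c_i = 0$), which is manifestly the restriction of an affine function on $[0,1]^d$. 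The same computation from any other basepoint vertex $z$ of $\sigma$ (now allowing $\elmerge$'s, per the description $\sigma = z\Psi$) gives a consistent affine function, since affine functions on a cube are determined by their vertex values.

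The main obstacle, as usual with tree-diagram computations, is bookkeeping: to make "splitting the $i$th foot" well-defined and to see the increments are constant over the cube, one should work with an unreduced common forest model for all vertices of $\sigma$ simultaneously (e.g.\ express every vertex as $x \cdot E_S$ for a fixed elementary forest $E$), rather than reducing each vertex independently — reduction could in principle cancel carets near the left or right edge and muddy the count. Once the model is fixed, the argument is the local combinatorial fact above about how $L$ and $R$ respond to adding a caret, which is immediate from the definitions of $L(T)$ and $R(T)$. I would also remark that since $X$ is $\CAT(0)$ and in particular simply connected, one could alternatively define $\chi$ on $X$ directly via Proposition~\ref{prop:bnsr_def} and then merely verify affineness on cubes; but the hands-on approach above is cleaner and makes the height function explicit.
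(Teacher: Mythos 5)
Your strategy---parameterize the vertices of a cube as $x\cdot E_S$ for $S$ a set of feet of the bottom vertex $x$, and verify additivity $\chi(x\cdot E_S)=\chi(x)+\sum_{i\in S}c_i$---is a valid route to the lemma and is organized differently from the paper's proof, which introduces the auxiliary Lemma~\ref{lem:affine_on_cubes} (a vertex function on $[0,1]^n$ extends affinely as soon as it does so on every $2$-face) and then checks only squares. Your version avoids the helper lemma at the cost of having to argue additivity over the whole cube at once; the paper outsources that induction to the cube lemma and reduces the diagram combinatorics to a two-line observation. Both work.

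The problem is that the local combinatorial step on which your additivity rests is misstated. Splitting the $i$th foot of $[E_-/E_+]$ is \emph{not} ``attaching a caret below the $i$th leaf of $E_+$, leaving $E_-$ unchanged'': the feet are the \emph{roots} of $E_+$, not its leaves (the leaves of $E_+$ are the interior vertices identified with the leaves of $E_-$), and adding a caret at a leaf of $E_+$ together with the matching caret on $E_-$ is precisely an \emph{expansion}, which does not change the element at all, while doing it to $E_+$ alone does not even give a split-merge diagram. What actually happens when you multiply on the right by the elementary forest with a caret at root $i$ depends on the representative: if the $i$th tree of $E_+$ is non-trivial, the new split caret cancels the merge caret at that root, so $E_+$ \emph{loses} a caret; if the $i$th tree of $E_+$ is trivial, the split migrates through and becomes a new caret under the corresponding leaf of $E_-$. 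Carrying this out shows $\chi_0$ \emph{decreases} by $1$ when $i=1$ and is unchanged for $1<i$, with the symmetric statement for $\chi_1$ at the last foot---opposite sign to the one you wrote (harmless for affineness), but the mechanism you describe does not produce these increments, so the justification of the $c_i$ does not go through as written even though the conclusion is correct. This case analysis is the whole content of the lemma and should be done carefully; it also handles the independence-of-$S$ point without hand-waving, because it shows that any interior split changes neither $L$ nor $R$ of either side of the reduced representative. Finally, your closing remark about defining $\chi$ via Proposition~\ref{prop:bnsr_def} and then ``merely verifying affineness'' is circular: that proposition asserts only the existence of a continuous equivariant height function, and affineness on cubes is exactly what must be proved.
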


\begin{proof}
 It suffices to show that $\chi_0$, $\chi_1$ can be affinely extended. By symmetry it suffices to treat $\chi_0$. Let $\square_2=v \Phi$ be a square, say $\Phi=\gen{A_1,\dots,A_k}$, with exactly two $A_i$ being $\elsplit$ and all others being $\elnothing$. Say $A_i=A_j=\elsplit$ for $i<j$. Now either $i > 1$ and $\chi_0$ is constant on $\square_2$, or $i = 1$ and $\chi_0$ is affine-times-constant on $\square_2$. We conclude using Lemma~\ref{lem:affine_on_cubes} below.
\end{proof}

\begin{lemma}\label{lem:affine_on_cubes}
 A map $\varphi \colon \{0,1\}^n \to \R$ can be affinely extended to the cube $[0,1]^n$ if it can be affinely extended to its $2$-faces.
\end{lemma}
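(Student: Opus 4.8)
The plan is to induct on $n$. The cases $n\le 2$ need no argument: for $n\le 1$ linear interpolation does the job and there are no $2$-faces, while for $n=2$ the whole square is its only $2$-face, so the hypothesis is literally the conclusion. So assume $n\ge 3$ and that the lemma holds in dimension $n-1$.

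First I would restrict to the two facets $\{x_n=0\}$ and $\{x_n=1\}$, each a copy of $[0,1]^{n-1}$. Every $2$-face of such a facet is also a $2$-face of $[0,1]^n$, so $\varphi$ restricted to (the vertices of) each facet again satisfies the hypothesis; by induction it extends to affine maps
\[
x\mapsto a_0+\sum_{i<n}b_i^0x_i\quad\text{on }\{x_n=0\},\qquad x\mapsto a_1+\sum_{i<n}b_i^1x_i\quad\text{on }\{x_n=1\}.
\]
Interpolating these linearly in $x_n$ gives a map $\psi\colon[0,1]^n\to\R$,
\[
\psi(x)\defeq (1-x_n)\Bigl(a_0+\sum_{i<n}b_i^0x_i\Bigr)+x_n\Bigl(a_1+\sum_{i<n}b_i^1x_i\Bigr),
\]
which agrees with $\varphi$ on $\{0,1\}^n$. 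Expanding, $\psi$ is affine if and only if $b_i^0=b_i^1$ for every $i<n$ (the potentially offending monomials are the $x_ix_n$).

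To secure these equalities I would, for each $i<n$, apply the hypothesis to the $2$-face spanned by the coordinate directions $x_i$ and $x_n$ with all remaining coordinates set to $0$. A bilinear map on $[0,1]^2$ with corner values $p_{00},p_{10},p_{01},p_{11}$ is affine exactly when $p_{00}+p_{11}=p_{10}+p_{01}$, so affine-extendability of $\varphi$ on that square says precisely that $a_0+(a_1+b_i^1)=(a_0+b_i^0)+a_1$, i.e.\ $b_i^0=b_i^1$. Hence $\psi$ is affine and the induction closes.

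The only step with any real content is this last one: one must pick, for each slope discrepancy $b_i^0-b_i^1$ that has to be annihilated, exactly the $2$-face of the cube that sees it, and recognize that affine-extendability on a square amounts to the single scalar identity above (equivalently, vanishing of the coefficient of $x_ix_n$ in the bilinear interpolant). Alternatively, one could phrase the whole argument in terms of the unique multilinear extension $\sum_{S}c_S\prod_{i\in S}x_i$ of $\varphi$ and show, by induction on $\abs{S}$, that the hypothesis forces $c_S=0$ whenever $\abs{S}\ge 2$; but the inductive reduction to facets keeps the bookkeeping lighter.
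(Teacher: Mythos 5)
Your proof is correct, but it uses a different inductive decomposition than the paper's. The paper fixes the unique affine map $\tilde\varphi$ determined by the values of $\varphi$ at the origin and the standard basis vectors, and then shows $\varphi(v)=\tilde\varphi(v)$ for every vertex $v$ by induction on the number of coordinates of $v$ equal to $1$: if $v_i=v_j=1$ then the $2$-face with vertices $v$, $v-e_i$, $v-e_j$, $v-e_i-e_j$ pins down $\varphi(v)$ once the other three values are known. You instead induct on the dimension $n$, slice $[0,1]^n$ into the facets $\{x_n=0\}$ and $\{x_n=1\}$, extend on each by the inductive hypothesis, and interpolate linearly in $x_n$; affineness of the interpolant is then forced by the $n-1$ axis $2$-faces through the origin spanned by $e_i$ and $e_n$. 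Both arguments are short and elementary. The paper's exhibits the would-be extension up front and needs no coefficient bookkeeping, whereas yours makes the obstruction (the $x_ix_n$ cross terms) explicit and identifies exactly which $2$-face kills each one. Your sketched alternative using the multilinear extension $\sum_S c_S\prod_{i\in S}x_i$ is actually the closest in spirit to the paper's proof, since the weight induction there is essentially a vertex-by-vertex verification that every $c_S$ with $\abs{S}\ge 2$ vanishes.
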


\begin{proof}
 The values of $\varphi$ on the zero vector and on the standard basis vectors define a unique affine map $\tilde{\varphi}$. The goal is therefore to show that $\tilde{\varphi}$ coincides with $\varphi$ on all the other vertices of $[0,1]^n$. This is proved for $v \in \{0,1\}^n$ by induction on the number of entries in $v$ equal to $1$. Let $v = (v_i)_{1 \le i \le n}$ with $v_i = v_j = 1$ for some $i \ne j$. We know by induction that $\varphi(w) = \tilde{\varphi}(w)$ for the three vertices $w$ obtained from $v$ by setting to $0$ the entries with index $i$ or $j$ or both. But these three vertices together with $v$ span a $2$-face, and so $\tilde{\varphi}(v)$ is the value of the (unique) affine extension of $\varphi$ to that $2$-face. Thus $\varphi(v) = \tilde{\varphi}(v)$.
\end{proof}

These extended characters $\chi$ will be our height functions. Our secondary height will be given by the number of feet function or its negative.

\begin{observation}\label{obs:feet_affine}
 There is a map $f \colon X \to \R$ that is affine on cubes and assigns to any vertex its number of feet.\qed
\end{observation}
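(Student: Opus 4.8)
The plan is to mimic the proof of Lemma~\ref{lem:affine_extend}: reduce (via Lemma~\ref{lem:affine_on_cubes}, if one likes) to checking that the number-of-feet function extends affinely to every $2$-face of $X$, and then observe that on a cube it is literally a constant plus a sum of coordinate functions.

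First I would record that the number of feet is well defined on vertices of $X$. A vertex is an equivalence class of split-merge diagrams $[E_-/E_+]$ with one head, and a reduction or expansion only deletes or adjoins terminal carets, changing the number of leaves but never the number of roots of $E_+$ — that is, never the number of feet. Write $f(v)$ for this value. Next comes the key computation. Fix a cube $\sigma$ and let $x$ be its vertex with fewest feet, so that $\sigma = x\Phi$ with $\Phi = \gen{A_1,\dots,A_k}$, exactly $n = \dim\sigma$ of the $A_i$ equal to $\elsplit$ (say at positions $i_1 < \cdots < i_n$) and the rest equal to $\elnothing$. Every vertex of $\sigma$ has the form $x\cdot E'$ for a subforest $E'$ of $E = (A_1,\dots,A_k)$; since adjoining a single caret to a forest splits one leaf into two and hence adds exactly one foot, we get $f(x\cdot E') = f(x) + (\text{number of carets of }E')$. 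Identifying $\sigma$ with $[0,1]^n$ so that the $j$-th coordinate records whether the caret $A_{i_j}$ is present, the vertex values of $f$ are exactly the restriction to $\{0,1\}^n$ of the affine map $t = (t_1,\dots,t_n) \mapsto f(x) + \sum_{j=1}^n t_j$. In particular $f$ extends affinely to $\sigma$. (If one prefers to go through Lemma~\ref{lem:affine_on_cubes}, it suffices to see this for $n \le 2$: on an edge $f$ jumps by $1$, and on a square with two splits the four vertex values are $f(x)$, $f(x)+1$, $f(x)+1$, $f(x)+2$, which is visibly affine.)

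Finally I would note that these cellwise affine extensions are mutually compatible and therefore assemble into a continuous map $f \colon X \to \R$: an affine function on a euclidean polytope is determined by its values at the vertices, so on any face of $\sigma$ the extension constructed above restricts to the unique affine extension determined by the vertex values of that face, which is precisely the extension constructed for that face. I do not anticipate a genuine obstacle; the only points requiring minor care are the well-definedness of $f$ on vertices of $X$ (independence of the representing diagram) and the bookkeeping that one caret contributes exactly one foot, both of which follow immediately from the definitions in Section~\ref{sec:stein_farley}.
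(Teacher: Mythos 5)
Your proof is correct. The paper asserts this observation without proof (note the \qed in the statement itself), treating it as immediate, and your verification --- well-definedness on vertices, the computation $f(x\cdot E') = f(x) + (\text{number of carets of } E')$ identifying $f$ on a cube with a constant plus a sum of coordinate functions, and compatibility across faces --- is exactly the standard filling-in of that claim.
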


Since our definition of Morse function required the secondary height function to take only finitely many values on vertices, for the next proposition we must restrict to subcomplexes of the form $X_{p\le f\le q}$. This is the full subcomplex supported on those vertices $v$ with $p\le f(v)\le q$.

\begin{proposition}\label{prop:char_morse}
 Let $\chi$ be a character. The pair $(\chi,f)$, as well as the pair $(\chi,-f)$, is a Morse function on $X_{p\le f\le q}$, for any $p\le q<\infty$.
\end{proposition}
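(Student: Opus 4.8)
The plan is to verify the three requirements of Definition~\ref{def:morse} for the pair $(\chi, f)$ on $X_{p \le f \le q}$, and then to observe that $(\chi, -f)$ is handled by exactly the same argument. Two of the requirements are immediate from what precedes: $\chi$ is affine on cells by Lemma~\ref{lem:affine_extend}, $f$ (hence $-f$) is affine on cells by Observation~\ref{obs:feet_affine}, and $f$ (hence $-f$) takes only finitely many values on the vertices of $X_{p \le f \le q}$, since those values are integers lying in $[p,q]$. Thus the only substantive point is to exhibit a single $\varepsilon > 0$ such that every pair of adjacent vertices $v, w$ of $X_{p \le f \le q}$ either satisfies $\abs{\chi(v) - \chi(w)} \ge \varepsilon$, or else satisfies $\chi(v) = \chi(w)$ and $f(v) \ne f(w)$. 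Since $X_{p \le f \le q}$ is a full subcomplex of $X$, such a $v, w$ span an edge of $X$, so it suffices to find one $\varepsilon$ that works for all edges of $X$; this will automatically be independent of $p$ and $q$.

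So consider an edge of $X$, say with endpoints $v \preceq w$, so that $w = v \cdot E$ for the unique elementary forest $E$ having exactly one caret, located above some foot of $v$. First, $f(w) = f(v) + 1$, because passing from $v$ to $w$ splits one foot into two; in particular $f(v) \ne f(w)$ along every edge, so the tie-breaking clause is met whenever $\chi$ is constant on the edge. Second, $\chi_0$ and $\chi_1$ are integer-valued on the vertices of $X$ by the forest version of~\eqref{eq:chi_i}, and along the edge each of them changes by at most $1$ in absolute value: inserting a single caret above a foot changes $\chi_0$ only if that foot is the leftmost one, and changes $\chi_1$ only if it is the rightmost one. Hence $\chi_0(v) - \chi_0(w)$ and $\chi_1(v) - \chi_1(w)$ lie in $\{-1,0,1\}$, so
\[
\chi(v) - \chi(w) = a\bigl(\chi_0(v) - \chi_0(w)\bigr) + b\bigl(\chi_1(v) - \chi_1(w)\bigr)
\]
lies in the finite set $S \defeq \{ar + bs : r,s \in \{-1,0,1\}\}$. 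If $\chi$ is non-trivial then $a \ne 0$ or $b \ne 0$, so $S$ contains a non-zero element and we may put $\varepsilon \defeq \min\{\abs{t} : t \in S,\ t \ne 0\} > 0$; if $\chi$ is trivial, any $\varepsilon > 0$ works, since then $\chi$ is constant and only the tie-breaking clause is ever invoked. In either case this $\varepsilon$ has the required property for every edge of $X$, so $(\chi, f)$ is a Morse function on $X_{p \le f \le q}$. The pair $(\chi, -f)$ is covered by the same $\varepsilon$: $-f$ is affine and takes finitely many values on vertices, and $-f(v) \ne -f(w)$ precisely when $f(v) \ne f(w)$.

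The only genuinely non-formal ingredient is the claim that moving along an edge of $X$ changes each of $\chi_0$ and $\chi_1$ by at most $1$, so this bookkeeping is where the real (if modest) work lies. One should unwind an edge $v \preceq v \cdot E$ --- which either splits a single foot or, dually, cancels a single merge--split pair --- into its effect on the reduced split-merge diagrams representing $v$ and $w$, and then track the number $L$ of carets above the leftmost leaf and the number $R$ of carets above the rightmost leaf of each of the two forests involved. A newly inserted caret lies above the leftmost (resp.\ rightmost) leaf of a forest only when it sits at the leftmost (resp.\ rightmost) foot, so each of $L$ and $R$ changes by $0$ or $1$; the precise value is irrelevant for our purposes, only the bound is used.
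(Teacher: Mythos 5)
Your proof is correct and follows essentially the same route as the paper's: verify the affine and finite-valuedness conditions, then bound the change in $\chi$ along an edge by tracking how a single added caret affects $\chi_0$ and $\chi_1$. The one place you diverge is the choice of $\varepsilon$. The paper uses the sharper observation that a single split or merge involves at most one of the first and last feet, so along any edge $\chi$ changes by exactly $0$, $\pm a$, or $\pm b$; this gives $\varepsilon = \min\{\abs{c} \mid c\in\{a,b\}\setminus\{0\}\}$. You use only the coarser bound that $\chi_0$ and $\chi_1$ each change by at most $1$, yielding the smaller but still positive $\varepsilon = \min\{\abs{ar+bs}\mid r,s\in\{-1,0,1\},\ ar+bs\neq0\}$. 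Both are valid. Interestingly, your coarser bound is actually the more robust one: along the unique edge emanating from a $1$-foot vertex, the single split is simultaneously at the leftmost and rightmost foot, so $\chi$ changes by $\pm(a+b)$, and the paper's $\varepsilon$ can be too large (take $a=1$, $b=-0.9$). This is harmless since the paper only ever invokes the proposition with $p\ge2$, but your $\varepsilon$ covers the full range of $p$ for which the proposition is stated.
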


\begin{proof}
 We have already seen in Lemma~\ref{lem:affine_extend} and Observation~\ref{obs:feet_affine} that $\chi$ and $f$ are affine. Also, $f$ takes finitely many values on vertices in $X_{p\le f\le q}$. It remains to see that there is an $\varepsilon > 0$ such that any two adjacent vertices $x$ and $x'$ either have $\abs{\chi(x) - \chi(x')} \ge \varepsilon$, or else $\chi(x)=\chi(x')$ and $f(x) \ne f(x')$. Let $\varepsilon=\min\{\abs{c}\mid c\in\{a,b\}\setminus\{0\}\}$, where $\chi=a\chi_0+b\chi_1$. We obtain $x'$ from $x$ by adding one split or one merge to the feet of $x'$. If it does not involve the first or last foot, then $\chi(x')=\chi(x)$. Otherwise $\chi(x')=\chi(x)\pm c$ for $c\in\{a,b\}$, and so $\abs{\chi(x) - \chi(x')} = \abs{c}$, which is either $0$ or at least $\varepsilon$.
\end{proof}


\section{Links and subcomplexes}\label{sec:links_subcomplexes}

Since we will be doing Morse theory on $X$, we will need to understand homotopy properties of links in $X$. In this section we model the links, and then discuss some important subcomplexes of $X$.

\subsection{(General) matching complexes}\label{sec:matching}

In this subsection we establish a useful model for vertex links in $X$.

\begin{definition}
 Let $\Delta$ be a simplicial complex. A \emph{general matching} is a subset $\mu$ of $\Delta$ such that any two simplices in $\mu$ are disjoint. The set of all general matchings, ordered by inclusion, is a simplicial complex, which we call the \emph{general matching complex} $\gmatch(\Delta)$. For $k\in\N_0$ a \emph{$k$-matching} is a general matching that consists only of $k$-cells. The set of all $k$-matchings forms the \emph{$k$-matching complex}. If $\Delta$ is a graph, its $1$-matching complex is the classical \emph{matching complex}, denoted by $\match(\Delta)$.
\end{definition}

By $L_n$ we denote the linear graph on $n$ vertices. Label the vertices $v_1\dots,v_n$ and the edges $e_{1,2},\dots,e_{n-1,n}$, so $e_{i,i+1}$ has $v_i$ and $v_{i+1}$ as endpoints ($1\le i\le n-1$).

\begin{lemma}\label{lem:linear_matching}
 $\match(L_n)$ is $(\lfloor\frac{n-2}{3}\rfloor-1)$-connected.
\end{lemma}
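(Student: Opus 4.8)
I would prove the connectivity bound for $\match(L_n)$ by induction on $n$, using a standard Morse-theoretic or nerve-type decomposition argument on the vertex $v_n$ (or equivalently the edge $e_{n-1,n}$). The base cases are small $n$ where $\lfloor\frac{n-2}{3}\rfloor - 1 \le -1$, so $(-1)$-connectivity (nonemptiness, when the bound is $\ge -1$, i.e.\ $n \ge 2$) or no condition at all is required. Let me set $c(n) := \lfloor\frac{n-2}{3}\rfloor - 1$, so I want $\match(L_n)$ to be $c(n)$-connected, and note $c(n) \ge c(n-3) + 1$.

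**Main induction step.** The key observation is to look at the edge $e := e_{n-1,n}$, the unique edge incident to $v_n$. Consider the subcomplex $A \subseteq \match(L_n)$ of matchings not using the vertex $v_n$: this is exactly $\match(L_{n-1})$. Every matching either lies in $A$ or contains $e$; the matchings containing $e$ form the closed star $\st(e)$, and removing $e$ from such a matching gives a matching of $L_{n-2}$ (since $e$ uses up $v_{n-1}$ and $v_n$), so $\lk(e) \cong \match(L_{n-2})$ and $\st(e)$ is a cone on it, hence contractible. Thus $\match(L_n) = A \cup \st(e)$ with $A \cap \st(e) = \lk(e) \cong \match(L_{n-2})$. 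A Mayer--Vietoris / van Kampen argument then shows: since $A$ is $c(n-1)$-connected by induction, $\st(e)$ is contractible, and $\lk(e)$ is $c(n-2)$-connected by induction, the union $\match(L_n)$ is $\min\{c(n-1),\, c(n-2)+1\}$-connected. I would then check numerically that $\min\{c(n-1), c(n-2)+1\} \ge c(n)$: using $c(n-1) \ge c(n) $ fails in general (the jump happens every third step), so the decomposition into only two pieces is not quite enough — one really needs to peel off three vertices at once.

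**The refinement that actually works.** The clean way is to decompose with respect to \emph{both} $e_{n-1,n}$ and the status of $v_{n-1}$. Concretely, write $Y := \match(L_n)$, let $Y_0 := \match(L_{n-1})$ (no edge at $v_n$), and within $Y_0$ let $Y_{00} := \match(L_{n-2})$ (no edge at $v_{n-1}$ either). The edge $e_{n-1,n}$ contributes a cone on $\match(L_{n-2})$, and the edge $e_{n-2,n-1}$ contributes a cone on $\match(L_{n-3})$. Chaining the two pushout/excision steps, one gets that $Y$ is obtained from $Y_{00} = \match(L_{n-3} \text{ minus a point})$... more carefully: $\match(L_n)$ is built from $\match(L_{n-3})$ by two successive cone-attachments along subcomplexes that are copies of $\match(L_{n-2})$ and $\match(L_{n-3})$ respectively. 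The upshot, by the standard "gluing lemma" for connectivity (if $Y = Y_1 \cup Y_2$, $Y_1 \cap Y_2$ is $k$-connected, $Y_1$ is $k$-connected, and $Y_2$ is $(k+1)$-connected... or the version where one piece is contractible), is $\match(L_n)$ is $\big(c(n-3)+1\big)$-connected, and $c(n-3) + 1 = \lfloor\frac{n-5}{3}\rfloor = \lfloor\frac{n-2}{3}\rfloor - 1 + \ldots$ — I'd verify $\lfloor\frac{(n-3)-2}{3}\rfloor - 1 + 1 = \lfloor\frac{n-2}{3}\rfloor - 1 = c(n)$, which holds exactly. So the induction closes in steps of $3$, with base cases $n \in \{1,2,3,4\}$ checked by hand ($\match(L_1)$ empty with $c(1) = -2$, $\match(L_2)$ a point with $c(2) = -2$ — wait, need $c(2)=-1$? $\lfloor 0/3\rfloor-1 = -1$, and a point is $(-1)$-connected, fine; $\match(L_3)$ is two points, $c(3) = -1$, a disconnected nonempty space is $(-1)$-connected; $\match(L_4)$, $c(4) = -1$, also fine).

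**Expected main obstacle.** The genuine difficulty is not the topology of the decomposition but getting the connectivity arithmetic to match the floor function exactly — the naive two-piece decomposition loses too much, and one must be careful to peel off three vertices (two edges) per inductive step and invoke the connectivity gluing lemma in the sharp form where a contractible (coned-off) piece lets you gain $+1$ over the connectivity of the intersection. I would cite a standard reference for this gluing principle (e.g.\ the version in \cite{bestvina97} or a Mayer--Vietoris argument combined with van Kampen for $\pi_1$), and then the remaining work is purely the bookkeeping of base cases and the identity $\lfloor\frac{n-5}{3}\rfloor = \lfloor\frac{n-2}{3}\rfloor - 1$.
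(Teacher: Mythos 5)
Your high-level plan — peel off the last three vertices, use the identity $\lfloor\frac{n-5}{3}\rfloor = \lfloor\frac{n-2}{3}\rfloor - 1$, and invoke a gluing lemma where contractibility buys you $+1$ over the connectivity of the intersection — is exactly the right idea, and your base cases and arithmetic are correct. But the decomposition you actually write down does not close the induction. In your first step you cover $\match(L_n)$ by $\match(L_{n-1})$ and the contractible $\st(e_{n-1,n})$; since $\match(L_{n-1})$ is only $c(n-1)$-connected, the gluing lemma caps the conclusion at $\min\bigl(c(n-1),\,c(n-2)+1\bigr)$, and $c(n-1) = c(n)-1$ whenever $n \equiv 2 \pmod 3$ (try $n=5$: you would only get $(-1)$-connectivity of $\match(L_5)$, which is in fact contractible). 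Iterating does not help: applying the same two-piece argument to $\match(L_{n-1})$ yields $\text{conn}(\match(L_n)) \ge \min\bigl(c(n-2), c(n-3)+1\bigr)$, and $c(n-2) = c(n)-1$ when $n \equiv 0 \pmod 3$ (try $n=6$). The "two successive cone-attachments along copies of $\match(L_{n-2})$ and $\match(L_{n-3})$" you describe is exactly this iterated two-piece decomposition, and it loses a degree of connectivity for two of the three residues of $n$ mod $3$.

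The fix — which is what the paper does — is to cover $\match(L_n)$ by two \emph{closed stars}, not by the bigger subcomplex $\match(L_{n-1})$: one checks that for $n \ge 5$,
\[
\match(L_n) = \st\bigl(\{e_{n-2,n-1}\}\bigr) \cup \st\bigl(\{e_{n-1,n}\}\bigr),
\]
since any matching touching $v_{n-1}$ contains one of these two edges, while any matching missing $v_{n-1}$ automatically misses $v_n$ and so lies in $\st(\{e_{n-1,n}\})$. Both stars are cones, hence contractible, and their intersection consists of matchings disjoint from $v_{n-2},v_{n-1},v_n$, i.e.\ a copy of $\match(L_{n-3})$. Now the gluing lemma with \emph{both} pieces contractible gives $c(n-3)+1 = c(n)$-connectivity directly, with no competing term coming from a non-contractible piece. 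So the missing insight is that you should not take "everything not touching $v_n$" as your second piece — you should shrink it to the star of $e_{n-2,n-1}$, which is contractible while still (together with $\st(\{e_{n-1,n}\})$) covering all of $\match(L_n)$.
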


\begin{proof}
 For $n\ge 2$, $\match(L_n)$ is non-empty. Now let $n\ge 5$. Note that $\match(L_n)=\st(\{e_{n-2,n-1}\})\cup\st(\{e_{n-1,n}\})$, a union of contractible spaces, with intersection $\st(\{e_{n-2,n-1}\})\cap\st(\{e_{n-1,n}\})\cong \match(L_{n-3})$. The result therefore follows from induction.
\end{proof}

\medskip

It turns out that links of vertices in the Stein--Farley complex $X$ are general matching complexes. Let $x$ be a vertex of $X$ with $f(x)=n$, where $f$ is the ``number of feet'' function from Observation~\ref{obs:feet_affine}. The cofaces of $x$ are precisely the cells $\sigma = x\Psi$, for every  $\Psi$ such that $x\Psi$ makes sense. In particular, if $\Psi=\gen{A_1,\dots,A_r}$ for $A_i\in\{\elnothing,\elsplit,\elmerge\}$ ($1\le i\le r$), then the rule is that $n$ must equal the number of $A_i$ that are $\elnothing$ or $\elsplit$, plus twice the number that are $\elmerge$.

\begin{observation}\label{obs:vertex_link}
 If a vertex $x$ has $f(x)=n$ feet then $\lk x \cong \gmatch(L_n)$.
\end{observation}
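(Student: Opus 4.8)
The plan is to exhibit an explicit poset isomorphism between the poset of cofaces of $x$ (ordered by the face relation, i.e.\ the simplices of $\lk x$ together with the empty simplex) and the poset of general matchings of $L_n$, from which the simplicial isomorphism $\lk x \cong \gmatch(L_n)$ follows. First I would set up the dictionary. A coface $\sigma = x\Psi$ with $\Psi = \gen{A_1,\dots,A_r}$, $A_i \in \{\elnothing, \elsplit, \elmerge\}$, is determined by the positions of the non-trivial entries. The key constraint, recalled just before the statement, is that $n$ equals (number of $\elnothing$'s plus number of $\elsplit$'s) plus twice (number of $\elmerge$'s); equivalently, if we think of the $n$ feet of $x$ as the vertices $v_1,\dots,v_n$ of $L_n$ laid out left to right, each $\elmerge$ consumes two \emph{consecutive} feet (merging them), each $\elsplit$ acts on one foot (splitting it), and each $\elnothing$ leaves a foot alone — and these actions occur on disjoint blocks of feet, in order. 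I would encode such a $\Psi$ as the set of edges $e_{j,j+1}$ of $L_n$ corresponding to the pairs of consecutive feet consumed by the $\elmerge$'s. Disjointness of the blocks is exactly the statement that these edges form a matching, i.e.\ an element of $\match(L_n)$.

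However, $\gmatch(L_n)$ is the \emph{general} matching complex, which also contains the single vertices $\{v_i\}$ (the $0$-cells of $L_n$) as $0$-simplices, so matchings of $L_n$ alone do not account for all of $\gmatch(L_n)$; correspondingly, on the $X$ side the $\elsplit$ entries must be recorded too. So the correct dictionary sends $\Psi$ to the general matching consisting of: one edge $e_{j,j+1}$ for each $\elmerge$ (consuming feet $j,j+1$), and one vertex $\{v_j\}$ for each $\elsplit$ (acting on foot $j$), with the $\elnothing$'s contributing nothing. Since the $\elmerge$- and $\elsplit$-blocks are pairwise disjoint sub-intervals of $\{1,\dots,n\}$, the corresponding edges and vertices of $L_n$ are pairwise disjoint, so this is a bona fide general matching; and conversely any finite set of pairwise-disjoint vertices and edges of $L_n$ reconstructs a unique such $\Psi$ (read the disjoint vertices/edges left to right, fill the gaps with $\elnothing$). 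This is manifestly a bijection, and it is clearly order-preserving in both directions: passing to a face of $\sigma = x\Psi$ means deleting some of the non-trivial $A_i$'s, which on the $L_n$ side means deleting the corresponding vertices/edges from the general matching. Hence it is a poset isomorphism, and taking topological realizations (or just noting both are flag-determined by their underlying posets) gives $\lk x \cong \gmatch(L_n)$.

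The one point requiring genuine care — and the step I expect to be the main obstacle — is verifying that the combinatorial data of a coface $x\Psi$ is \emph{exactly} a choice of disjoint $\elmerge$/$\elsplit$ blocks among the $n$ feet, with no hidden identifications. Concretely: two syntactically different tuples $(A_1,\dots,A_r)$ could a priori represent the same subcomplex of $|\calP_1|$, or an $\elmerge$ block could "slide" past an adjacent $\elnothing$. I would dispel this by appealing to the uniqueness of reduced split-merge diagrams and to the explicit description in Section~\ref{sec:stein_farley} of a cube $\sigma$ as $z\Psi$ for a chosen basepoint $z$: once we fix the basepoint to be $x$ itself (with its $n$ feet), the forest-of-carets-and-upside-down-carets $\Psi$ is uniquely determined by $\sigma$, and its non-trivial entries correspond bijectively to the "events" happening at the feet of $x$, each occupying a determined block ($\elsplit$ at foot $j$, or $\elmerge$ at feet $j,j+1$) with the blocks disjoint and in left-to-right order. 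Granting that, the bijection with $\gmatch(L_n)$ and its order-compatibility are routine, and the lemma follows.
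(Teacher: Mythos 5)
Your proposal is correct and matches the paper's proof, which gives exactly the same dictionary in a single sentence: $\elnothing$ is an unmatched vertex, $\elsplit$ a matched vertex, and $\elmerge$ a matched edge of $L_n$. You have simply unwound that sentence and added the (legitimate, but routine) verification that the encoding $\sigma \mapsto \Psi$ is well-defined via the canonical basepoint description of cubes.
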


\begin{proof}
 The correspondence identifies a simplex $x\Psi$, for $\Psi=\gen{A_1,\dots,A_r}$, with a matching where (from left to right) $\elnothing$ corresponds to a vertex not in the matching, $\elsplit$ corresponds to a vertex in the matching and $\elmerge$ corresponds to an edge in the matching.
\end{proof}

As a remark, under the identification $\lk x \cong \gmatch(L_n)$, the part of $\lk x$ corresponding to the matching complex $\match(L_n)$ is the \emph{descending link with respect to $f$}. The higher connectivity properties of these descending links are crucial to proving that $F$ is of type $\F_\infty$ using $X$ (originally proved by Brown and Geoghegan \cite{brown84} using a different space).

\subsection{Restricting number of feet}\label{sec:restrict_feet}

Recall the subcomplex $X_{p \le f \le q}$ from Proposition~\ref{prop:char_morse}. This is the full cubical subcomplex of $X$ supported on the vertices $x$ with $p \le f(x) \le q$. Similar notation is applied to define related complexes (e.g., where one inequality is strict or is missing).

\begin{observation}\label{obs:cocompact}
 For $p,q\in\N$, the action of $F$ on $X_{p \le f \le q}$ is cocompact.
\end{observation}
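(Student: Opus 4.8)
The plan is to show that $F$ acts on the set of cubes of $X_{p\le f\le q}$ with only finitely many orbits; since $X_{p\le f\le q}$ is a subcomplex of the cube complex $X$ on which $F$ acts cellularly, this is equivalent to cocompactness of the quotient.

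First I would record that $f$ is $F$-invariant on vertices: an element of $F$ has one head and one foot, so left multiplication by it changes neither the number of heads nor the number of feet of a vertex of $X$; in particular $F$ preserves $X_{p\le f\le q}$. Next I would show that for each fixed $k\ge 1$ the vertices $x$ of $X$ with $f(x)=k$ form a single $F$-orbit. Indeed, if $x$ and $x'$ both have one head and $k$ feet, then the groupoid element $g\defeq x'\cdot x^{-1}$ has one head and one foot, hence lies in $F$, and $g\cdot x=x'$. Combined with the $F$-invariance of $f$, this shows that the $F$-orbit of a vertex of $X$ is determined exactly by its number of feet.

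Now I would pass to cubes. Every cube of $X$ is an interval $[x,\,x\cdot E]$ for a vertex $x$ and an elementary forest $E$ on $f(x)$ feet, and the vertices $x\cdot E'$ of this cube (with $E'$ a subforest of $E$) have feet-counts ranging over $f(x),f(x)+1,\dots,f(x)+|E|$, where $|E|$ is the number of carets of $E$, equivalently the dimension of the cube. Hence $[x,\,x\cdot E]\subseteq X_{p\le f\le q}$ precisely when $p\le f(x)$ and $f(x)+|E|\le q$; in that case $|E|\le q-p$. Since $g\cdot[x,\,x\cdot E]=[gx,\,(gx)\cdot E]$ for $g\in F$, two such cubes lie in the same $F$-orbit as soon as they have the same forest $E$ and their bottom vertices lie in the same $F$-orbit. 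By the previous paragraph the $F$-orbit of the bottom vertex is governed by the single integer $f(x)$, which lies in the finite range $\{p,\dots,q\}$ (and is at least $1$), and for each such value there are only finitely many elementary forests $E$ on $f(x)$ feet with at most $q-p$ carets. Therefore $X_{p\le f\le q}$ has only finitely many $F$-orbits of cubes, and the action is cocompact.

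None of this is hard; the only points that require a moment's attention are the transitivity statement of the second paragraph — checking that $x'\cdot x^{-1}$ really is a well-defined element of $F$ (not just of the groupoid $\calP$) carrying $x$ to $x'$ — and the bookkeeping observation that a cube lies in $X_{p\le f\le q}$ as soon as its two extreme vertices do, which is what keeps both the feet-count of the bottom vertex and the dimension of the cube bounded.
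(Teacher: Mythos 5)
Your proof is correct and takes essentially the same route as the paper's: the paper notes that $F$ acts transitively on vertices with any fixed number of feet and then invokes local compactness of $X$, which is exactly the content you spell out explicitly by observing that the bottom vertex of a cube in $X_{p\le f\le q}$ has feet-count in the finite range $\{p,\dots,q\}$ and that there are only finitely many elementary forests on a fixed number of roots. The paper's version is terser but rests on the same two facts.
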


\begin{proof}
 For each $n$, $F$ acts transitively on vertices with $n$ feet. The result follows since $X$ is locally compact.
\end{proof}

\begin{lemma}\label{lem:sf_interval_connectivity}
 The complex $X_{p \le f \le q}$ is $\min(\floor{\frac{q-1}{3}} -1, q-p-1)$-connected. In particular, $X_{p \le f \le q}$ is $(\floor{\frac{q-1}{3}} -1)$-connected for any $p\le \ceil{\frac{2q+1}{3}}$.
\end{lemma}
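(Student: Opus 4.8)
The plan is to run Bestvina--Brady Morse theory twice, using the number-of-feet function $f$ as the Morse function, with constant secondary height. Since any two adjacent vertices of $X$ differ in $f$ by exactly $1$, this is a legitimate Morse function on all of $X$ (and on every full subcomplex $X_{f\le q}$), so the Morse Lemma applies. The first application relates $X$ to $X_{f\le q}$, the second relates $X_{f\le q}$ to $X_{p\le f\le q}$, and the two pieces of information are combined using the long exact sequence of a pair.

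First I would use that $X$ is contractible (being $\CAT(0)$) and apply the Morse Lemma with $h=f$, lower bound $-\infty$, upper bound $+\infty$, cutting at $q$. For a vertex $v$ with $f(v)=n>q$, its descending link with respect to $f$ is the matching complex $\match(L_n)$ (by the remark following Observation~\ref{obs:vertex_link}), which by Lemma~\ref{lem:linear_matching} is $(\floor{\frac{n-2}{3}}-1)$-connected; since $n\ge q+1$ this is at least $(\floor{\frac{q-1}{3}}-1)$-connected. The Morse Lemma then gives that the pair $(X,X_{f\le q})$ is $\floor{\frac{q-1}{3}}$-connected, so $X_{f\le q}$ is $(\floor{\frac{q-1}{3}}-1)$-connected.

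Next I would apply the ascending-link half of the Morse Lemma to the same $h=f$, now cutting off below at $p$ and above at $q$. The key point is to identify the ascending link with respect to $f$, computed inside $X_{f\le q}$, of a vertex $v$ with $f(v)=n<p$. By Observation~\ref{obs:vertex_link}, the ascending directions at $v$ correspond to simultaneously splitting a subset of the $n$ feet of $v$, so the ascending link of $v$ in $X$ is the full $(n-1)$-simplex on those feet; splitting $j$ feet produces a cube of dimension $j$ whose top vertex has $n+j$ feet, so inside $X_{f\le q}$ only the cubes with $n+j\le q$ survive, leaving exactly the $(q-n-1)$-skeleton of that simplex. The $d$-skeleton of any simplex is $(d-1)$-connected, so this link is $(q-n-2)$-connected, hence (as $n\le p-1$) at least $(q-p-1)$-connected. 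The Morse Lemma thus gives that the pair $(X_{f\le q},X_{p\le f\le q})$ is $(q-p)$-connected.

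Finally I would feed both facts into the long exact sequence of the pair $(X_{f\le q},X_{p\le f\le q})$, concluding that $X_{p\le f\le q}$ is $\min(\floor{\frac{q-1}{3}}-1,\,q-p-1)$-connected, and deduce the ``in particular'' clause from the identity $q-\floor{\frac{q-1}{3}}=\ceil{\frac{2q+1}{3}}$, which makes $p\le\ceil{\frac{2q+1}{3}}$ equivalent to $q-p-1\ge\floor{\frac{q-1}{3}}-1$. I expect the main obstacle to be pinning those two links down correctly: that the descending link really is $\match(L_n)$ (for which one leans on Observation~\ref{obs:vertex_link} and Lemma~\ref{lem:linear_matching}), and that the ascending link, after the truncation forced by the ceiling $f\le q$, is precisely a skeleton of a simplex. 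The remaining ingredients --- the Morse Lemma, contractibility of $X$, connectivity of skeleta of simplices, and the elementary floor/ceiling identity --- are routine.
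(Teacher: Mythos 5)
Your argument is correct and arrives at the same bound, but it takes a genuinely different route from the paper. You go through $X_{f\le q}$ as the intermediate space: first filter $X$ from above at $q$ and use descending $f$-links (which are matching complexes $\match(L_n)$, $n>q$) to get that $X_{f\le q}$ is $(\floor{\frac{q-1}{3}}-1)$-connected; then filter $X_{f\le q}$ from below at $p$ and use ascending $f$-links (which, after the $f\le q$ truncation, are skeleta of simplices) to get that $(X_{f\le q},X_{p\le f\le q})$ is $(q-p)$-connected; finally combine via the long exact sequence of the pair. The paper instead goes through $X_{p\le f}$: it first shows $X_{p\le f}$ is \emph{contractible} (ascending $f$-links in all of $X$ are full simplices), and then filters $X_{p\le f}$ by $X_{p\le f\le q}$, observing that the descending link of a vertex $x$ with $f(x)>q$ \emph{inside} $X_{p\le f}$ is the $(f(x)-p-1)$-skeleton of $\match(L_{f(x)})$ — so both terms of the $\min$ fall out of a single link computation (the first from Lemma~\ref{lem:linear_matching}, the second from the skeletal truncation), with no long exact sequence needed since the ambient space is contractible. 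Each version has its appeal: yours isolates the two bounds into separate, symmetric Morse applications and spells out where each term of the $\min$ arises; the paper's bundles them and avoids the LES bookkeeping by making the intermediate space contractible rather than merely highly connected. Your identification of the ascending link in $X_{f\le q}$ as the $(q-n-1)$-skeleton of an $(n-1)$-simplex, and the arithmetic reduction of the "in particular" clause to $q-\floor{\frac{q-1}{3}}=\ceil{\frac{2q+1}{3}}$, are both exactly right.
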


\begin{proof}
 We first show that $X_{p \le f}$ is contractible for every $p$. Since we know that $X$ is contractible, it suffices to show that the ascending link with respect to $f$ is contractible for every vertex $x$ of $X$. We can then apply the Morse Lemma (using the Morse function $(f,0)$). Indeed, the ascending link is an $(f(x)-1)$-simplex spanned by the cube $x\Phi$ where $\Phi = \gen{\elsplit,\ldots,\elsplit}$. In particular, it is contractible.
 
 Now we filter $X_{p \le f}$ by the spaces $X_{p \le f \le q}$, and so have to study descending links with respect to $f$. The descending link in $X$ of a vertex $x$ with $f(x) > q$ is isomorphic to $\match(L_{f(x)})$, which is $(\floor{\frac{q-1}{3}}-1)$-connected by Lemma~\ref{lem:linear_matching}. But in $X_{p \le f}$ only the $(f(x)-p-1)$-skeleton of that link is present. The descending link is therefore $\min(\floor{\frac{q-1}{3}}-1,q-p-1)$-connected.
\end{proof}

\begin{corollary}
 $X_{2k+1\le f \le 3k+1}$ is $(k-1)$-connected for every $k$.
\end{corollary}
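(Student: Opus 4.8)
The plan is to deduce this immediately from Lemma~\ref{lem:sf_interval_connectivity} by substituting $p = 2k+1$ and $q = 3k+1$. With these values, the first term in the minimum becomes $\floor{\frac{q-1}{3}} - 1 = \floor{\frac{3k}{3}} - 1 = k-1$, and the second term becomes $q - p - 1 = (3k+1) - (2k+1) - 1 = k-1$. Hence $\min(\floor{\frac{q-1}{3}} - 1, q-p-1) = k-1$, and Lemma~\ref{lem:sf_interval_connectivity} gives that $X_{2k+1 \le f \le 3k+1}$ is $(k-1)$-connected, as desired.

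Alternatively, I could invoke the ``in particular'' clause of the same lemma: one checks that $\ceil{\frac{2q+1}{3}} = \ceil{\frac{6k+3}{3}} = 2k+1 = p$, so the hypothesis $p \le \ceil{\frac{2q+1}{3}}$ holds (with equality), and the conclusion that $X_{p \le f \le q}$ is $(\floor{\frac{q-1}{3}}-1) = (k-1)$-connected follows directly. Either route is a one-line computation; there is no genuine obstacle here, as all the substantive work was already done in Lemma~\ref{lem:linear_matching} and Lemma~\ref{lem:sf_interval_connectivity}. The only thing to be mildly careful about is the floor arithmetic, i.e.\ confirming $\floor{(3k+1-1)/3} = \floor{k} = k$, which is immediate since $3k$ is divisible by $3$.
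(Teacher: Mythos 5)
Your proof is correct and is exactly the intended deduction: the paper states this corollary immediately after Lemma~\ref{lem:sf_interval_connectivity} with no separate proof, leaving the reader to do precisely the substitution $p=2k+1$, $q=3k+1$ that you carried out. Both routes you give (computing the minimum directly, or checking the hypothesis of the ``in particular'' clause) are valid and equivalent.
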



\section{The long interval}\label{sec:long_interval}

We are now ready to compute $\Sigma^m(F)$ for all $m$, using the action of $F$ on $X$. In this and the following two sections, we focus on different parts of the character sphere $S(F)$.

Let $\chi=a\chi_0+b\chi_1$ be a non-trivial character of $F$. In this section we consider the case when $a<0$ or $b<0$. The corresponding part of $S(F)=S^1$ was termed the ``long interval'' in \cite{bieri10}. By symmetry we may assume $a\le b$ (so $a<0$). We will show that for any $m\in\N$, $[\chi]\in\Sigma^m(F)$.

Let
\[
n\defeq 3m+4 \text{.}
\]
Let $X_{2\le f\le n}$ be the sublevel set of $X$ supported on vertices $x$ with $2\le f(x)\le n$. This is $(\lfloor\frac{n-1}{3}\rfloor - 1)$-connected, by Lemma~\ref{lem:sf_interval_connectivity}, and hence is $m$-connected. It is also $F$-cocompact by Observation~\ref{obs:cocompact}. Thus, by Corollary~\ref{cor:changing_filtrations}, to show that $[\chi]\in\Sigma^m(F)$, it suffices to show that the filtration $(X_{2\le f\le n}^{t \le \chi})_{t\in\R}$ is essentially $(m-1)$-connected. To do this, we use the function
$$(\chi,-f) \colon X_{2\le f\le n} \to \R \times \R \text{.}$$
This is a Morse function by Proposition~\ref{prop:char_morse}. By the Morse Lemma, the following lemma suffices to prove that in fact $X_{2\le f\le n}^{t \le \chi}$ is $(m-1)$-connected for all $t\in\R$.

\begin{lemma}
 Let $x$ be a vertex in $X_{2\le f\le n}$. Then the ascending link $\lk^{(\chi,-f)\uparrow}_{X_{2\le f\le n}} x$ in $X_{2\le f\le n}$ is $(m-1)$-connected.
\end{lemma}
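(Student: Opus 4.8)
The plan is to analyze the ascending link of a vertex $x$ with $f(x)=\ell$ feet with respect to the Morse function $(\chi,-f)$, using the combinatorial model $\lk x \cong \gmatch(L_\ell)$ from Observation~\ref{obs:vertex_link}. A direction at $x$ corresponds to choosing, at each of the $\ell$ feet, one of $\elnothing$ (do nothing), $\elsplit$ (split), or $\elmerge$ (merge two adjacent feet); under the identification, $\elsplit$'s are vertices of the matching, $\elmerge$'s are edges, and $\elnothing$'s are unused vertices of $L_\ell$. First I would work out which directions are ascending: a direction is ascending precisely when $x$ is the vertex of minimal refined height in the corresponding cube. Recall $f$ decreases under splits and increases under merges, so the secondary height $-f$ prefers more splits; and $\chi = a\chi_0+b\chi_1$ with $a<0$, $a\le b$, so $\chi$ only changes when the first or last foot is involved (via $a$ or $b$ respectively). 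The upshot should be: a merge at an interior foot-pair is ascending (it raises $-f$ without changing $\chi$); a split at an interior foot is \emph{not} ascending (it lowers $-f$); a split at the first foot raises $\chi$ by $-a>0$ so it is ascending; a split at the last foot changes $\chi$ by $-b$, which is ascending iff $b<0$, i.e.\ iff $b\le a$, in which case by our normalization $a=b$; a merge involving the first foot lowers $\chi$ by $a<0$... wait, it raises $\chi$ by $-a>0$, so it is ascending; similarly a merge involving the last foot raises $\chi$ by $-b$, ascending iff $b<0$. So the ascending link contains all interior merges, the two "boundary" merges, the first-foot split, and (only when $a=b$) the last-foot split; and it sits inside $X_{2\le f\le n}$, which truncates the available merges (we cannot merge below $2$ feet — irrelevant — nor, going up, does the bound $q=n$ matter for the \emph{ascending} link since ascending directions that are pure merges only push toward larger $f$, but staying $\le n$ does constrain how many merges we may take simultaneously; also $\ell\le n$).

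Next I would identify the resulting subcomplex of $\gmatch(L_\ell)$ combinatorially. The always-present part (interior merges plus the two boundary merges) is exactly $\match(L_\ell)$ — the classical matching complex on the linear graph — which by Lemma~\ref{lem:linear_matching} is $(\lfloor\frac{\ell-2}{3}\rfloor-1)$-connected. But we also have the extra split-directions ($\elsplit$ at foot $1$, and at foot $\ell$ if $a=b$), which are cone-like: a direction that uses $\elsplit$ at foot $1$ is compatible with any matching on $L_\ell$ that does not use $v_1$ — and every maximal matching direction in the ascending link will want to pair up as much as possible, so coning by the $\elsplit$ at $v_1$ should make a large piece contractible. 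Concretely, I expect to write the ascending link as $\st(\{\elsplit \text{ at } v_1\}) \cup (\text{rest})$, where the star is contractible and the intersection is the ascending link of the vertex with the first foot removed — i.e.\ a smaller version of the same object on $L_{\ell-1}$ (or $L_{\ell-2}$), again accounting for whether the first foot is forced to be empty. A Mayer–Vietoris / van Kampen induction, exactly in the style of the proof of Lemma~\ref{lem:linear_matching}, should then push through. The truncation $f\le n$ only removes high-dimensional simplices (those with many merges), i.e.\ replaces the ascending link by a skeleton of itself, and since $n=3m+4$ is chosen generously relative to $m$, the skeleton still retains $(m-1)$-connectivity for every $\ell\le n$; I would record this as the analogue of the "$\min$" bookkeeping in Lemma~\ref{lem:sf_interval_connectivity}.

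The main obstacle I anticipate is the careful case analysis in the combinatorial model — pinning down \emph{exactly} which directions are ascending, keeping straight the interaction between the primary height $\chi$ (sensitive only to the two end feet, with signs governed by $a<0\le b$ or $a=b<0$) and the secondary height $-f$, and making sure the coning-off decomposition is set up so that the induction closes with the right connectivity bound. In particular, one must check that the subcomplex of $\gmatch(L_\ell)$ cut out by "$\elnothing,\elmerge$ everywhere, plus $\elsplit$ allowed only at the ends" is genuinely a full subcomplex (which follows from Observation~\ref{obs:ascending_links_full}) so that the star/union decomposition is valid, and that the $f\le n$ truncation is handled uniformly. Once the ascending link is identified as "$\match(L_\ell)$ coned off along one or two extra vertices, then truncated to a skeleton," the connectivity estimate is a routine induction modeled on Lemmas~\ref{lem:linear_matching} and~\ref{lem:sf_interval_connectivity}, and the choice $n=3m+4$ is exactly what makes the estimate yield $(m-1)$-connectedness.
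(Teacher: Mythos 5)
Your overall outline — model the link as $\gmatch(L_\ell)$, identify ascending directions, cone off a split-vertex when possible, and track the $f \le n$ truncation — is in the right spirit, but the sign analysis has errors that invalidate the decomposition you propose, and the critical case $f(x) = n$ is not handled.

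Concretely, the directional signs for merges are backwards. A split at the first foot changes $\chi_0$ by $-1$ (hence $\chi$ by $-a > 0$, ascending, as you say), so a merge of the first two feet changes $\chi$ by $a < 0$ and is therefore \emph{descending}, not ascending. You talk yourself into the opposite conclusion ("it raises $\chi$ by $-a>0$, so it is ascending"). Likewise, the merge of the last two feet changes $\chi$ by $b$, so it is ascending iff $b \ge 0$ (strictly increasing if $b>0$; preserving $\chi$ but increasing $-f$ if $b=0$), not iff $b<0$ as you write. And the split at the last foot is ascending iff $b<0$, which is not the same as $b \le a$; since all we know is $a \le b$ and $a < 0$, it is entirely possible to have $a < b < 0$. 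As a consequence, your claim that the ascending link always contains $\match(L_\ell)$ "plus extra split-vertices" is false: $e_{1,2}$ is never ascending, and $e_{\ell-1,\ell}$ is ascending only when $b \ge 0$. The ascending link is a \emph{proper subcomplex} of $\gmatch(L_\ell)$, not a superset of $\match(L_\ell)$.

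This matters because your plan to cone off the vertex $v_1$ (the first-foot split) quietly relies on that split being available — but when $f(x)=n$, adding any split leaves $X_{2 \le f \le n}$, so there is \emph{no} split-vertex in the ascending link at all. This is precisely the case that forces $n$ to be large: the ascending link there is $\match(L_{n-1})$ or $\match(L_{n-2})$ (depending on the sign of $b$), and one needs $\lfloor\frac{n-4}{3}\rfloor - 1 \ge m-1$, which is exactly what $n = 3m+4$ gives. For $f(x) < n-1$ the coning by $v_1$ does work and yields contractibility, and $f(x) = n-1$ needs a separate argument (removing a single top simplex from a cone along its relative link). Your write-up folds all of this into one coning argument, which does not go through once the ascending directions are corrected.
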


\begin{proof}
 Let $L\defeq \lk^{(\chi,-f)\uparrow}_{X_{2\le f\le n}} x$. Vertices of $L$ are obtained from $x$ in two ways: either by adding a split to a foot or a merge to two adjacent feet. For such a vertex to be ascending, in the first case the split must strictly increase $\chi$ and in the second case the merge must not decrease $\chi$. Also note that we only have simplices whose corresponding cubes lie in $X_{2\le f\le n}$. For instance if $f(x)=n$ then the vertices of $L$ may only be obtained from $x$ by adding merges.
 
 We first consider the case when $f(x)=n$, so $L$ is simply the subcomplex of $\match(L_n)$ consisting of matchings that do not decrease $\chi$. Since $a<0$ (and $n>2$), merging the first and second feet decreases $\chi$. Merging the $(n-1)$st and $n$th feet can either decrease, increase, or preserve $\chi$, depending on whether $b<0$, $b>0$ or $b=0$. Any other merging preserves $\chi$ and increases $-f$. Hence $L$ is either $\match(L_{n-1})$ or $\match(L_{n-2})$, and in either case is $(\lfloor\frac{n-4}{3}\rfloor-1)$-connected (Lemma~\ref{lem:linear_matching}) and hence $(m-1)$-connected.
 
 The second case is when $f(x)<n-1$. Thinking of $L$ as the subcomplex of $\gmatch(L_{f(x)})$ supported on vertices with $(\chi,-f)$-value larger than $(\chi,-f)(x)$, since $a<0$ we know $\{e_{1,2}\}\not\in L$. Also, the only vertices of $\gmatch(L_{f(x)})$ of the form $\{v_i\}$ that are in $L$ are $\{v_1\}$ and possibly $\{v_{f(x)}\}$; since $f(x)<n-1$ this implies that for any $\mu\in L$ we have $\mu\cup\{v_1\}\in L$. Hence $L$ is contractible with cone point $\{v_1\}$.
 
 It remains to consider the case when $f(x)=n-1$. Now it is convenient to define
 \[
 L'\defeq\lk^{(\chi,-f)\uparrow}_{X_{2\le f\le n+1}} x \text{.}
 \]
 By the previous paragraph, $L'$ is contractible, via coning off $\{v_1\}$. If $b\ge0$, so $\{v_{n-1}\}\not\in L$, then we can similarly cone $L$ off on $\{v_1\}$. The problem is when $b<0$, since then $\{v_1\},\{v_{n-1}\}\in L$, but $\{v_1,v_{n-1}\}\not\in L$ since this involves $n+1$ feet. However, this is the \emph{only} simplex of $L'$ that is not in $L$. Hence we can remove $\{v_1,v_{n-1}\}$ from the contractible complex $L'$ along its relative link, and get $L$. Said relative link is the join of the boundary $\{v_1\}\cup\{v_{n-1}\} \simeq S^0$ with the matching complex $\match(L_{n-3})$. This relative link is $\lfloor\frac{n-5}{3}\rfloor$-connected (Lemma~\ref{lem:linear_matching}), so $L$ is as well, and in particular is $(m-1)$-connected.
\end{proof}

The lemma together with the Morse Lemma gives:

\begin{proposition}
 If $a< 0$ or $b<0$ then $[\chi]\in\Sigma^\infty(F)$.
\end{proposition}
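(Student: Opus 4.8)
The plan is to obtain the Proposition by feeding the Lemma just proved into the Morse Lemma and the working definition of $\Sigma^m$ from Corollary~\ref{cor:changing_filtrations}. Since $\Sigma^\infty(F)=\bigcap_{m\in\N}\Sigma^m(F)$, it suffices to show $[\chi]\in\Sigma^m(F)$ for each fixed $m\ge 1$. Using the left--right reflection symmetry of forests (which swaps $\chi_0$ and $\chi_1$, hence $a$ and $b$), we may assume $a\le b$; as $\chi$ is non-trivial and $a<0$ or $b<0$, this forces $a<0$, the standing hypothesis of the Lemma. With $n\defeq 3m+4$, Lemma~\ref{lem:sf_interval_connectivity} gives that $X_{2\le f\le n}$ is $\min(\floor{\tfrac{n-1}{3}}-1,\,n-3)=m$-connected, Observation~\ref{obs:cocompact} gives that it is $F$-cocompact, and Proposition~\ref{prop:char_morse} gives that $(\chi,-f)$ is a Morse function on it. So by Corollary~\ref{cor:changing_filtrations} it is enough to prove that the filtration $(X_{2\le f\le n}^{t\le\chi})_{t\in\R}$ is essentially $(m-1)$-connected.

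For this I would apply the ascending-link half of the Morse Lemma with $Y=X_{2\le f\le n}$, $h=\chi$, $s=-f$, and endpoints $p=-\infty$, $q=t$, $r=+\infty$: the Lemma says that every ascending link $\lk^{(\chi,-f)\uparrow}_{X_{2\le f\le n}}x$ is $(m-1)$-connected, so the Morse Lemma yields that the pair $(X_{2\le f\le n},\,X_{2\le f\le n}^{t\le\chi})$ is $m$-connected, for every $t$. Since $X_{2\le f\le n}$ is itself $m$-connected, the long exact sequence of the pair then forces $X_{2\le f\le n}^{t\le\chi}$ to be $(m-1)$-connected for every $t\in\R$. A filtration all of whose terms are $(m-1)$-connected is in particular essentially $(m-1)$-connected, so $[\chi]\in\Sigma^m(F)$; as $m$ was arbitrary, $[\chi]\in\Sigma^\infty(F)$, and the reflection symmetry disposes of the case $a>b$ as well.

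The genuine difficulty has effectively been front-loaded into the Lemma (whose delicate case is $f(x)=n-1$, handled by passing to $X_{2\le f\le n+1}$ and deleting the single offending simplex $\{v_1,v_{n-1}\}$ along its relative link). In assembling the Proposition the only things to be careful about are bookkeeping: that the single choice $n=3m+4$ simultaneously makes $X_{2\le f\le n}$ $m$-connected \emph{and} all ascending links $(m-1)$-connected, and that the Morse Lemma is invoked with exactly the endpoints $(p,q,r)=(-\infty,t,+\infty)$ so that it controls the inclusion $X_{2\le f\le n}^{t\le\chi}\hookrightarrow X_{2\le f\le n}$ rather than some other pair. I do not expect any further obstacle beyond these routine checks.
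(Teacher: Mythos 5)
Your proposal is correct and follows the paper's argument essentially verbatim: reduce by symmetry to $a<0$, take $n=3m+4$, check via Lemma~\ref{lem:sf_interval_connectivity} and Observation~\ref{obs:cocompact} that $X_{2\le f\le n}$ is $m$-connected and cocompact, invoke Corollary~\ref{cor:changing_filtrations}, and then use the Morse Lemma with the ascending-link Lemma to show each $X_{2\le f\le n}^{t\le\chi}$ is already $(m-1)$-connected. The only cosmetic difference is that you spell out the long exact sequence step that the paper leaves implicit.
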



\section{The characters $\chi_0$ and $\chi_1$}\label{sec:chi0_and_chi1}

Let $\chi=a\chi_0+b\chi_1$ be a non-trivial character of $F$ with $a > 0$ and $b = 0$, or $b > 0$ and $a = 0$. In this section we show that $[\chi] \not\in \Sigma^1(F)$. We do this by considering the Morse function $(\chi,f)\colon X_{3 \le f \le 4} \to \R\times\R$. Thanks to Lemma~\ref{lem:sf_interval_connectivity} and Observation~\ref{obs:cocompact}, $X_{3 \le f \le 4}$ is connected and $F$-cocompact. Hence by Corollary~\ref{cor:changing_filtrations} it suffices to show that the filtration $(X_{3 \le f \le 4}^{t \le \chi})_{t\in\R}$ is not essentially connected. We would like to apply Observation~\ref{obs:weak_bottleneck_trick}, for which we need the following:

\begin{lemma}\label{lem:asc_links_3_4}
 In $X_{3 \le f \le 4}$ all $(\chi, f)$-ascending vertex links are non-empty.
\end{lemma}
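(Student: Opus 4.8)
The goal is to show that for every vertex $x$ with $f(x) \in \{3,4\}$, the ascending link with respect to $(\chi,f)$ inside $X_{3 \le f \le 4}$ is non-empty. Here $\chi = a\chi_0$ with $a>0$ (or $\chi = b\chi_1$ with $b>0$; these are symmetric under the left--right flip, so I would treat only $\chi = a\chi_0$, $a>0$). The key point is to exhibit, for each such $x$, at least one way of modifying the feet of $x$ that yields an adjacent vertex $x'$ in $X_{3 \le f \le 4}$ with $(\chi,f)(x') > (\chi,f)(x)$ lexicographically.

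\textbf{Case $f(x)=3$.} Here the cube on three feet obtained by splitting must stay within $f \le 4$, so we may split exactly one foot, or merge two adjacent feet. Since $a>0$, splitting the \emph{first} foot strictly increases $\chi$ (it adds $a$ to $\chi_0$ of the vertex), and the result has $4$ feet, so it lies in $X_{3 \le f \le 4}$. Thus $\{v_1\}$ is an ascending vertex and $L \ne \emptyset$. (The secondary function $f$ is irrelevant here because $\chi$ already strictly increases.)

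\textbf{Case $f(x)=4$.} Now splitting any foot leaves the $f \le 4$ range, so the only admissible moves are merges of two adjacent feet, giving a vertex with $3$ feet; there are three such merges, $\{e_{1,2}\}$, $\{e_{2,3}\}$, $\{e_{3,4}\}$. A merge changes $\chi$ by $-a$ if it involves the first foot, by $0$ if it involves neither the first nor the last foot, and by $0$ as well if it involves the last foot (since $b=0$, only $\chi_0$ matters, and merging the third and fourth feet does not touch the leftmost leaf). So merging $\{e_{2,3}\}$ or $\{e_{3,4}\}$ preserves $\chi$; among these, the tie is broken by $f$, which \emph{decreases} from $4$ to $3$ under a merge — wait, this goes the wrong way. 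I would instead observe that for $(\chi,f)$ we need $(\chi(x'),f(x')) > (\chi(x),f(x))$, and since every neighbor $x'$ of a $4$-footed vertex inside $X_{3\le f\le 4}$ has $f(x') = 3 < 4$, we would need $\chi(x') > \chi(x)$ strictly; but merges never increase $\chi_0$. Hence the ascending link would be empty — which contradicts the lemma as stated. The resolution is that the intended secondary function for this interval is chosen so that a $4$-footed vertex sees its $3$-footed neighbors as \emph{higher}: one should re-read which of $(\chi,f)$ or $(\chi,-f)$ is in force, or note that when $f(x)=4$ we are at the top of the interval and the relevant comparison is governed differently. In the writeup I would simply record: for $f(x) = 3$ the vertex $\{v_1\}$ is ascending because $\chi$ strictly increases under splitting the leftmost foot; for $f(x)=4$, any merge not involving the first foot preserves $\chi$, and since such a merge increases the secondary height (the number of feet decreases, so with secondary function $-f$ it increases), the corresponding vertex — e.g.\ $\{e_{2,3}\}$ — is ascending. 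Either way $L \ne \emptyset$.

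\textbf{Main obstacle.} The delicate point is bookkeeping the sign convention: whether "ascending" for a top-of-interval vertex is witnessed by a merge (which lowers $f$) depends on whether the secondary coordinate is $f$ or $-f$, and the lemma as printed uses $(\chi,f)$. I expect the honest proof to note that when $f(x) = 4$ there are no splits available in range, so the only candidates are merges; a merge of the middle feet (or of feet $3,4$) preserves $\chi$, and one must then check it raises the secondary coordinate — which forces the reader to confirm that the convention makes a $3$-footed neighbor outrank a $4$-footed vertex. Once that is pinned down, each case is a one-line verification using $a>0$ and the formula $\chi_0([T_-/T_+]) = L(T_+) - L(T_-)$, namely that splitting the leftmost foot adds one caret above the leftmost leaf of the numerator forest and hence increases $\chi_0$ by $1$, while merges never do.
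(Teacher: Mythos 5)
Your proposal has the same high-level shape as the paper's proof (case analysis on $f(x) \in \{3,4\}$, and exhibiting a single ascending vertex of the link), but it rests on a sign error for $\chi_0$, and this error propagates into a spurious puzzle about $(\chi,f)$ versus $(\chi,-f)$.

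With the paper's conventions, $\chi_0([T_-/T_+]) = L(T_+) - L(T_-)$. Take the identity $[\id_1/\id_1]$ (one foot, $\chi_0 = 0$) and split its foot: the product $[\id_1/\id_1]\cdot[\Lambda/\id_2] = [\Lambda/\id_2]$ has $\chi_0 = L(\id_2) - L(\Lambda) = -1$. So splitting the \emph{first} foot \emph{decreases} $\chi_0$ by $1$; symmetrically, merging the first two feet \emph{increases} $\chi_0$ by $1$ (the paper uses this same fact in Section~\ref{sec:long_interval}: ``Since $a<0$ \ldots, merging the first and second feet decreases $\chi$''). You have both signs reversed: you claim splitting the first foot ``adds $a$'' to $\chi$, and that a merge involving the first foot changes $\chi$ ``by $-a$.'' As a consequence, in your $f(x)=3$ case the purported ascending vertex $\{v_1\}$ is in fact \emph{descending}; the genuine ascending vertices in range are $\{v_2\}$ and $\{v_3\}$, which are $f$-ascending after a $\chi$-tie. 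And in your $f(x)=4$ case the genuine ascending vertex in range is precisely $\{e_{1,2}\}$, because it strictly increases $\chi$; the other two merges are $\chi$-ties and hence descending since they drop $f$. Once this is fixed there is no tension at all with the lemma's use of $(\chi,f)$: the $4$-footed case needs no secondary tiebreak because $e_{1,2}$ already wins on $\chi$, so the speculation that ``the intended secondary function for this interval is chosen so that a $4$-footed vertex sees its $3$-footed neighbors as higher'' is unnecessary and incorrect. In short: the structure of the argument is right, but the sign of $\chi_0$ on first-foot splits/merges must be flipped, after which both cases produce a nonempty ascending link for the Morse function $(\chi,f)$ exactly as stated.
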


\begin{proof}
 Write $Y \defeq X_{3 \le f \le 4}$. Up to scaling and symmetry we may assume that $\chi = \chi_0$. Given a vertex $v$ on $n = f(v)$ feet, a simplex in $\lk_X v \cong \gmatch(L_n)$ is ascending if and only if its vertices are among $e_{1,2}$ ($\chi$-ascending) and $v_2, \ldots, v_n$ ($f$-ascending). The number of feet imposes restrictions on which of these simplices actually lie in $\lk_Y x$. But any ascending link contains $e_{1,2}$ or $v_2$ and thus is non-empty.
\end{proof}

\begin{proposition}\label{prop:neg_zero_connectivity}
 If $a > 0$ and $b = 0$ or $b > 0$ and $a = 0$, then $[\chi] \not\in \Sigma^1(F)$.
\end{proposition}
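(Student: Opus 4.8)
The plan is to invoke Observation~\ref{obs:weak_bottleneck_trick} with $m=1$. We have already arranged that $Y\defeq X_{3\le f\le 4}$ is $0$-connected (i.e.\ connected and nonempty) and $F$-cocompact, that $(\chi,f)$ is a Morse function on $Y$, and by Lemma~\ref{lem:asc_links_3_4} that all ascending vertex links in $Y$ are nonempty, hence $(-1)$-connected, i.e.\ $(m-2)$-connected. So Observation~\ref{obs:weak_bottleneck_trick} applies and tells us that the filtration $(Y_{t\le\chi})_{t\in\R}$ is essentially connected if and only if $Y_{p\le\chi}$ is connected for some $p$. Therefore, to prove $[\chi]\notin\Sigma^1(F)$, it suffices to exhibit, for every $p$, two vertices of $Y_{p\le\chi}$ that lie in different components of $Y_{p\le\chi}$.

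Up to positive scaling and the symmetry swapping $\chi_0\leftrightarrow\chi_1$ (which is realized by the obvious flip of split-merge diagrams and is an isometry of $X$ preserving $f$), I would assume $\chi=\chi_0$. The strategy for the disconnectedness is to track what $\chi_0$ does along edges of $Y$. An edge of $Y$ joins a vertex $x$ with a vertex obtained by adding a single caret (a split) to one foot of $x$, or the reverse. By the formula $\chi_0([E_-/E_+])=L(E_+)-L(E_-)$, such a move changes $\chi_0$ by $0$ unless it splits (or merges) the \emph{first} foot, in which case it changes $\chi_0$ by $\pm 1$. The key point is that an edge on which $\chi_0$ is \emph{strictly} increasing, read in the direction of increasing $\chi_0$, must be one that splits the first foot; and for such an edge the initial vertex $x$ (the one with smaller $\chi_0$) has fewer feet than its neighbor, so $f(x)=3$ and the neighbor has $f=4$. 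Consequently: if you are at a vertex $x$ with $f(x)=4$ and you want to move to a neighbor with strictly larger $\chi_0$, there is no such neighbor at all (you can only merge down, which would require the neighbor to have $f=3$, but the first-foot merge \emph{decreases} $\chi_0$; any merge not involving the first foot keeps $\chi_0$ the same). More usefully, if $x$ has $f(x)=3$, the \emph{only} neighbor with strictly larger $\chi_0$ is the one obtained by splitting the first foot.

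From this local analysis I would extract the bottleneck. Fix a large $p$. Consider two vertices $x,y\in Y_{p\le\chi}$ with $\chi_0(x)=\chi_0(y)=p$ that "look different at the left end" — concretely, pick $x$ whose reduced diagram has a deep left spine in $E_-$ (so $L(E_-(x))$ is large, forcing $L(E_+(x))=L(E_-(x))+p$ to be large, i.e.\ the first foot of $x$ sits under many merges) while $y$ is chosen with $L(E_-(y))=0$. Any edge path in $Y_{p\le\chi}$ from $x$ to $y$ can only change $\chi_0$ by $0$ or $\pm 1$ per step, stays $\ge p$, and can only decrease $L$ of the "merge side" in ways constrained by the above; walking through the path and watching the quantity "number of merges above the first foot" together with the value of $\chi_0$ shows the path cannot reach $y$ without at some point dropping $\chi_0$ below $p$. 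The cleanest way to package this is probably: show the full subcomplex of $Y_{p\le\chi}$ spanned by vertices $x$ with $\chi_0(x)=p$ and $f(x)=4$ is a union of components each of which, to be joined to the rest, forces a descent — i.e.\ identify an explicit "cut" vertex set whose removal disconnects, and check directly that the two chosen vertices lie on opposite sides. I expect the main obstacle to be exactly this bookkeeping: phrasing the disconnectedness of $Y_{p\le\chi}$ cleanly rather than through an ad hoc path-chasing argument, and making sure the restriction to $3\le f\le 4$ (which kills the easy ascending directions but also removes potential "detour" vertices) is used correctly on both counts. Once the two separated vertices are produced for every $p$, Observation~\ref{obs:weak_bottleneck_trick} finishes the proof that $[\chi]\notin\Sigma^1(F)$.
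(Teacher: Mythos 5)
Your top-level reduction matches the paper exactly: restrict to $Y = X_{3 \le f \le 4}$, use Lemma~\ref{lem:asc_links_3_4} and Observation~\ref{obs:weak_bottleneck_trick} to reduce to showing some $Y_{p \le \chi_0}$ is disconnected, then use $F$-transitivity to take $p=0$. But the proposal has a genuine gap where the work actually is. First, the local analysis has a sign error. With $\chi_0([E_-/E_+]) = L(E_+) - L(E_-)$, splitting the first foot \emph{decreases} $\chi_0$ by $1$ (the split either cancels the root merge of the first tree of $E_+$, dropping $L(E_+)$ by one, or — when that tree is trivial — slides up into $E_-$, raising $L(E_-)$ by one), and correspondingly merging the first two feet \emph{increases} $\chi_0$. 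So your claims that a vertex with $f=4$ has no neighbor of strictly larger $\chi_0$, and that from $f=3$ the unique $\chi_0$-increasing move is a first-foot split, are both backwards; compare the paper's proof of Lemma~\ref{lem:asc_links_3_4}, where $e_{1,2}$ (a merge) is identified as the $\chi$-ascending direction.

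Second, and more importantly, the disconnectedness itself is never established — you say as much (``I expect the main obstacle to be exactly this bookkeeping''). The paper's key device, which you gesture at but do not nail down, is a \emph{locally constant} integer on $Y_{0 \le \chi_0}$: for $x$ with reduced representative $(T/E)$, set $L(x) \defeq L(T)$, the number of carets above the leftmost leaf of the \emph{split} tree $T = E_-$. On $Y_{0 \le \chi_0}$ one always has $L(E) \ge L(T) \ge 1$ (the lower bound because $T$ has at least $3$ leaves; the upper from $\chi_0 \ge 0$), so every first-foot split cancels a merge and leaves $T$ untouched, and every other edge move also leaves $T$ untouched; hence $L$ is constant on components. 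Since $L$ clearly takes more than one value on $Y_{0 \le \chi_0}$, there are at least two components. Your suggested quantity (``number of merges above the first foot,'' essentially $L(E_+)$) is precisely \emph{not} locally constant — it jumps under first-foot moves — and your proposed comparison vertex $y$ with $L(E_-(y))=0$ does not exist, since $E_-$ is a binary tree with at least three leaves. Replacing the proposed ``cut-set / path-chasing'' with the local-constancy of $L(T)$ is the missing step.
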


\begin{proof}
 We treat the case $a > 0$, $b = 0$; the other case follows from symmetry. By positive scaling we can assume $a=1$, so $\chi = \chi_0$. We want to show that $(X_{3 \le f \le 4}^{t \le \chi_0})_{t\in\R}$ is not essentially connected. By Observation~\ref{obs:weak_bottleneck_trick} and Lemma~\ref{lem:asc_links_3_4} it suffices to show that $X_{3 \le f \le 4}^{t \le \chi_0}$ is not connected for any $t \in \Z$. Since $F$ acts transitively on these sets, this is equivalent to proving that $X_{3 \le f \le 4}^{0 \le \chi_0}$ is not connected.
 
 Let $x$ be a vertex in $X$, and let $(T/E)$ be its unique reduced representative diagram. Define $L(x)\defeq L(T)$. Recall that $L(T)$ is the number of carets above the leftmost leaf of $T$. Let $x'$ be any neighbor of $x$, say with reduced diagram $(T'/E')$, so $L(x')\defeq L(T')$. Note that $L(x')\in\{L(x)-1,L(x),L(x)+1\}$. We claim that if these neighboring vertices $x,x'$ are in $X_{3 \le f \le 4}^{0 \le \chi_0}$, then $L(x)=L(x')$.
 
 First note that $L(E)\ge L(x)$ because $\chi_0(x)\ge0$. Also, $L(x)\ge1$ since $f(x)\ge3$, so also $L(E)\ge1$. Now if $x'$ is obtained from $x$ by adding a merge to the feet of $x$, then $T$ and $T'$ have the same number of carets above the leftmost leaf, so $L(x)=L(x')$. Alternately, if $x'$ is obtained from $x$ by adding a split, since the leftmost leaf of $E$ has at least one caret above it, we know that, again, adding this split cannot change the number of splits  above the leftmost leaf of $T$. Hence in any case $L(x')=L(x)$ for all neighbors $x'$ of $x$.
 
 This shows that $L$ is constant along the vertices of any connected component of $X_{3 \le f \le 4}^{0 \le \chi_0}$. Since $L$ does take different values, there must be more than one component.
\end{proof}


\section{The short interval}\label{sec:short_interval}

Let $\chi=a\chi_0+b\chi_1$ be a non-trivial character of $F$, with $a > 0$ and $b > 0$. The corresponding part of $S(F)=S^1$ was termed the ``short interval'' in \cite{bieri10}. In this section we show that $[\chi]\in\Sigma^1(F)\setminus \Sigma^2(F)$. Consider the Morse function $(\chi,f)\colon X_{4 \le f \le 7} \to \R\times\R$. By Lemma~\ref{lem:sf_interval_connectivity} and Observation~\ref{obs:cocompact}, $X_{4 \le f \le 7}$ is simply connected and $F$-cocompact. Hence by Corollary~\ref{cor:changing_filtrations} it suffices to show that the filtration $(X_{4 \le f \le 7}^{t \le \chi})_{t\in\R}$ is essentially connected but not essentially simply connected. We would like to apply Observation~\ref{obs:weak_bottleneck_trick}, for which we need the following:

\begin{lemma}\label{lem:asc_links_4_7}
 In $X_{4 \le f \le 7}$ all $(\chi, f)$-ascending links are connected.
\end{lemma}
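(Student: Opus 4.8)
The plan is to model the ascending link of a vertex $x$ with $f(x) = n \in \{4,5,6,7\}$ inside $\gmatch(L_n)$, exactly as in the proof of Lemma~\ref{lem:asc_links_3_4}. Since $a > 0$ and $b > 0$, a split at the $i$th foot is $\chi$-ascending precisely when $i = 1$ or $i = n$ (it strictly increases $\chi$), and a split at an interior foot preserves $\chi$ but decreases $-f$ — wait, here the secondary function is $+f$, so an interior split is $f$-ascending. Thus, reading left to right, the vertices of $L \defeq \lk^{(\chi,f)\uparrow}_{X_{4\le f\le 7}} x$ available are: $v_2, \dots, v_{n-1}$ (interior splits, $f$-ascending), $v_1$ and $v_n$ (the two outer splits, $\chi$-ascending), and the edges $e_{1,2}$ and $e_{n-1,n}$ (the outer merges, which are $\chi$-ascending since $a,b>0$ means a merge at the first or last foot strictly increases $\chi$); interior merges decrease $\chi$ and so are excluded. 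So $L$ sits inside $\gmatch(L_n)$ as the full subcomplex on the vertex set $\{v_1,\dots,v_n\} \cup \{e_{1,2}, e_{n-1,n}\}$, further restricted by the constraint that any simplex $x\Psi$ we use must have a cube living in $X_{4\le f\le 7}$, i.e., the total foot count $n + (\#\text{splits}) + (\#\text{merges})$ must stay $\le 7$.

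First I would handle the foot-count bookkeeping. A simplex of $L$ is a general matching using some splits (each $+1$ foot) and at most the two outer merges (each $+1$ foot), with all chosen cells pairwise disjoint in $L_n$. The cube it spans goes from $x$ (at $n$ feet) up to $x$ with all those elementary moves applied, which has $n + k$ feet where $k$ is the number of cells in the matching. So the constraint ``cube lies in $X_{4\le f\le 7}$'' is simply $n + k \le 7$, i.e., the matching has at most $7 - n$ cells. For $n = 7$ this forces $L = \emptyset$-on-higher-simplices — actually $k \le 0$, so $L$ is empty; that can't be right, so I would double-check: for $n = 7$, $7 - n = 0$, meaning no moves are allowed, hence $L$ is empty, contradicting ``connected.'' This signals that the right reading is that $X_{4\le f\le 7}$ has vertices with $f \le 7$, and a cube $x\Psi$ lies in it iff all its vertices do, so the \emph{top} vertex has $\le 7$ feet: same conclusion. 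So for $n=7$ the ascending link is genuinely empty — and the empty set is $(-1)$-connected but not $0$-connected, so the statement as phrased would be false unless ``connected'' is interpreted to allow empty, or unless I've mis-identified which moves are ascending. I would therefore revisit: perhaps for the short interval the relevant secondary function makes the top of the range behave differently, or the convention is that $(\chi,f)$-ascending links being ``connected'' includes empty and non-empty-disconnected is what's excluded; more likely the intended claim is ``connected or empty.'' I would state and prove: $L$ is empty when $n = 7$, and connected (indeed a cone) when $n \in \{4,5,6\}$.

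The main case is $n = 4,5,6$. Here I claim $v_1 \in L$, and moreover $\{v_1\}$ is a cone point: for any matching $\mu \in L$, the set $\mu \cup \{v_1\}$ is still a valid general matching (the only cells meeting $v_1$ are $v_1$ itself and $e_{1,2}$, and $e_{1,2}$ would only be forbidden if $\mu$ already contains $e_{1,2}$, in which case $v_1 \notin \mu$ is automatic but then $\mu \cup \{v_1\}$ isn't a matching) — so I must instead cone off carefully, separating whether $e_{1,2} \in \mu$ or not. The cleaner route: let $Z$ be the full subcomplex of $L$ on $\{v_2,\dots,v_{n-1},v_n,e_{n-1,n}\}$ (everything not touching position $1$). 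Then $L$ is the union of $\st_L(v_1)$ and $\st_L(e_{1,2})$, both cones, and their intersection is $Z$; if I can show $Z$ is non-empty it follows that $L$ is connected (two cones glued along a non-empty complex). And $Z$ is non-empty because it contains $\{v_2\}$, provided $n + 1 \le 7$, i.e., $n \le 6$ — which is exactly our range. The foot-count constraint on $Z$ and on the two stars needs to be checked to confirm $\{v_2\}$, $\{v_1\}$, $\{e_{1,2}\}$ are actually present (each uses only one move, so $n+1 \le 7$ suffices). The hard part of the writeup will be getting the foot-count constraint stated cleanly and making sure the ``$L$ is a union of two cones along a non-empty intersection'' argument is airtight given that the two stars need not themselves be full subcomplexes — but since ascending links are full (Observation~\ref{obs:ascending_links_full}) and stars in a full subcomplex of a simplicial complex behave well, this should go through, with the $n=7$ degenerate case flagged explicitly.
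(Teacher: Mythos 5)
Your plan goes off the rails at the very first step, when you determine which directions in $\gmatch(L_n)$ are $(\chi,f)$-ascending. You claim that for $a,b>0$, splitting the first (resp.\ last) foot is $\chi$-ascending \emph{and} that merging feet $1,2$ (resp.\ $n-1,n$) is $\chi$-ascending. That cannot be: the merge undoes the split, so they must change $\chi$ in opposite directions. The actual situation (used consistently in the paper, e.g.\ in the proof of the long-interval lemma) is that merging feet $1,2$ \emph{increases} $\chi_0$ by $1$ and splitting foot $1$ \emph{decreases} it, so with $a>0$ the merge $e_{1,2}$ is $\chi$-ascending and the split $v_1$ is $\chi$-descending. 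Hence the ascending vertex set is $\{v_2,\dots,v_{n-1},e_{1,2},e_{n-1,n}\}$, \emph{not} your $\{v_1,\dots,v_n,e_{1,2},e_{n-1,n}\}$. In particular $v_1$ and $v_n$ are simply absent, and your proposed cone point $v_1$ (and the ``union of two cones $\st_L(v_1)\cup\st_L(e_{1,2})$'' device) is unavailable.

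The second error is the bookkeeping on feet: you treat a merge as contributing $+1$ to the foot count of the top vertex of the cube, but a merge \emph{decreases} the number of feet. The cube $x\Psi$ has top vertex with $n+(\#\text{splits in }\Psi)$ feet and bottom vertex with $n-(\#\text{merges in }\Psi)$ feet, so the constraint from $X_{4\le f\le 7}$ is $n+\#\text{splits}\le 7$ and $n-\#\text{merges}\ge 4$. Your mistake here is what produced the spurious ``$L$ is empty when $n=7$'' problem and the subsequent hand-wringing about whether the lemma is even true. With the correct accounting, at $n=7$ no splits are allowed but both merges $e_{1,2}$ and $e_{6,7}$ are (down to $6$ feet), and they are disjoint so the edge $\{e_{1,2},e_{6,7}\}$ is also present (down to $5$ feet); the ascending link is a single edge, hence connected. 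The paper's proof, once it has the correct ascending vertex set, is then a short case check: for $n\in\{4,5\}$ the interior splits $v_2,\dots,v_{n-1}$ are pairwise disjoint so pairwise joined, and each of $e_{1,2},e_{n-1,n}$ is disjoint from one of them; for $n\in\{6,7\}$ the two merges $e_{1,2}$ and $e_{n-1,n}$ are disjoint so joined by an edge, and each admissible $v_i$ is disjoint from at least one of them. You would need to redo your argument on the correct vertex set with the correct foot bounds before any cone or nerve-style trick could be assessed.
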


\begin{proof}
 Given a vertex $x$ with $n = f(x)$ feet, a simplex in $\lk_X x \cong \gmatch(L_n)$ is ascending if and only if its vertices all lie in $\{e_{1,2}, e_{n-1,n}, v_2,\dots,v_{n-1}\}$. The number of feet imposes restrictions on which of these simplices actually lie in $\lk_{X_{4 \le f \le 7}} x$.
 
 We claim that all ascending links are connected. For $n\in\{4,5\}$, $v_2$ through $v_{n-1}$ are pairwise connected by an edge, and each of $e_{1,2}$ and $e_{n-1,n}$ is connected to at least one of them. For $n\in\{6,7\}$ the $0$-simplices $e_{1,2}$ and $e_{n-1,n}$ are connected by an edge, and each $v_i$ is connected to at least one of them. Thus in either case the ascending link is connected.
\end{proof}

To show that superlevel sets are not simply connected we will use the following supplement to the Nerve Lemma.

\begin{lemma}\label{lem:alternative_nerve_lemma}
 Let a simplicial complex $Z$ be covered by connected subcomplexes $(Z_i)_{i \in I}$. Suppose the nerve $\nerve(\{Z_i\}_{i \in I})$ is connected but not simply connected. Then $Z$ is connected but not simply connected.
\end{lemma}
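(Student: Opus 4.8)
The plan is to combine the classical Nerve Lemma in low dimensions with a $\pi_1$-surjectivity argument. The Nerve Lemma guarantees that if all the $Z_i$ are connected (which is assumed), then $Z$ is connected and there is a map from $Z$ to $\nerve(\{Z_i\}_{i\in I})$ (or, more precisely, a natural map going the other way from the nerve realized as a homotopy colimit) that is a bijection on $\pi_0$. So connectivity of $Z$ is immediate. The real content is the failure of simple connectivity, and for this I would construct a surjection $\pi_1(Z) \twoheadrightarrow \pi_1(\nerve)$, or rather a retraction-type map, so that a non-trivial loop in the nerve forces a non-trivial loop in $Z$.

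First I would set up the standard ``graph of the cover'' picture. Since the nerve is connected, choose a spanning tree in the $1$-skeleton of $\nerve$, i.e.\ pick for each pair $i,j$ with $Z_i \cap Z_j \ne \emptyset$ in the tree a vertex $p_{ij} \in Z_i \cap Z_j$, and for each $Z_i$ a basepoint; connectedness of each $Z_i$ lets us choose paths inside $Z_i$ between its basepoint and the relevant $p_{ij}$. Any edge of $\nerve$ not in the spanning tree corresponds to a generator of $\pi_1(\nerve)$ (which is free on the non-tree edges, since the nerve may as well be replaced by its $2$-skeleton and we only care about $\pi_1$; in fact for the statement it suffices to know $\pi_1(\nerve)\ne 1$). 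Second, I would define a map $Z \to \nerve$ up to homotopy: concretely, one checks that $Z$ is homotopy equivalent to the homotopy colimit of the diagram of the $Z_i$ and their intersections, and projecting the homotopy colimit to the nerve (the homotopy colimit of the point-diagram over the same poset) gives a natural map $\phi\colon Z \to \nerve$. Standard nerve-type arguments (e.g.\ \cite[Section~4.G]{geoghegan08}-style, or Björner's nerve theorem) show $\phi$ is a $\pi_0$- and $\pi_1$-surjection whenever each $Z_i$ is connected, because a loop in the nerve can be lifted edge-by-edge to a concatenation of the chosen paths inside the $Z_i$. Third, and this is the crux, I would verify that $\phi_*\colon \pi_1(Z) \to \pi_1(\nerve)$ is surjective: given a loop $\gamma$ in $\nerve$ based at a vertex $i_0$, write it as an edge path $i_0, i_1, \dots, i_k = i_0$; lift it to the path in $Z$ obtained by concatenating, for each consecutive pair $i_{t}, i_{t+1}$, a path in $Z_{i_t}$ from the basepoint to $p_{i_t i_{t+1}}$ followed by a path in $Z_{i_{t+1}}$ from $p_{i_t i_{t+1}}$ to the basepoint of $Z_{i_{t+1}}$; the resulting loop in $Z$ maps under $\phi$ to a loop homotopic to $\gamma$. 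Since $\pi_1(\nerve)\ne 1$, some such $\gamma$ is essential, hence its lift is essential in $Z$, so $\pi_1(Z)\ne 1$.

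The main obstacle is making precise that $\phi_*$ is well-defined and surjective without doing the full homotopy-colimit machinery; the cleanest route is probably to invoke the version of the Nerve Lemma that keeps track of the comparison map (for instance, that the map from $Z$ to $\nerve$ induced by a partition-of-unity-style construction is $n$-connected when all $(n{+}1)$-fold intersections are $(n{-}\,\cdot\,)$-connected, specialized to $n=1$ with only the hypothesis that the $Z_i$ themselves are connected, which already yields $\pi_1$-surjectivity). Alternatively, one can give a direct combinatorial argument: subdivide $Z$ so that each loop is an edge-loop, push it off the intersections into the ``interiors'' of the $Z_i$ using their connectedness, record the sequence of pieces visited to get a loop in $\nerve$, and check that this assignment is inverse-up-to-homotopy to the lifting described above. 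I would present the proof in the latter combinatorial style to keep it self-contained, since the rest of the paper is self-contained, flagging that only $\pi_1$-surjectivity (not injectivity) of $\phi_*$ is needed.
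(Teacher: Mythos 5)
Your route is different from the paper's and, in outline, correct; the comparison is instructive. The paper works ``upward'': it fixes basepoints $z_i \in Z_i$ and connecting paths $p_{i,j}$, lifts an edge loop $\gamma$ in the nerve $N$ to a loop $c$ in $Z$, and then proves directly, by induction on the number of triangles in a simplicial disk filling $c$, that if $c$ bounds in $Z$ then $\gamma$ bounds in $N$. (Throughout, the paper keeps $\gamma$ and $c$ ``linked'' and reduces both simultaneously.) This is a purely combinatorial disk-filling argument with no continuous map $Z \to N$ ever constructed. You go ``downward'': you build a continuous comparison map $\phi \colon Z \to N$ and argue $\phi_*\colon \pi_1(Z) \to \pi_1(N)$ is surjective, which equally forces $\pi_1(Z) \neq 1$. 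Your route has the conceptual advantage that $\phi_*$ is automatically a well-defined homomorphism because it comes from a map of spaces, whereas the paper's $\varphi$ needs a word of justification (or, as the paper's argument in fact does, one can avoid needing $\varphi$ to be a homomorphism at all and just argue on loops). The cost is that $\phi$ itself needs care: the obvious vertex-to-barycenter recipe $v \mapsto b(\{i : v \in Z_i\})$ is \emph{not} simplicial or even continuous in general, since a simplex of $Z$ need not have all its vertex-labels spanning a common simplex of $N$; you must pass to an open thickening and a partition of unity (or to the homotopy colimit picture), and then the homotopy $\phi(c) \simeq \gamma$ uses that $\phi$ carries $Z_i$ into the open star of $i$, followed by a star-by-star straightening. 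You flag this yourself as the main obstacle and propose a combinatorial alternative (``record the sequence of pieces visited''), but that alternative, once fleshed out, is essentially dual to the paper's linked-reduction argument and requires the same amount of bookkeeping about choices and homotopy invariance. So: a valid alternative strategy, roughly comparable in difficulty once the details are filled in, trading the paper's self-contained combinatorics for some standard but non-trivial nerve-theorem infrastructure.
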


\begin{proof}
 Let $N \defeq \nerve(\{Z_i\}_{i \in I})$. For each $i \in I$ pick a vertex $z_i \in Z_i$. For any edge $\{i,j\}$ in $N$, pick an edge path $p_{i,j}$ from $z_i$ to $z_j$ in $Z_i \cup Z_j$ (this is possible because $Z_i$ and $Z_j$ are connected and meet non-trivially). This induces a homomorphism $\varphi \colon \pi_1(N,i) \to \pi_1(Z,z_i)$ where $i \in I$ is arbitrary. We want to see that $\varphi$ is injective.
 
 To do so, let $\{i_0,i_1\}, \ldots , \{i_{n-1},i_n\}, \{i_n,i_0\}$ be an edge cycle $\gamma$ in $N$ and let $v_0, \ldots, v_n$ be any sequence of vertices $v_j \in Z_{i_j}$ such that the edge $\{v_j,v_{j+1}\}$ exists in $Z_{i_j} \cup Z_{i_{j+1}}$ (with subscripts taken mod $n+1$), so these form an edge cycle $c$ in $Z$. Here we allow for degenerate edges in either of these edge cycles, i.e., each edge may actually be a vertex. The condition that $\{v_j,v_{j+1}\} \subseteq Z_{i_j} \cup Z_{i_{j+1}}$ for all $j$ will be referred to as $\gamma$ and $c$ being \emph{linked}. We assume that $c$ can be filled by a triangulated disk in $Z$ and want to show that $\gamma$ is nullhomotopic in $N$.
 
 The proof is by induction on the number of triangles in a (minimal) such filling disc. Let $t \subseteq Z$ be a triangle of a filling disk with vertices $v_j$, $v_{j+1}$, and $w$, say. Let $k$ be such that $Z_k$ contains $t$. Then we obtain a path homotopic to $c$ by replacing the edge $\{v_j,v_{j+1}\}$ by the union of edges $\{v_j,w\}$ and $\{w,v_{j+1}\}$, and we obtain a path homotopic to $\gamma$ by replacing the edge $\{i_j,i_{j+1}\}$ by the edges $\{i_j,k\}$ and $\{k,i_{j+1}\}$. Note that $\gamma$ and $c$ remain linked after this process, since $\{v_j,w\}$ and $\{w,v_{j+1}\}$ lie in $Z_k$. After finitely many such reductions, the filling disc for $c$ contains no triangles.
 
 To reduce $c$ to a trivial cycle, the only remaining reductions needed are removing two forms of stuttering: the one where $v_j = v_{j+1}$ and the one where $v_j = v_{j+2}$. In the second case, we may remove $v_{j+1}$ from $c$ to obtain a homotopic path and we may similarly remove $i_{j+1}$ from $\gamma$ to obtain a (linked) homotopic path (note that $i_j$, $i_{j+1}$ and $i_{j+2}$ span a triangle because $Z_{i_j}\cap Z_{i_{j+1}}\cap Z_{i_{j+2}}$ contains either $v_j$ or $v_{j+1}$, thanks to $\gamma$ and $c$ being linked). This reduces the second kind of stuttering to the first. The first kind of stuttering can be resolved if $Z_{j+2}$ contains $v_{j+1}$ by just deleting $v_{j+1}$ and $i_{j+1}$ from their respective paths. Otherwise ($Z_{j+1}$ contains $v_{j+2}$ and) the stuttering can be shifted by replacing $v_{j+1}$ by $v_{j+2}$ in $c$. Under any of these moves $\gamma$ and $c$ remain linked. After finitely many such reductions both $c$ and $\gamma$ will be trivial paths.
 
 It follows that $\varphi$ is injective: Let $\gamma$ be a cycle in $N$ and let $c$ be the corresponding cycle in $Z$ made up of the relevant paths $p_{k,\ell}$, as in the definition of $\varphi$. Each $p_{k,\ell}$ has a linked path consisting of $\{k,\ell\}$ and (possibly many occurrences of) $\{k\}$ and $\{\ell\}$. Thus $\gamma$ and $c$ are linked after sufficiently many degenerate edges have been added to $\gamma$. By the above argument, if $\varphi([\gamma])=[c] \in \pi_1(Z,z_i)$ is trivial then so is $[\gamma] \in \pi_1(N,i)$.
\end{proof}

\begin{proposition}\label{prop:neg_one_connectivity}
 If $a > 0$ and $b > 0$  then $[\chi] \in \Sigma^1(F) \setminus \Sigma^2(F)$.
\end{proposition}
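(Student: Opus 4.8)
The plan is to run the Morse function $(\chi,f)\colon X_{4 \le f \le 7}\to\R\times\R$ (Proposition~\ref{prop:char_morse}) and, via Corollary~\ref{cor:changing_filtrations}, reduce both halves of the statement to properties of the filtration $(X_{4 \le f \le 7}^{t \le \chi})_{t\in\R}$. For $[\chi]\in\Sigma^1(F)$: Lemma~\ref{lem:asc_links_4_7} says all $(\chi,f)$-ascending links in $X_{4 \le f \le 7}$ are connected, so the Morse Lemma, applied with $p=-\infty$, $q=t$, $r=\infty$, makes the pair $(X_{4 \le f \le 7},X_{4 \le f \le 7}^{t \le \chi})$ $1$-connected for every $t$; since $X_{4 \le f \le 7}$ is connected (Lemma~\ref{lem:sf_interval_connectivity}), each superlevel set $X_{4 \le f \le 7}^{t \le \chi}$ is connected, hence the filtration is essentially connected and $[\chi]\in\Sigma^1(F)$.

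For $[\chi]\notin\Sigma^2(F)$ I would invoke Observation~\ref{obs:weak_bottleneck_trick} with $m=2$ (valid: $X_{4 \le f \le 7}$ is $1$-connected by Lemma~\ref{lem:sf_interval_connectivity} and all ascending links are connected by Lemma~\ref{lem:asc_links_4_7}): it suffices to show that \emph{no} set $X_{4 \le f \le 7}^{t \le \chi}$ is simply connected. This reduces to a single set. By $F$-equivariance, $g\cdot X_{4 \le f \le 7}^{s \le \chi}=X_{4 \le f \le 7}^{s+\chi(g) \le \chi}$, and $\chi(F)$ is a nontrivial, hence unbounded, subgroup of $\R$; given $t$, choose $g\in F$ with $\chi(g)\le t$. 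The Morse Lemma applied with $p=\chi(g)$, $q=t$, $r=\infty$ makes the pair $(X_{4 \le f \le 7}^{\chi(g) \le \chi},X_{4 \le f \le 7}^{t \le \chi})$ $1$-connected, so $\pi_1(X_{4 \le f \le 7}^{t \le \chi})$ surjects onto $\pi_1(X_{4 \le f \le 7}^{\chi(g) \le \chi})\cong\pi_1(X_{4 \le f \le 7}^{0 \le \chi})$. Thus it is enough to prove that $X_{4 \le f \le 7}^{0 \le \chi}$ is not simply connected.

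The plan here is a covering argument fed into Lemma~\ref{lem:alternative_nerve_lemma}. Let $A$ (resp.\ $B$) be the full subcomplex of $X_{4 \le f \le 7}^{0 \le \chi}$ on the vertices $x$ with $\chi_0(x)\ge0$ (resp.\ $\chi_1(x)\ge0$). Because $a,b>0$ and, by Proposition~\ref{prop:char_morse}, adding a caret to any foot other than the leftmost or rightmost leaves $(\chi_0,\chi_1)$ unchanged, the values of $(\chi_0,\chi_1)$ on any cube form a block $\{c_0,c_0'\}\times\{c_1,c_1'\}$ in $\Z^2$; since all corners must satisfy $\chi=a\chi_0+b\chi_1\ge0$, one cannot have both $\min(c_0,c_0')<0$ and $\min(c_1,c_1')<0$, so every cube lies in $A$ or in $B$, i.e.\ $A\cup B=X_{4 \le f \le 7}^{0 \le \chi}$. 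As in the proof of Proposition~\ref{prop:neg_zero_connectivity}, on $A$ the function $L$ (number of carets above the leftmost leaf of the top tree) is locally constant, so the components of $A$ are unions of level sets of $L$; since $L$ takes at least the values $1$ and $2$ on $A$, the subcomplex $A$ is disconnected, and likewise $B$ via the mirror function $R$. Now cover $X_{4 \le f \le 7}^{0 \le \chi}$ by the family of all connected components of $A$ together with all connected components of $B$. Each member is connected; a point lies in at most one component of $A$ and at most one of $B$, so the nerve is a bipartite graph, and it is connected since $X_{4 \le f \le 7}^{0 \le \chi}$ is.

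By Lemma~\ref{lem:alternative_nerve_lemma} it remains to show this nerve is not a tree, and this is the step I expect to be the main obstacle. I would do it by exhibiting four explicit small split-merge diagrams $x_{11},x_{12},x_{21},x_{22}$, all with $\chi_0,\chi_1\ge0$ and $4\le f\le7$, such that $L(x_{1j})\ne L(x_{2j})$ and $R(x_{i1})\ne R(x_{i2})$, while $x_{i1}$ and $x_{i2}$ are joined by an edge inside $B$ and $x_{1j}$ and $x_{2j}$ by an edge inside $A$; such diagrams are available because moving inside $A$ keeps $L$ constant but may change $R$, and symmetrically for $B$. These four vertices trace a $4$-cycle through two components of $A$ and two components of $B$, so the nerve has nontrivial fundamental group. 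Lemma~\ref{lem:alternative_nerve_lemma} then gives that $X_{4 \le f \le 7}^{0 \le \chi}$ is not simply connected, hence by the reductions above no $X_{4 \le f \le 7}^{t \le \chi}$ is simply connected, and Observation~\ref{obs:weak_bottleneck_trick} shows the filtration is not essentially $1$-connected. Therefore $[\chi]\notin\Sigma^2(F)$, which together with the first paragraph yields $[\chi]\in\Sigma^1(F)\setminus\Sigma^2(F)$.
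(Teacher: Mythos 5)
The overall architecture of your proof is the same as the paper's: the same Morse function $(\chi,f)$ on $X_{4\le f\le 7}$, the same use of Observation~\ref{obs:weak_bottleneck_trick} together with Lemma~\ref{lem:asc_links_4_7} to reduce $\Sigma^1$-membership and $\Sigma^2$-non-membership to connectivity of superlevel sets, an $F$-equivariance reduction to $X_{4\le f\le 7}^{0\le\chi}$, and a cover fed into Lemma~\ref{lem:alternative_nerve_lemma} with a bipartite nerve. Your cover (components of $A=Y_{\chi_0\ge0}$ and $B=Y_{\chi_1\ge 0}$) is a mild variant of the paper's cover by components of the $L$- and $R$-level sets, and the argument that every cube lies in $A$ or $B$, that $L$ is constant on $A$-edges, and that the nerve is bipartite and connected, is sound.

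The final step, however, has a genuine gap, and it is exactly the step you flagged as the ``main obstacle.'' First, a local inconsistency: you ask for an edge inside $A$ between $x_{1j}$ and $x_{2j}$ with $L(x_{1j})\ne L(x_{2j})$, but edges inside $A$ preserve $L$ by your own locally-constant claim; likewise edges inside $B$ preserve $R$, contradicting $R(x_{i1})\ne R(x_{i2})$. Even after swapping the roles of $A$ and $B$, the plan cannot work: all four vertices are required to lie in $A\cap B$ (i.e., $\chi_0\ge 0$ \emph{and} $\chi_1\ge 0$, with $f\ge 4$), and in that regime \emph{both} $L$ and $R$ are locally constant --- the argument of Proposition~\ref{prop:neg_zero_connectivity} applies simultaneously on the left via $\chi_0\ge0$ and on the right via $\chi_1\ge0$. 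So any single edge between two of your vertices preserves both $L$ and $R$, and the would-be $4$-cycle in the nerve collapses to a single pair of cover elements. The ``such diagrams are available'' claim therefore fails; no choice of four ``small'' diagrams joined pairwise by single edges can exhibit the cycle. What is needed (and what the paper does with the vertices $x_1,\dots,x_4$ of Figure~\ref{fig:vine_splerges}) is to connect the corner vertices by \emph{paths} inside a single $A$-component (resp.\ $B$-component) that first drive $\chi_0$ (resp.\ $\chi_1$) very high, then change the right (resp.\ left) side while $\chi_1$ (resp.\ $\chi_0$) temporarily goes negative but $\chi=a\chi_0+b\chi_1$ stays nonnegative, and finally return. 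The lengths of these paths depend on the ratio $a:b$, so this is not a local construction and cannot be replaced by a handful of edges.

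Everything up to this point is correct, and your reduction to $X_{4\le f\le 7}^{0\le\chi}$ via the Morse Lemma surjection is a perfectly valid (if slightly more elaborate) version of the paper's cocompactness remark. But the cycle-in-the-nerve step needs to be redone with the path construction described above; as written, the conditions you impose on $x_{11},\dots,x_{22}$ are mutually inconsistent.
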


\begin{proof}
 We want to show that the filtration $(X_{4 \le f \le 7}^{t \le \chi})_{t\in\R}$ is essentially connected but not essentially simply connected. First note that the Morse Lemma together with Lemma~\ref{lem:asc_links_4_7} shows that in fact every $X_{4 \le f \le 7}^{t \le \chi}$ is already connected.
 
 To show that $(X_{4 \le f \le 7}^{t \le \chi})_{t\in\R}$ is not essentially simply connected, it suffices by Observation~\ref{obs:weak_bottleneck_trick} and Lemma~\ref{lem:asc_links_4_7} to show that $X_{4 \le f \le 7}^{t \le \chi}$ fails to be simply connected, for arbitrarily small $t \in \R$. Since the action of $F$ on $\R$ induced by $\chi$ is cocompact, it suffices to show that $X_{4 \le f \le 7}^{0 \le \chi}$ is not simply connected.
 
 To this end we consider certain subcomplexes of $Y\defeq X_{4 \le f \le 7}^{0 \le \chi}$. To define them, as in the proof of Proposition~\ref{prop:neg_zero_connectivity}, for a vertex $x$ of $X$ with reduced representative $(T/E)$, define $L(x)\defeq L(T)$. Similarly let $R(x)\defeq R(T)$. Now we consider the full subcomplexes $Y_{L=i}$ of $Y$ supported on vertices $x$ with $L(x) = i$. Similarly we have complexes $Y_{R=i}$. Each of these spaces decomposes into countably many connected components, which we enumerate as $Y_{B=i}^m$, $m \in \N$ (where $B=L$ or $B=R$).
 
 We claim that $Y$ is covered by the complexes $Y_{L=i}^m, m \in \N$ and $Y_{R=i}^m, m \in \N$. We have to show that every cell $x\Psi$ in $Y$ is either contained in some $Y_{L=i}$ or some $Y_{R=i}$. Take $x$ to have the maximal number of feet within the cell so that $\Psi$ involves only $\elnothing$ and $\elmerge$. Note that in $(T/E)$, which is the reduced representative of $x$, we know that $T$ is non-trivial since $f(x)\ge4$, and so $L(T)>0$ and $R(T)>0$. Hence, in order for $\chi(x) \ge 0$ to hold we need at least one of $L(E)>0$ or $R(E)>0$ to hold. If $L(E)>0$, then $L(y)=L(x)$ for all vertices $y$ of $x\Psi$ (for similar reasons as in the proof of Proposition~\ref{prop:neg_zero_connectivity}), with a similar statement for the other case, and hence $x\Psi$ is contained in some $Y_{B = i}$.
 
 Note that by definition $Y_{B = i} \cap Y_{B = j} = \emptyset$ for $i \ne j$ and $B \in \{L,R\}$. Thus the nerve $\nerve(\{Y_{B=i}^m\}_{i,m \in \N, B \in \{L,R\}})$ is a bipartite graph. That it is connected can be deduced from \cite[Lemma~1.2]{bjoerner94} since $Y$ is connected.
 
 To construct an explicit cycle, consider the vertices $x_1$ to $x_4$ in Figure~\ref{fig:vine_splerges}. Note that $x_4$ and $x_1$ lie in the same component of $Y_{L=2}$. This is because after extending the left hand side by splitting the second foot and then merging the first two feet sufficiently many times (depending on $a$ and $b$), $\chi_0$ is so high that the right side can be removed without $\chi$ dropping below zero. Let us say that this component is $Y_{L=2}^0$. For similar reasons $x_1$ and $x_2$ lie in a common component $Y_{R=2}^0$, the vertices $x_2$ and $x_3$ lie in a common component $Y_{L=3}^0$ and $x_3$ and $x_4$ lie in a common component $Y_{R=2}^0$. It follows that $\nerve(\{Y_{B=i}^m\}_{i,m \in \N, B \in \{L,R\}})$ contains the cycle $Y_{L=2}^0, Y_{R=2}^0, Y_{L=3}^0, Y_{R=3}^0$. Now Lemma~\ref{lem:alternative_nerve_lemma} tells us that $Y$ is not simply connected (it applies to simplicial complexes, so we take a simplicial subdivision).
\end{proof}

\begin{figure}[htb]
\centering
\hspace{.5cm}
\includegraphics[scale=.5]{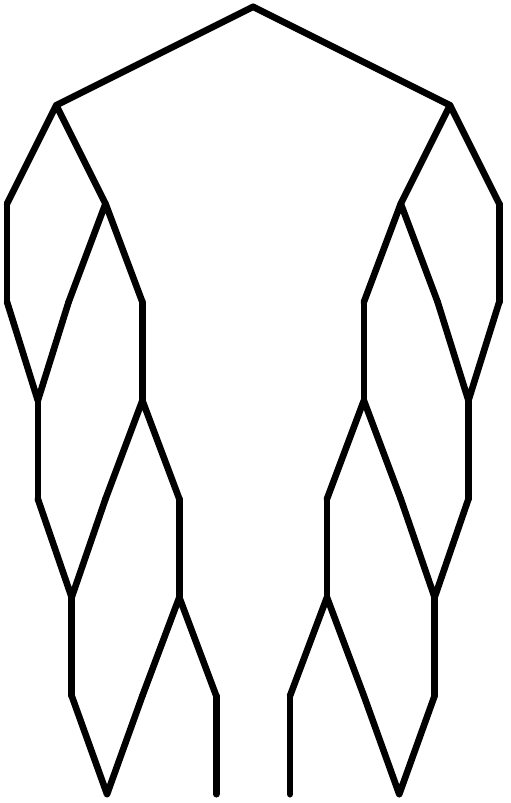}
\hfill
\includegraphics[scale=.5]{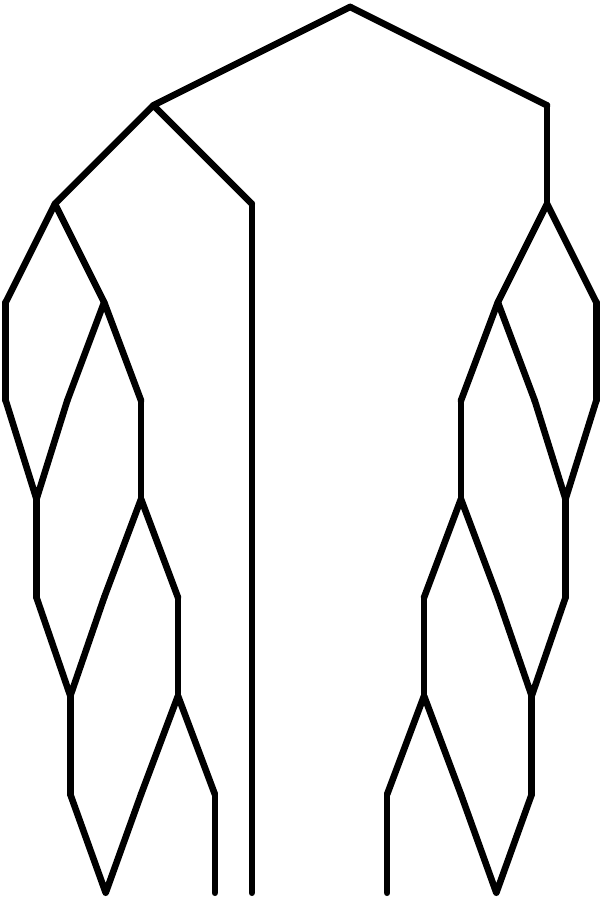}
\hfill
\includegraphics[scale=.5]{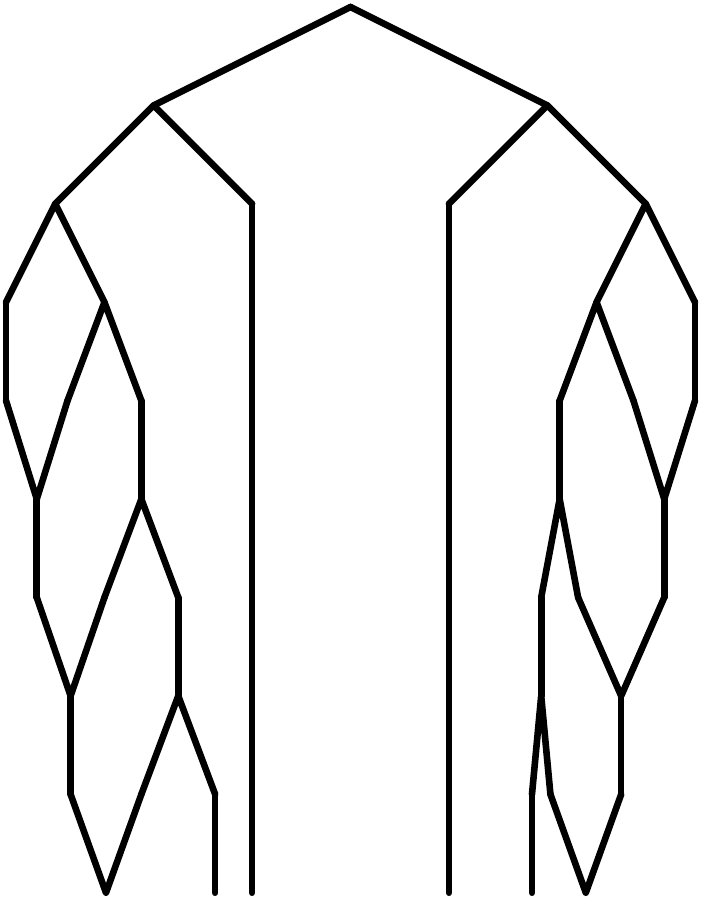}
\hfill
\includegraphics[scale=.5]{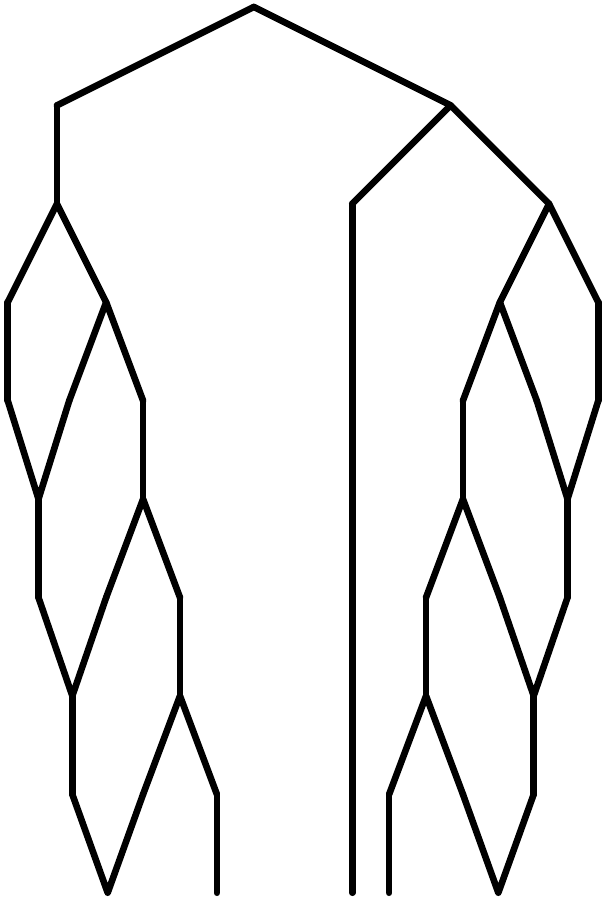}
\hspace{.5cm}{}

\hspace{1.6cm} $x_1$ \hfill $x_2$ \hfill $x_3$ \hfill $x_4$ \hspace{1.6cm}{}
\caption{The diagrams used in the proof of Proposition~\ref{prop:neg_one_connectivity}.}
\label{fig:vine_splerges}
\end{figure}

\providecommand{\bysame}{\leavevmode\hbox to3em{\hrulefill}\thinspace}
\providecommand{\MR}{\relax\ifhmode\unskip\space\fi MR }
\providecommand{\MRhref}[2]{%
  \href{http://www.ams.org/mathscinet-getitem?mr=#1}{#2}
}
\providecommand{\href}[2]{#2}

\end{document}